\documentclass[11pt]{amsart}
\usepackage{amsfonts,latexsym,amsthm,amssymb,amsmath,amscd,euscript,tikz, tikz-cd}
\usepackage[alphabetic, msc-links, bibtex-style, nobysame]{amsrefs}
\usepackage{stackengine}
\usepackage{framed}
\usepackage{xfrac}
\usepackage[makeroom]{cancel}
\usepackage{faktor}
\usepackage{braket}
\usepackage{pgf,tikz,pgfplots}
\usepgfplotslibrary{groupplots}
\pgfplotsset{compat=1.18}
\usepgfplotslibrary{fillbetween} 
\usepackage{mathrsfs}
\usetikzlibrary{arrows}
\definecolor{wrwrwr}{rgb}{0.3803921568627451,0.3803921568627451,0.3803921568627451}
\definecolor{rvwvcq}{rgb}{0.08235294117647059,0.396078431372549,0.7529411764705882}
\definecolor{mblue}{rgb}{0.2, 0.3, 0.8}
\definecolor{morange}{rgb}{1, 0.5, 0}
\definecolor{mgreen}{rgb}{0.1, 0.4, 0.2}
\definecolor{mred}{rgb}{0.5, 0, 0}
\definecolor{ForestGreen}{RGB}{34,139,34}
\usepackage{float}

\numberwithin{equation}{section}

\usepackage{lmodern}
\usepackage{alphabeta}
\usepackage{supertabular}
\usepackage{amssymb}
\usepackage{enumerate}
\usepackage{stmaryrd}
\usepackage{bbm}
\usepackage{mathtools}
\usepackage{setspace}
\usepackage{tikz,tikz-cd}
\usetikzlibrary{matrix,calc,positioning,arrows,decorations.pathreplacing,patterns,knots}

\newcommand{\la}{\langle}
\newcommand{\rg}{\rangle}

\newtheorem{theorem}{{Theorem}}[section]
\newtheorem*{theorem*}{Theorem}
\newtheorem{lemma}[theorem]{Lemma}
\newtheorem{proposition}[theorem]{Proposition}

\newtheorem*{corollary*}{Corollary}

\theoremstyle{definition}

\newtheorem{remark}{Remark}

\usepackage{lmodern,url,enumerate,mathtools}
\usepackage[hmargin = 0.95in,vmargin=0.95in]{geometry}
\usepackage{graphicx}
\usepackage{subcaption}

\usepackage{hyperref}
    \hypersetup{colorlinks=true,citecolor=ForestGreen,linkcolor = blue,urlcolor =black,linkbordercolor={1 0 0}}

\newcommand{\ve}{\varepsilon}

\newcommand{\mr}[1]{{\rm #1}}
\newcommand{\mres}{\mathbin{\vrule height 1.6ex depth 0pt width
0.13ex\vrule height 0.13ex depth 0pt width 1.3ex}}

\newcommand{\cA}{\mathcal{A}}\newcommand{\cB}{\mathcal{B}}

\newcommand{\cH}{\mathcal{H}}

\newcommand{\cK}{\mathcal{K}}\newcommand{\cL}{\mathcal{L}}
\newcommand{\cM}{\mathcal{M}}\newcommand{\cN}{\mathcal{N}}

\newcommand{\cR}{\mathcal{R}}

\newcommand{\cU}{\mathcal{U}}

\newcommand{\bE}{\mathbb{E}}
\newcommand{\bG}{\mathbb{G}}

\newcommand{\bR}{\mathbb{R}}
\newcommand{\bS}{\mathbb{S}}

\newcommand{\bZ}{\mathbb{Z}}

\newcommand{\nc}{\newcommand}
\nc{\sn}{\mr{sn}}
\nc{\cn}{\mr{cn}}
\nc{\dn}{\mr{dn}}

\nc{\on}{\operatorname}
\nc{\p}{\partial}
\nc{\ol}{\overline}
\nc{\ul}{\underline}
\nc{\pa}{\partial}

\nc{\pb}{\partial_b}
\nc{\pc}{\partial_c}
\nc{\pd}{\partial_d}
\nc{\pe}{\partial_e}
\nc{\pf}{\partial_f}
\nc{\pg}{\partial_g}
\nc{\ph}{\partial_h}
\nc{\pari}{\partial_i}
\nc{\pj}{\partial_j}
\nc{\pk}{\partial_k}
\nc{\pl}{\partial_l}
\nc{\pell}{\partial_\ell}
\nc{\parm}{\partial_m}
\nc{\pn}{\partial_n}
\nc{\po}{\partial_o}
\nc{\pp}{\partial_p}
\nc{\pq}{\partial_q}
\nc{\pr}{\partial_r}
\nc{\ps}{\partial_s}
\nc{\pt}{\partial_t}
\nc{\pu}{\partial_u}
\nc{\pv}{\partial_v}
\nc{\pw}{\partial_w}
\nc{\px}{\partial_x}
\nc{\py}{\partial_y}
\nc{\pz}{\partial_z}

\numberwithin{equation}{section}
\makeatletter
\@addtoreset{equation}{section}
\makeatother

\usepackage{mathtools}
\mathtoolsset{showonlyrefs}

\author[Benjy Firester]{Benjy Firester}
\address{{ \href{mailto:benjyfir@mit.edu}{benjyfir@mit.edu}} \hfill Department of Mathematics, MIT}
\author[Raphael Tsiamis]{Raphael Tsiamis}
\address{
{\href{mailto:r.tsiamis@columbia.edu}{r.tsiamis@columbia.edu}} \hfill Department of Mathematics, Columbia University}

\title[The anisotropic Michael-Simon inequality for surfaces in every codimension]{The anisotropic Michael-Simon inequality for surfaces in every codimension} %
\date{\today}

\begin{document}

\begin{abstract}
We prove a Michael-Simon inequality for $2$-varifolds in $\mathbb{R}^N$, in arbitrary codimension, for anisotropies sufficiently close to the area functional. Building on the ideas of Almgren and De Philippis-Pigati, we introduce a projection method for anisotropic stress measures yielding geometric inequalities that apply in arbitrary dimension and codimension.
\end{abstract}

\maketitle

\vspace{-0.2in}

\section{Introduction}

The monotonicity formula for the area functional is a fundamental tool in the study of minimal submanifolds and their measure-theoretic counterparts, stationary varifolds~\cite{colding-minicozzi}.
Its many consequences include density bounds, compactness properties, and deep regularity results proved in the landmark works of Allard~\cites{allard-first-variation , allard-regularity }.
In the absence of a monotonicity formula for anisotropic integrands, a Michael-Simon inequality~\cite{michael-simon} is the primary tool for establishing such properties; this problem is posed as Question 8.2 in~\cite{gmt-differential-inclusions}.
Recently, De Philippis and Pigati~\cite{anisotropic-michael-simon} utilized ideas of Almgren to prove a Michael-Simon inequality for surfaces in $\bR^3$, with respect to anisotropies sufficiently close to the area functional.
They also showed, for integrands satisfying the atomic condition, that the Michael-Simon inequality is equivalent to compactness of rectifiable varifolds.

We prove a Michael-Simon inequality for $2$-varifolds in $\bR^N$, extending the $N=3$ result of~\cite{anisotropic-michael-simon}.

\begin{theorem}\label{thm:anisotropic-MS}
For every $N \geq 3$, there exist constants $\ve_N>0$ and $C_N<\infty$ with the following property.
For every $\Psi \in C^1(\bG(N,2) , (0,\infty) )$ with $\sup_{T \in \bG(N,2)} \|B_{\Psi}(T) - T \| \leq \ve_N$, every rectifiable $2$-varifold $V = v(M,\theta)$ in $\bR^N$ with finite total mass and finite first variation with respect to the anisotropy $\Psi$ satisfies
\[
\|V\| (\bR^N) \leq C_N \, \cH^2(M)^{\frac{1}{2}} \, |\delta_{\Psi} V|(\bR^N).
\]
In particular, if $\Theta^2(\|V\|,x) \geq \theta_0 > 0$ holds $\|V\|$-a.e.~$x$, then 
\[
\|V\|(\bR^N)^{\frac{1}{2}} \leq C_N \theta_0^{- \frac{1}{2}} |\delta_{\Psi} V|( \bR^N).
\]
\end{theorem}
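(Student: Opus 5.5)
We follow the De Philippis--Pigati strategy, which rests on Almgren's observation: the first variation of $V$ with respect to $\Psi$ is represented by a matrix-valued stress measure. Concretely,
\[
\delta_\Psi V(X) = \int \langle B_\Psi(T_xM), DX(x)\rangle \, d\|V\|(x),
\]
where the \emph{stress tensor} $B_\Psi(T)$ is a linear map (an $N\times N$ matrix) satisfying $B_\Psi(T)\to T$ as $\Psi\to$ area. The smallness hypothesis $\|B_\Psi(T)-T\|\le \ve_N$ says that $A(x):=B_\Psi(T_xM)$ is a small perturbation of the orthogonal projection onto $T_xM$.

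\textbf{Step 1: test against a radial field and rescale.} We test the variation with $X(x) = \phi(|x-y|/r)(x-y)$ for a fixed center $y$. For pure projections, $\operatorname{tr}(T)=2$, and this recovers the usual monotonicity computation. For $A$ we have instead $\operatorname{tr} A = 2+O(\ve_N)$, and there is an error $\langle A,\, \tfrac{(x-y)\otimes(x-y)}{|x-y|^2}\rangle$ that in the isotropic case equals $|T(x-y)|^2/|x-y|^2 \le 1$. For $A$ near $T$ the cross terms coming from the normal component of $x-y$ are only $O(\ve_N)$. These errors do not vanish, so there is no exact monotonicity; this is precisely the obstruction in the anisotropic setting.

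\textbf{Step 2: the projection method, done in the plane.} The new idea I would implement is to push the stress measure forward under orthogonal projections $P_S:\bR^N\to S$ onto $2$-planes $S\in\bG(N,2)$, or onto $3$-spaces. The pushforward $\mu_S := (P_S)_\#(A\,\|V\|)$, with $A$ conjugated by $P_S$, is a matrix-valued measure on $S$ whose distributional divergence is controlled by $(P_S)_\#|\delta_\Psi V|$. On a planar $2$-dimensional target the divergence-type equation can be solved, for instance by an Almgren-type isoperimetric argument, or by reducing to the $\bR^3$ result of \cite{anisotropic-michael-simon} when projecting to $3$-spaces. This yields, for each $S$,
\[
\|\operatorname{tr}_S \mu_S\|(S) \le C\, \mathcal H^2(M)^{1/2}\, |\delta_\Psi V|(\bR^N) + C\ve_N \|V\|(\bR^N).
\]

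\textbf{Step 3: average over planes and absorb.} We integrate over $S\in\bG(N,2)$ against the Haar measure. Since $A\approx T_xM$, averaging gives
\[
\int_{\bG(N,2)} \langle P_S, A(x)\rangle\, dS \ \ge\ c_N>0
\]
uniformly in $x$: the average of $|P_S T|^2$ over $S$ is a dimensional constant times $\operatorname{tr}T$. This recovers $c_N\|V\|(\bR^N)$ on the left-hand side. For $\ve_N$ small the error term $C\ve_N\|V\|$ is then absorbed, which gives the main inequality.

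\textbf{The second assertion.} For $\|V\|$-a.e.\ $x$, the density $\Theta^2(\|V\|,x)$ equals $\theta(x)$, so the hypothesis gives $\theta\ge\theta_0$ on $M$ up to a null set. Hence $\|V\|(\bR^N)\ge\theta_0\mathcal H^2(M)$, i.e.\ $\mathcal H^2(M)^{1/2}\le \theta_0^{-1/2}\|V\|(\bR^N)^{1/2}$. Substituting into the main inequality and dividing by $\|V\|(\bR^N)^{1/2}$ gives the claim.

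\textbf{Main obstacle.} The hardest step is Step 2. One must show that the projected stress retains a divergence structure and satisfies a Michael--Simon bound with constants independent of the fibre multiplicity created by projection. Overlapping sheets are the danger: they may inflate $\mathcal H^2(P_S M)$ in the wrong direction. I would first try projecting to $3$-spaces and quoting \cite{anisotropic-michael-simon}, verifying that the projected integrand's stress is still $\ve$-close to a projection.
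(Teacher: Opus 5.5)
Your outline (project the stress to $2$-planes, prove a planar bound, average over $\bG(N,2)$, absorb the $\ve$-errors) matches the paper's architecture. The gap is Step 2: it carries the whole theorem, it is only asserted, and with the trace as the functional it cannot be proved.

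\textbf{Why Step 2 fails for the trace.} As written, your Step 2 inequality follows from the theorem itself, since the trace of $\mu_S$ has mass at most $C\|V\|(\bR^N)$; so it cannot be quoted. Moreover, no argument using only the projected data ($\mu_S$, $\textup{Div}\,\mu_S$, and the area of a set carrying $\mu_S$) can produce it. Take $\Psi\equiv 1$, $T=\textup{span}(e_1,e_3)$, $S=\textup{span}(e_1,e_2)$, and let $V$ be the unit-density rectangle $[0,L]\times[0,\delta]$ in the coordinates $(x_1,x_3)$ of $T$. Then $\mu_S=\delta\,e_1\otimes e_1\,\cH^1\mres([0,L]\times\{0\})$ has these properties:
\begin{itemize}
\item its trace mass is $L\delta=\|V\|(\bR^N)$;
\item it is concentrated on a Lebesgue-null set;
\item $|\textup{Div}\,\mu_S|(\bR^2)=2\delta$.
\end{itemize}
Hence any planar estimate $\textup{tr}\,\mu(\bR^2)\leq C|E|^{1/2}(|\mu|+|\textup{Div}\,\mu|)+C|\textup{Div}\,\mu|$ fails as $L\to\infty$. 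Degenerate projections keep the trace while losing both area and divergence. Overlapping sheets are not the danger you identify: $\cL^2(\pi_S(M))\leq\cH^2(M)$ always.

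\textbf{Why the $3$-space route fails.} Let $\sigma_1,\sigma_2$ be the singular values of the projection restricted to $T_xM$. The pushed-forward stress, divided by the Jacobian $\sigma_1\sigma_2$, has tangential eigenvalues $\sigma_1/\sigma_2$ and $\sigma_2/\sigma_1$. By contrast, every $B_{\tilde\Psi}(\tilde T)$ satisfies $\tilde T B_{\tilde\Psi}(\tilde T)\tilde T=\tilde\Psi(\tilde T)\tilde T$. So once $\sigma_1/\sigma_2$ is far from $1$, the projected stress is neither $\ve$-close to $\tilde T$ nor of the form $B_{\tilde\Psi}$ for any integrand, even when $\Psi$ is the area. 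The $\bR^3$ result of De Philippis--Pigati therefore does not apply.

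\textbf{The missing idea.} You need a functional that is non-positive on degenerate projections but still has positive Haar average. The paper uses $\lambda_{2,\beta}(A)=\lambda_{\min}(\textup{sym}\,A)-\beta\,\textup{tr}\,A$ with $\beta=\frac{4-\pi}{8}$, and needs three facts.
\begin{enumerate}
\item Concavity and Jensen give $\Lambda_{2,\beta}(\mathbf{A}_P)\geq\int\lambda_{2,\beta}(E_P^tB_\Psi(T)E_P)\,dV$, so stacked sheets only help.
\item On positive semidefinite matrices, the rotational average of $\psi_2$ equals $\frac{4}{\pi}\lambda_{2,\beta}$. Also, $\Psi_2(\mathbf{A})$ is bounded by the Kakeya-type inequality applied to the two rows of $\mathbf{A}$, whose sharp constant $1$ in front of the negative cone parts makes them cancel exactly. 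Together these give $\Lambda_{2,\beta}(\mathbf{A})\leq C|E|^{1/2}(|\mathbf{A}|+|\textup{Div}\,\mathbf{A}|)+C|\textup{Div}\,\mathbf{A}|+\cN_2(\mathbf{A})+2\cK_2(\mathbf{A})$. The bounds $\cN_2(\mathbf{A}_P),\cK_2(\mathbf{A}_P)\leq\ve\|V\|(\bR^N)$ are the real source of your $C\ve\|V\|$ term.
\item In place of your trivially positive trace average, one needs $a_{N,2}>4\beta/N$. That is, the expected smallest eigenvalue of the compression of a fixed $2$-plane to a Haar-random $2$-plane must exceed $\beta$ times its expected trace. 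The paper proves this by the explicit computation $\bE|\la Tu,u\rg-\la Tv,v\rg|<2/N$.
\end{enumerate}
Absorption and scaling then go as you describe, and your derivation of the density corollary is correct.
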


\begin{remark}
While this manuscript was in the final stages of writing, the remarkable work of Gennaioli and Rindler~\cite{vanishing-mass-conjecture} appeared on arXiv, resolving Bouchitt\'e's vanishing mass conjecture.
We realized that some important compensated compactness results for measures proved in their work, notably~\cite{vanishing-mass-conjecture}*{Propositions 3.1 and 3.5}, could be combined with the projection techniques we develop in Section~\ref{section:smallest-eigenvalue} to establish the anisotropic Michael-Simon inequality for varifolds of every dimension and codimension.
We carry out this alternative approach in~\cite{near-area-functional}.
\end{remark}

Theorem~\ref{thm:anisotropic-MS} has several important consequences for rectifiable $2$-varifolds, notably density bounds, rectifiability of limits, compactness of varifolds, and a strong constancy theorem, we refer the reader to~\cites{derosa-ghiraldin , derosa-kolasinski , derosa-tione-regularity } developing the properties of anisotropic varifolds, and highlight the introduction of~\cite{anisotropic-michael-simon} where some of these properties are discussed.
We expand on these consequences for more general anisotropic varifolds in~\cite{near-area-functional}.

\subsection{Strategy of the proof}

The proof of Theorem~\ref{thm:anisotropic-MS} utilizes many techniques introduced in the beautiful recent work of De Philippis and Pigati~\cite{anisotropic-michael-simon }.
The key novelty is a projection method for anisotropic stress measures, introduced in Section~\ref{section:smallest-eigenvalue} and inspired by Brendle's ingenious proof of the sharp isoperimetric inequality using optimal transportation~\cites{brendle-sharp-isoperimetric , brendle-eichmair }.
In Brendle's argument, optimal transportation is combined with the normal bundle to lift the problem onto the ambient space, equipped with a rotationally symmetric density.
In our setting, the analogous role is played by the normalized Haar measure on the Grassmannian $\bG(N,m)$.
The projection method also allows us to reduce the Michael-Simon inequality to a quantitative non-concentration result for matrix-valued measures on $\bR^m$, which forms a key step in the proof of~\cite{near-area-functional}.

After these reductions, the problem is reduced to an inequality for a functional on vector-valued measures.
In Section~\ref{section:smallest-eigenvalue}, we construct such a functional, denoted by $\lambda_{2,\beta}(A) = \lambda_2(A) - \frac{4-\pi}{8} \textup{tr} \, A$, where $\lambda_2$ denotes the smallest eigenvalue of a $2\times 2$ matrix.
Crucially, Lemma~\ref{lemma:key-m=2} shows that $\lambda_{2,\beta}$ has positive Haar average under conjugation by elements of $\bG(N,2)$, for every $N \geq 3$.
On the other hand, Proposition~\ref{prop:A-div-S} provides a Michael-Simon type upper bound for the application of $\lambda_{2,\beta}$ on matrix-valued measures.
This estimate is a sharp refinement of the nonlinear inequality for planar vector fields from~\cite{anisotropic-michael-simon}*{Theorem 3.1}, which we obtain by controlling terms carefully in preliminary lemmas of Sections~\ref{section:elementary-lemmas} and~\ref{section:smallest-eigenvalue}.
We refer the reader to the discussion following Theorem~\ref{thm:kakeya-type-inequality} for more details on these steps.
In Section~\ref{section:proof-surfaces}, we combine these ingredients to prove Theorem~\ref{thm:anisotropic-MS}.

\smallskip \noindent \textbf{Acknowledgments.}
We are very grateful to Antonio de Rosa for many helpful discussions on anisotropic problems.
We also thank Simon Brendle and Toby Colding for inspiring conversations.
BF recognizes support from a Simons Dissertation Fellowship, a MathWorks Fellowship, and the Citadel GQS PhD Fellowship.
RT is supported by a Simons Dissertation Fellowship, an A.G.~Leventis Foundation Scholarship, and an Onassis Foundation Scholarship.

\section{Preliminaries}

Let $\Psi: \bG(N,m) \to (0,+\infty)$ be $C^1$.
We denote by $A^t$ the transpose of an endomorphism $A$ and identify $\bG(N,m)$ with the set of rank-$m$ orthogonal projections in $\bR^N$, namely 
\[
\bG(N,m) = \{ T \in \bR^{N \times N} : T = T^t, \; T^2 = T, \; \on{tr} T = m \}.
\]
Given an $m$-plane $T \in \bG(N,m)$, we define a tensor field $B_{\Psi}(T)$ by
\[
B_{\Psi}(T) = \Psi(T) T + T^{\perp} \, d \Psi(T) \, T, \qquad \text{where } \; d \Psi(T) := D \Psi(T) + D \Psi(T)^t.
\]
By this definition, $B_{\Psi}(T)$ satisfies $B_{\Psi}(T) T = B_{\Psi}(T)$ and
\begin{equation}\label{eqn:operator-norm}
    \sup_{T \in \bG(N,m)} \| B_{\Psi}(T) - T \| \leq \| \Psi - 1 \|_{C^0} + 2 \, \|D \Psi\|_{C^0} \leq 2 \, \| \Psi - 1 \|_{C^1}
\end{equation}
in operator norm.
Indeed, $\| T^{\perp} \, d \Psi(T) \, T \| \leq 2 \, \|D\Psi(T)\|$ because $T,T^{\perp}$ are orthogonal projections.
In particular, $\sup_{T \in \bG(N,m)} \|B_{\Psi}(T) \| \leq C_N$ whenever $\sup_{T \in \bG(N,m)} \|B_{\Psi}(T) - T \| \leq c_N$.

Let $V = v(M,\theta)$ be a rectifiable $m$-varifold in an open set $\Omega \subset \bR^N$, namely a Radon measure on $\Omega \times \bG(N,m)$.
We write $\|V\|$ for the weight measure, which satisfies $\|V\|(A) := V(A \times \bG(N,m))$ for every Borel set $A \subseteq \Omega$.
If $\Theta^m(\|V\|,x_0) \geq \theta_0$ a.e., then we can choose the rectifiable set $M$ to satisfy $\theta \geq \theta_0$ $\cH^m$-a.e., and hence $\cH^m(M) \leq \theta_0^{-1} \|V\|(\Omega)$.

The \textbf{anisotropic first variation} of $V$ with respect to the flow generated by a Lipschitz vector field $g \in \textup{Lip}_c(\Omega;\bR^N)$ is computed in~\cite{derosa-ghiraldin}*{Lemma A.2} as
\begin{equation}\label{eqn:first-variation}
[\delta_{\Psi} V] (g) = \int \la B_{\Psi}(T), D g(x) \rg \, dV(x,T) =
\int_{M} \la B_{\Psi} (T_x M), Dg(x) \rg \, \theta \, d \cH^m(x).
\end{equation}

\subsection{Some elementary lemmas}\label{section:elementary-lemmas}

We record some integral bounds for single-variable functions.
As in~\cite{anisotropic-michael-simon}*{Lemma 3.11}, we need a version of Riesz's rising sun lemma.
\begin{lemma}\label{lemma:rising-sum}
For every absolutely continuous function $\xi \in W^{1,1}([a,b])$, it holds that
\[
    \int_{ [a,b] \setminus E } ( \xi')_+ \leq \int_a^b (\xi')_-\, , \qquad \text{where } \; E := \{ s \in [a,b] : \xi(t) > \xi(s) \; \textup{for every} \; t>s \}.
\]
\end{lemma}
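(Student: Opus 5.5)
The plan is to reduce the claim to showing $\int_{U} \xi' \leq 0$, where $U := [a,b]\setminus E$. Indeed, if this holds, then
\[
\int_U (\xi')_+ = \int_U \xi' + \int_U (\xi')_- \leq \int_U (\xi')_- \leq \int_a^b (\xi')_- .
\]
To compute $\int_U \xi'$, I introduce the \emph{future minimum} $m(s) := \min_{[s,b]} \xi$ and the nonnegative remainder $h := \xi - m \geq 0$.

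\textbf{Properties of $m$ and $h$.}
\begin{enumerate}
\item $m$ is nondecreasing.
\item $m$ is absolutely continuous, since $|m(s)-m(s')|$ is bounded by the oscillation of $\xi$ on $[s,s']$. Hence $h \in W^{1,1}([a,b])$.
\item $h(b)=0$ and $h(a) \geq 0$.
\item On $E$ we have $\xi(s) = m(s)$, i.e.\ $h=0$. By the standard fact that an absolutely continuous function has derivative zero a.e.\ on each of its level sets, $h' = 0$ a.e.\ on $E$.
\end{enumerate}
Consequently,
\[
\int_U h' = \int_a^b h' - \int_E h' = h(b) - h(a) = -h(a) \leq 0 .
\]

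\textbf{Showing $m'=0$ a.e.\ on $U$.} It then remains to show $\int_U m' \leq 0$; in fact I claim $m' = 0$ a.e.\ on $U$. Let $F := \{h = 0\} \supseteq E$, and split $U$ into two pieces.

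On $U\setminus F$ we have $\xi(s) > m(s)$. The minimum of $\xi$ over $[s,b]$ is then attained strictly to the right of $s$, and by continuity this persists in a neighbourhood of $s$. So $m$ is locally constant near $s$ and $m'(s)=0$.

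On $F\setminus E$ we have $\xi(s) = m(s)$, and there is some $t>s$ with $\xi(t) \leq \xi(s) = m(s)$. This forces $\xi(t) = m(s)$, so $m$ is constant on $[s,t]$. Thus every point of $F\setminus E$ is either an interior point of a nondegenerate interval on which $m$ is constant, where $m'=0$, or the left endpoint of a maximal such interval. Maximal constancy intervals are disjoint and nondegenerate, so there are only countably many of them, and these endpoints form a null set. Hence $m'=0$ a.e.\ on $U$.

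\textbf{Conclusion.} Combining the two pieces gives
\[
\int_U \xi' = \int_U m' + \int_U h' = -h(a) \leq 0,
\]
which yields the lemma.

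The main obstacle I expect is the careful treatment of the boundary set $F\setminus E$, where the non-strict inequality defining $U$ prevents $U$ from being open. This is precisely why the classical rising sun decomposition into open intervals does not apply directly. The countability argument for endpoints of maximal constancy intervals of the monotone function $m$ is what handles it. A secondary point is to justify the a.e.\ vanishing of $h'$ on the level set $\{h=0\}$, which is the standard result for Sobolev/AC functions.
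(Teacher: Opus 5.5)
Your proof is correct, and it takes a genuinely different route from the paper.

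The paper argues by cases. If $\xi(b)\le\xi(a)$, the claim follows at once from $\int_a^b\xi'\le 0$. If $\xi(b)>\xi(a)$, it observes that the last point of each level set $\xi^{-1}(\lambda)$ with $\lambda\in[\xi(a),\xi(b)]$ lies in $E$, so that $\xi(E)\supseteq[\xi(a),\xi(b)]$. The one-dimensional area formula then gives $\int_E(\xi')_+\ge \xi(b)-\xi(a)=\int_a^b\bigl((\xi')_+-(\xi')_-\bigr)$, and rearranging proves the lemma.

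You instead write $\xi=m+h$, where $m(s)=\min_{[s,b]}\xi$ is the running minimum and $h=\xi-m\ge 0$. You then check two things. First, $h'=0$ a.e.\ on $E\subseteq\{h=0\}$. Second, $m'=0$ a.e.\ on $[a,b]\setminus E$: on the open set $\{h>0\}$ this holds because $m$ is locally constant there, and on $\{h=0\}\setminus E$ it holds via the countably many constancy intervals of $m$. Together these give the exact identity $\int_{[a,b]\setminus E}\xi'=\min_{[a,b]}\xi-\xi(a)\le 0$.

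The paper's argument is shorter and never has to examine the awkward non-open set $\{h=0\}\setminus E$. It pays for this by relying on the area formula, and it tacitly uses that $\xi'\ge 0$ a.e.\ on $E$ to replace $|\xi'|$ by $(\xi')_+$. Your argument avoids both the case split and the area formula. In exchange it needs the absolute continuity of $m$, the vanishing of $h'$ on its zero set, and the countability of the constancy intervals of $m$. It also computes the defect exactly as $\xi(a)-\min_{[a,b]}\xi$, rather than only bounding it. That extra precision is not needed for the application in Proposition~\ref{prop:generalized-flow-boxes}.
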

\begin{proof}
    If $\xi(b) \leq \xi(a)$, then $\int_a^b (\xi')_+ \leq \int_a^b (\xi')_-$ proves the claim.
    Suppose now that $\xi(b) > \xi(a)$, so for $\lambda \in [\xi(a), \xi(b)]$, the last point in the level set $\xi^{-1}(\lambda)$ belongs to $E$.
    Hence, up to a null set, the image $\xi(E)$ contains the interval $[\xi(a), \xi(b)]$, so the one-dimensional area formula implies $\int_E (\xi')_+ \geq \xi(b) - \xi(a) = \int_a^b (\xi')_+ - (\xi')_-$.
    The desired inequality follows by rearranging terms.
\end{proof}

We also prove an integral bound for the slopes of curves contained in a slab of $\bR^2$, which is similar to~\cite{anisotropic-michael-simon}*{Lemma 3.12}.
The following Lemma~\ref{lemma:lipschitz-curve} has the advantage that the constants in the bound are explicit and equal to $1$, which will be an essential ingredient to proving the desired Michael-Simon inequality for surfaces in arbitrary codimension.

\begin{lemma}\label{lemma:lipschitz-curve}
Given a unit direction $e \in \bS^{m-1}$, we consider the strip
\[
\cU^e_j := \{ x \in \bR^m : j \leq \la x, e \rg \leq j+1 \}.
\]
Let $\gamma : [a,b] \to \cU_j^e$ be a Lipschitz curve and write $\dot{\gamma}(t)^{\perp} := \dot{\gamma}(t) - \la \dot{\gamma}(t), e \rg e$.
Then it holds that
\[
\int_a^b ( \la \dot{\gamma}(t), e \rg - |\dot{\gamma}(t)^{\perp}| )_+ \, dt \leq 1 + \int_a^b ( \la \dot{\gamma}(t), e \rg - |\dot{\gamma}(t)^{\perp}|)_- \, dt.
\]
Moreover, if $\la \gamma(b) , e \rg \leq \la \gamma(a), e \rg$, and in particular if the curve is closed, we have
\[
\int_a^b ( \la \dot{\gamma}(t), e \rg - |\dot{\gamma}(t)^{\perp}| )_+ \, dt \leq \int_a^b ( \la \dot{\gamma}(t), e \rg - |\dot{\gamma}(t)^{\perp}|)_- \, dt.
\]
Finally, given any non-negative function $h \in W^{1,1}([a,b])$ with $\min h = 0$, we have
\[
\int_a^b h(t) \, ( \la \dot{\gamma}(t), e \rg - |\dot{\gamma}(t)^{\perp}|)_+ \, dt \leq \int_a^b |h'| \, dt + \int_a^b h(t) \, ( \la \dot{\gamma}(t), e \rg - |\dot{\gamma}(t)^{\perp}|)_- \, dt.
\]
\end{lemma}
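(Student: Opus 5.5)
Write $f(t) := \la \dot{\gamma}(t), e \rg - |\dot{\gamma}(t)^{\perp}|$ and $\xi(t) := \la \gamma(t), e \rg$. Then $\xi$ is Lipschitz with $\xi' = \la \dot\gamma, e\rg$, so $f \leq \xi'$, and $\xi$ takes values in $[j,j+1]$. The plan is to compare $f$ with $\xi'$ and use the bounded oscillation of $\xi$.

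\textbf{First inequality.} It is equivalent to $\int_a^b f \leq 1$, since $f = f_+ - f_-$ and all terms are integrable ($|f| \leq 2|\dot\gamma|$). This follows at once from
\[
\int_a^b f \leq \int_a^b \xi' = \xi(b) - \xi(a) \leq 1,
\]
using that $\gamma$ stays in the strip $\cU^e_j$ of width $1$.

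\textbf{Second inequality.} If $\xi(b) \leq \xi(a)$, the same computation gives $\int_a^b f \leq 0$, which after rearranging is the claim. A closed curve has $\xi(b)=\xi(a)$, so it is a special case. Note that Lemma~\ref{lemma:rising-sum} is not needed for these two statements; the pointwise bound $f \leq \xi'$ suffices.

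\textbf{Weighted inequality.} I would use the layer-cake formula $h(t) = \int_0^\infty \mathbf 1_{\{h > s\}}(t)\, ds$ and Fubini:
\[
\int_a^b h f = \int_0^\infty \int_{\{h>s\}} f \, dt\, ds .
\]
For a.e.\ level $s>0$, the open set $\{h>s\}\subset[a,b]$ is a countable union of relatively open intervals $I_k$. Since $\min h = 0$, no $I_k$ can be all of $[a,b]$, so each $I_k$ has at least one endpoint where $h = s$.
\begin{itemize}
\item On each $I_k$, the first inequality applied to $\gamma|_{I_k}$ gives $\int_{I_k} f \leq 1$. Summing over $k$ gives $\int_{\{h>s\}} f \leq \#\{k\}$.
\item For a.e.\ $s$, the coarea formula in one dimension gives $\int_0^\infty \#\{t : h(t)=s\}\, ds = \int_a^b |h'|$ (Banach indicatrix).
\item The number of components of $\{h>s\}$ is at most $\#\{t : h(t) = s\}$, because each component has an endpoint at level $s$ and distinct components have distinct such endpoints up to a shared endpoint. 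A shared point is counted by both neighbouring components, so I need care here.
\end{itemize}
Integrating over $s$ then gives $\int_a^b h f \leq \int_a^b |h'|$, which is the claim after rearranging $f = f_+ - f_-$.

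\textbf{Main obstacle.} The delicate step is the component-counting: an injective assignment of a boundary point at level $s$ to each component. I would assign to each interval component its left endpoint if that endpoint is at level $s$, and otherwise its right endpoint. The only interval whose left endpoint is not at level $s$ is one starting at $a$. Two adjacent intervals cannot both be assigned the same point $p$: that would require the right endpoint of the earlier one and the left endpoint of the later one to both be $p$, but the later interval takes its left endpoint, and the earlier one takes its right endpoint only if its own left endpoint is $a$. If collisions still arise, I would fall back on $2\#\{h=s\}$, which would lose a factor of $2$. To avoid that factor I would use a sharper per-component bound: on a component $I_k = (c,d)$ with only one endpoint at level $s$, apply the first inequality with bound $1$. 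On a component with both endpoints at level $s$, where $h(c)=h(d)=s$, I would instead bound $\int_{I_k} f \leq \xi(d)-\xi(c)$ and try to show that these signed increments telescope, so that they are controlled by $1$ per level point. I expect this bookkeeping to be the main difficulty.
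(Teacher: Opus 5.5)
\textbf{There is a gap at the component-counting step.}

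Your treatment of the first two inequalities is the paper's argument. You integrate the pointwise bound $f\le\la\dot\gamma,e\rg$ and use that $\la\gamma,e\rg$ increases by at most $1$ across the strip, and not at all when $\la\gamma(b),e\rg\le\la\gamma(a),e\rg$. For the weighted inequality you also follow the paper's route: layer-cake in $h$, the first inequality on each component of $\{h>s\}$, and the identity $\int_0^{\max h}\#\{h=s\}\,ds=\int_a^b|h'|$. But the proposal stops exactly at the step you flag, so the third inequality is not yet proved.

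What is needed is $N_s\le\#\{h=s\}$, where $N_s$ is the number of components of $\{h>s\}$. None of your three options closes this.
\begin{enumerate}[(i)]
\item Your rule (take the left endpoint if it lies at level $s$, otherwise the right endpoint) does collide when $\{h>s\}$ contains $[a,p)$ and $(p,d)$ with $h(a)>s=h(p)$.
\item The fallback $N_s\le 2\#\{h=s\}$ only gives $\int_a^b hf_+\le 2\int_a^b|h'|+\int_a^b hf_-$. That is not the stated inequality. Downstream, in Proposition~\ref{prop:generalized-flow-boxes}, it would only worsen the coefficient of the divergence terms, which is harmless, but it is not the lemma.
\item The telescoping alternative is not carried out.
\end{enumerate}

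\textbf{The fix: split at a zero of $h$.} Use $\min h=0$ more fully than merely to exclude the component $[a,b]$. Fix $t_0\in[a,b]$ with $h(t_0)=0$. For $s>0$ no component of $\{h>s\}$ contains $t_0$, so each component lies in $[a,t_0)$ or in $(t_0,b]$.
\begin{enumerate}[(i)]
\item To a component $I\subset[a,t_0)$ assign $d:=\sup I$. Continuity gives $h(d)\ge s>h(t_0)$, so $d<t_0\le b$. Hence $d\notin I$ by relative openness, and maximality of $I$ forces $h(d)=s$.
\item Symmetrically, to a component in $(t_0,b]$ assign its infimum, which also lies at level $s$.
\item Disjoint open intervals cannot share a supremum (or an infimum), and the two families of assigned points are separated by $t_0$. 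So the assignment is injective.
\end{enumerate}
This gives $N_s\le\#\{h=s\}$ for every $s>0$. In particular $N_s<\infty$ for a.e.~$s$, which also justifies your finite summation.

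Your collision configuration is in any case non-generic. A shared endpoint is an interior local minimum of $h$ with value $s$. Such values lie in $h(\{h'=0\})\cup h(\{h\ \text{not differentiable}\})$, which is $\cL^1$-null for absolutely continuous $h$. The splitting argument simply avoids needing this. The paper passes from ``every component has a boundary point in $\{h=s\}$'' to $N_s\le\#\{h=s\}$ in one line; splitting at a zero of $h$ is what makes that step rigorous.
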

\begin{proof}
    Because $\gamma$ is Lipschitz, its components are absolutely continuous and differentiable a.e.
    We set $f(t) := \la \dot{\gamma}(t), e \rg - |\dot{\gamma}(t)^{\perp}|$ and use $f = f_+ - f_-$ and the absolute continuity of $\gamma$ to compute
    \[
    \int_a^b f_+(t) \, dt = \int_a^b f_-(t) \, dt + \int_a^b \la \dot{\gamma}(t), e \rg \, dt - \int_a^b |\dot{\gamma}(t)^{\perp}| \, dt \leq ( \la \gamma(b), e \rg - \la \gamma(a), e \rg)_+ + \int_a^b f_-(t) \, dt.
    \]
    Moreover, $ ( \la \gamma(b), e \rg - \la \gamma(a), e \rg)_+  = 0$ for a curve with $\la \gamma(b), e \rg \leq \la \gamma(a), e \rg$, including closed curves.
    Also, $ ( \la \gamma(b), e \rg - \la \gamma(a), e \rg)_+ \leq 1$ for a general curve with $\gamma([a,b]) \subseteq \cU^e_j$.
    This proves our claim.

    Regarding the second inequality, the one-dimensional coarea formula implies that
    \[
    \int_a^b h(t) f_+(t) \, dt = \int_0^{\max h} \int_{ \{ h > s \} } f_+(t) \, dt \, ds.
    \]
    For a.e.~$s$, the region $\{ h> s\}$ is an at most countable union of disjoint open intervals.
    By the one-dimensional area formula,
    \begin{equation}\label{eqn:no-of-components}
    \int_0^{\max h} \# \{ h=s \} \, ds = \int_a^b |h'| \, dt < \infty,
    \end{equation}
    so for a.e.~$s$, we have $\# \{h=s\} < \infty$, and consequently only finitely many components $N_s$ occur for those $s$.
    Applying the previous inequality to each component shows that
    \[
    \int_{ \{ h>s \} } f_+(t) \, dt  \leq N_s + \int_{  \{ h>s \} } f_-(t) \, dt.
    \]
    Because $\min h = 0$, every component of $\{ h> s\}$ has at least one boundary point in $\{ h=s\}$, and hence $N_s \leq \# \{ h=s\}$.
    The claimed inequality follows from combining~\eqref{eqn:no-of-components} with these steps.
\end{proof}

\subsection{Tools from measure theory}\label{section:measure-theory}

The proof of Theorem~\ref{thm:anisotropic-MS} will invoke some key tools from measure theory, notably Smirnov's decomposition for currents and the Reshetnyak continuity theorem.
We first record a standard strict approximation lemma for vector-valued measures.
\begin{lemma}\label{lemma:strict-approximation}
    For $n, d \geq 1$, consider a measure $\mu \in \cM( \bR^n; \bR^d)$ and let $\rho_{\ve}$ be a standard non-negative mollifier, with $\mu_{\ve} := \rho_{\ve} \ast \mu \in \cL^n$.
    Then, $\mu_{\ve} \to \mu$ strictly as $\ve \downarrow 0$, meaning that
    \[
    \mu_{\ve} \xrightharpoonup{*} \mu, \qquad |\mu_{\ve}|(\bR^n) \to |\mu|(\bR^n).
    \]
    More generally, let $\cA = \sum_{j=1}^n A_j \partial_j$ be a first-order linear operator and suppose that $\cA \mu \in \cM ( \bR^n; \bR^{\ell})$.
    Then, we have $\cA \mu_{\ve} = ( \rho_{\ve} \ast \cA \mu) \, \cL^n$ and $\cA \mu_{\ve} \to \cA \mu$ strictly.
    If the measures $\mu$ and $\cA \mu$ are compactly supported, both families of variation measures are uniformly tight.
\end{lemma}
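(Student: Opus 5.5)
The proof is a direct computation with the Fourier transform, which converts convolution into multiplication and $\partial_j$ into multiplication by $i\xi_j$; the nonnegativity of the mollifier then gives the mass convergence.

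\textbf{Weak-$*$ convergence.} For $\varphi \in C_c(\bR^n)$ I will use Fubini to write
\[
\int \varphi \, d\mu_\ve = \int (\check\rho_\ve \ast \varphi) \, d\mu, \qquad \check\rho_\ve(x) := \rho_\ve(-x).
\]
Since $\check\rho_\ve \ast \varphi \to \varphi$ uniformly and with supports in a fixed compact set, this gives $\mu_\ve \xrightharpoonup{*} \mu$.

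\textbf{Convergence of total variation.} Because $\rho_\ve \geq 0$ and $\int \rho_\ve = 1$, the pointwise bound $|\rho_\ve \ast \mu|(x) \leq (\rho_\ve \ast |\mu|)(x)$ holds. Integrating and using Fubini yields $|\mu_\ve|(\bR^n) \leq |\mu|(\bR^n)$. Lower semicontinuity of total variation under weak-$*$ convergence gives the reverse inequality $\liminf_{\ve \downarrow 0} |\mu_\ve|(\bR^n) \geq |\mu|(\bR^n)$, so the masses converge and the convergence is strict.

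\textbf{The operator $\cA$.} First I will check the identity $\cA \mu_\ve = (\rho_\ve \ast \cA\mu)\,\cL^n$ in the distributional sense. For a test function $\varphi \in C_c^\infty(\bR^n;\bR^\ell)$,
\[
\la \cA\mu_\ve, \varphi\rg = -\sum_j \int \la A_j \mu_\ve, \partial_j \varphi \rg .
\]
Moving the convolution onto $\varphi$ turns this into $\la \cA\mu, \check\rho_\ve \ast \varphi \rg$, since convolution commutes with derivatives and the $A_j$ are constant matrices. Moving the convolution back gives $\la \rho_\ve \ast \cA\mu, \varphi \rg$. Now $\cA\mu$ is a finite measure, so the first part of the lemma applies to it with $d = \ell$, and this gives $\cA\mu_\ve \to \cA\mu$ strictly.

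\textbf{Tightness.} If $\mu$ and $\cA\mu$ are supported in $B_R$, then for $\ve < 1$ all the measures $\mu_\ve$ and $\cA\mu_\ve$ are supported in $B_{R+1}$. Their variations are also bounded uniformly, by $|\mu|(\bR^n)$ and $|\cA\mu|(\bR^n)$ respectively, so both families are uniformly tight.

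I do not expect a genuine obstacle here. The only point that needs care is lower semicontinuity of total variation for vector-valued measures, which is standard: write $|\mu|(\bR^n) = \sup \int \varphi \cdot d\mu$ over $\varphi \in C_c$ with $|\varphi| \leq 1$, and note that a supremum of weak-$*$ continuous functionals is lower semicontinuous.
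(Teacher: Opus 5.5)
Your proof is correct and follows essentially the same route as the paper. The common steps are: duality and Fubini for the weak-$*$ convergence; the pointwise bound $|\rho_\ve \ast \mu| \leq \rho_\ve \ast |\mu|$ (the paper's Jensen step) together with weak-$*$ lower semicontinuity for the masses; commuting the constant-coefficient operator $\cA$ with convolution and then reapplying the first part to $\cA\mu$; and support containment for tightness. Your opening sentence is misleading, since nothing in the argument uses the Fourier transform and none is needed, so drop that framing.
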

\begin{proof}
    The weak-* convergence is standard from the fact that $\{ \rho_{\ve} \}$ is an approximate identity.
    By Jensen's inequality and Fubini's theorem, we have
    \[
    |\mu_{\ve}|(\bR^n) = \int_{\bR^n} |\rho_{\ve} \ast \mu |\, dx \leq |\mu| (\bR^n).
    \]
    On the other hand, the weak-* lower semicontinuity of the total variation gives
    \[
    |\mu|(\bR^n) \leq \liminf_{\ve \downarrow 0} |\mu_{\ve}|(\bR^n),
    \]
    and hence $\mu_{\ve} \to \mu$ strictly; this proves the first assertion.

    For the second assertion, we use the fact that constant coefficient differential operators commute with convolution, so $\cA \mu_{\ve} = ( \rho_{\ve} \ast \cA \mu) \, \cL^n$.
    Hence, the same result applies to $\cA \mu$.
    Finally, for a compactly supported standard mollifier with $\textup{spt} \, \rho_{\ve} \subset B_{C \ve}$, we have
    \[
    \textup{spt} \, \mu_{\ve} \subset \textup{spt} \, \mu + B_{C \ve}, \qquad \textup{spt} \, \cA \mu_{\ve} \subset \textup{spt} \, \cA \mu + B_{C \ve}.
    \]
    Thus, the above measures are uniformly tight.
    This completes the proof.
\end{proof}

Under the strict convergence of measures as above, we can invoke Reshetnyak's continuity theorem (see, for example,~\cite{reshetnyak} and~\cite{ambrosio-fusco-pallara}*{Theorem 2.39}) in the following form.
\begin{lemma}\label{lemma:reshetnyak-continuity}
    Let $\mu_j, \mu \in \cM( \Omega; \bR^d)$ be vector-valued measures on an open set $\Omega \subset \bR^n$ that converge strictly, $\mu_j \to \mu$, meaning that $\mu_j \xrightharpoonup{*} \mu$ and $|\mu_j|(\Omega) \to |\mu|(\Omega)$.
    Let $F: \Omega \times \bR^d \to \bR$ be a continuous, positively $1$-homogeneous function in the second variable satisfying a uniform linear bound,
    \[
    F(x,tz) = t \, F(x,z) \quad \text{for every } \; t \geq 0, \qquad |F(x,z)| \leq C |z| \quad \text{for every } \; (x,z) \in \Omega \times \bR^d.
    \]
    Then, it holds that
    \[
    \int_{\Omega} F \Bigl( \frac{d \mu_j}{d |\mu_j|} \Bigr) \, d |\mu_j| \to \int_{\Omega} F \Bigl( \frac{d \mu}{d |\mu|} \Bigr) \, d |\mu|.
    \]
\end{lemma}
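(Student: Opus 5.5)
The statement is Reshetnyak's continuity theorem in the form of Lemma~\ref{lemma:reshetnyak-continuity}, so the plan is the standard one: lift the measures to the product space $\Omega \times \bS^{d-1}$ and apply weak-* convergence there. Concretely, for each $j$ I will define the positive Radon measure $\nu_j$ on $\Omega \times \bS^{d-1}$ as the pushforward of $|\mu_j|$ under $x \mapsto (x, \frac{d\mu_j}{d|\mu_j|}(x))$, and similarly $\nu$ for $\mu$. By positive $1$-homogeneity and $|\frac{d\mu_j}{d|\mu_j|}| = 1$ $|\mu_j|$-a.e., the quantity to study is
\[
\int_{\Omega} F\Bigl(x,\frac{d \mu_j}{d |\mu_j|}\Bigr)\, d|\mu_j| = \int_{\Omega \times \bS^{d-1}} F(x,z)\, d\nu_j(x,z),
\]
and $F$ restricted to $\Omega \times \bS^{d-1}$ is continuous and bounded by $C$. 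It therefore suffices to show $\nu_j \xrightharpoonup{*} \nu$ as measures on $\Omega \times \bS^{d-1}$, with convergence of total masses, so that the convergence extends from $C_c$ to bounded continuous test functions.

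The key steps, in order, are as follows. First, the masses $\nu_j(\Omega\times\bS^{d-1}) = |\mu_j|(\Omega)$ are bounded, so any subsequence has a further subsequence with $\nu_j \xrightharpoonup{*} \sigma$ locally for some positive Radon measure $\sigma$. Second, I identify the first marginal and first moment of $\sigma$. Testing with $\varphi(x) z$ for $\varphi \in C_c(\Omega)$ gives $\int \varphi(x) z \, d\sigma = \lim \int \varphi \, d\mu_j = \int \varphi \, d\mu$. Letting $\lambda$ be the projection of $\sigma$ onto $\Omega$ and disintegrating $\sigma = \lambda \otimes \sigma_x$ with $\sigma_x$ probability measures on $\bS^{d-1}$, this says $\mu = \bigl(\int z\, d\sigma_x\bigr) \lambda$. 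Hence $|\mu| \le \lambda$ with equality only where $|\int z \, d\sigma_x| = 1$. Third, I use strict convergence: by lower semicontinuity $\lambda(\Omega) \le \liminf |\mu_j|(\Omega) = |\mu|(\Omega)$. Combined with $|\mu| \le \lambda$, this forces $\lambda = |\mu|$ and $|\int z\, d\sigma_x| = 1$ for $\lambda$-a.e.\ $x$. Since the unit sphere is strictly convex, the barycenter of a probability measure on $\bS^{d-1}$ has norm $1$ only if the measure is a Dirac mass. Thus $\sigma_x = \delta_{\frac{d\mu}{d|\mu|}(x)}$ and $\sigma = \nu$, independent of the subsequence.

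Finally, I upgrade from $C_c$ test functions to the bounded continuous function $F$. Because $\nu_j(\Omega\times\bS^{d-1}) = |\mu_j|(\Omega) \to |\mu|(\Omega) = \nu(\Omega\times\bS^{d-1})$, no mass escapes to $\partial\Omega$ or to infinity. So for any $\eta>0$ there is a compact $K \subset \Omega$ with $\nu_j((\Omega\setminus K)\times \bS^{d-1}) < \eta$ for all large $j$, obtained by testing with a cutoff $\chi \in C_c(\Omega)$, $0\le\chi\le1$, satisfying $\int \chi\, d|\mu| > |\mu|(\Omega)-\eta$. Splitting $F = \chi F + (1-\chi)F$, the first piece converges by weak-* convergence and the second is bounded by $C\eta$ uniformly, which concludes the proof.

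The main obstacle is the third step: showing the limit measure $\sigma$ is concentrated on the graph of the polar $\frac{d\mu}{d|\mu|}$. This is exactly where strict convergence, and not mere weak-* convergence, is used, together with the equality case of Jensen's inequality for the strictly convex norm. A minor technical point is that $F(x,\cdot)$ is only continuous, so working on the compact fiber $\bS^{d-1}$ rather than on $\bR^d$ is essential to obtain boundedness.
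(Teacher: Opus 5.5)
Your proof is correct. The paper gives no proof of this lemma and instead cites Reshetnyak's continuity theorem (via Spector and Ambrosio--Fusco--Pallara, Theorem~2.39). Your argument is essentially the standard proof in Ambrosio--Fusco--Pallara: lift to $\Omega \times \bS^{d-1}$, use strict convergence to identify the first marginal as $|\mu|$, use strict convexity of the sphere to force Dirac fibers, then use mass convergence to test against the bounded continuous $F$.
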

We will also use the following planar consequence of Smirnov's decomposition theorem for normal $1$-currents~\cite{smirnov}.
We identify an $\bR^2$-valued measure $J$ of finite mass and finite distributional divergence with its associated normal $1$-current; under this identification, $\mathbf{M} (\partial J) = |\textup{div} \, J|$.
\begin{theorem}\label{thm:smirnov}
Let $J$ be a normal $1$-current in $\bR^2$.
Then, there exist non-negative measures $\lambda, \mu$; a family of injective arcs $\{ \gamma_{\sigma} \}_{\sigma \in \Sigma}$ of finite length; and a family of integral $1$-cycles $\{ \Gamma_{\xi} \}_{\xi \in \Xi}$, each of which is a countable union of finite-length loops, such that
\[
J = \int_{\Sigma} [\![ \gamma_{\sigma} ]\!] \, d \lambda(\sigma) + \int_{\Xi} \Gamma_{\xi} \, d \mu(\xi)
\]
with no cancellation of mass.

Moreover, the arc part has no boundary cancellation: for every open set $\Omega \subset \bR^2$, it holds that
\[
\mathbf{M}( \partial J ; \Omega) = \int \mathbf{M} ( \partial [\![ \gamma_{\sigma} ]\!] ; \Omega) \, d \lambda(\sigma)
\]
Consequently, if $\Sigma_{\Omega}$ denotes the collection of arcs having at least one endpoint in $\Omega$, we have
\[
\Sigma_{\Omega} := \{ \sigma : \gamma_{\sigma} \; \text{has an endpoint in } \Omega \}, \qquad \lambda(\Sigma_{\Omega}) \leq \mathbf{M} ( \partial J; \Omega).
\]
\end{theorem}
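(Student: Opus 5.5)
The plan is to deduce the statement from Smirnov's decomposition theorem for normal $1$-currents in $\bR^N$~\cite{smirnov}, specialized to $N=2$. The general theorem writes any normal $1$-current as $J = \int \llbracket\gamma\rrbracket\,d\eta(\gamma)$, a superposition of oriented Lipschitz curves of finite length. Two properties come with this superposition. First, there is no cancellation of mass: $\mathbf{M}(J) = \int \mathbf{M}(\llbracket\gamma\rrbracket)\,d\eta$. Second, there is no cancellation of boundary, as an equality of measures: $|\partial J| = \int |\partial\llbracket\gamma\rrbracket|\,d\eta$. More precisely, Smirnov splits $J = J_{\mathrm{acyclic}} + C$, where $C$ is a solenoid ($\partial C=0$) and there is no mass cancellation between the two pieces. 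The acyclic part is an integral of simple arcs with total boundary mass equal to $\mathbf{M}(\partial J)$. The solenoidal part is an integral of elementary solenoids.

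\textbf{Arc part.} I take $\Sigma$ to be the space of injective arcs of finite length, with $\lambda$ the pushforward of the acyclic decomposition measure. The boundary identity $|\partial J| = \int |\partial\llbracket\gamma_\sigma\rrbracket|\,d\lambda$ holds as measures, so restricting to any open $\Omega$ gives the localized statement. For the final inequality, each arc $\gamma_\sigma$ with an endpoint in $\Omega$ is injective, so its endpoints are distinct and $\partial\llbracket\gamma_\sigma\rrbracket = \delta_{\mathrm{end}} - \delta_{\mathrm{start}}$. Hence $\mathbf{M}(\partial\llbracket\gamma_\sigma\rrbracket;\Omega) \geq 1$ on $\Sigma_\Omega$. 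Integrating gives $\lambda(\Sigma_\Omega) \leq \mathbf{M}(\partial J;\Omega)$.

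\textbf{Cycle part.} In general dimension, elementary solenoids need not be closed curves, so this is where planarity is used. I would argue that a boundaryless normal $1$-current in $\bR^2$ is a divergence-free vector measure and therefore has a stream function: $C = \nabla^\perp u$ with $u \in BV(\bR^2)$, normalized to vanish at infinity, and $|C| = |Du|$. The coarea formula for $BV$ functions then gives
\[
C = \int_{\bR} \partial\llbracket\{u>t\}\rrbracket\,dt, \qquad |Du| = \int |D\mathbf{1}_{\{u>t\}}|\,dt,
\]
with no cancellation of mass. For a.e.\ $t$ the set $\{u>t\}$ has finite perimeter. By the planar decomposition theorem for sets of finite perimeter (Ambrosio--Caselles--Masnou--Morel), its reduced boundary is, up to $\cH^1$-null sets, a countable union of rectifiable Jordan curves with total length equal to the perimeter. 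Each $\Gamma_t := \partial\llbracket\{u>t\}\rrbracket$ is therefore an integral $1$-cycle made of countably many finite-length loops. I take $\Xi = \bR$ and $\mu = \mathcal{L}^1$ restricted to the $t$ with $\Gamma_t \neq 0$.

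\textbf{Gluing and the main obstacle.} The no-cancellation statement for $J$ combines two facts: the mass additivity in Smirnov's splitting $J = J_{\mathrm{acyclic}} + C$, and the coarea identity $|C| = \int |\Gamma_t|\,dt$. Together these give $|J| = \int |\llbracket\gamma_\sigma\rrbracket|\,d\lambda + \int |\Gamma_\xi|\,d\mu$.

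The main technical obstacle is the cycle part. One must justify that $C$, a finite measure with $\operatorname{div} C = 0$, is of the form $\nabla^\perp u$ for some $u \in BV$; for this I would mollify and pass to the limit, or solve $\Delta u = \operatorname{curl}$ with decay. One must also invoke the Jordan-curve decomposition of planar finite-perimeter sets. The measurability of $t \mapsto \Gamma_t$ follows from the coarea formula.
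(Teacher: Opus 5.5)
Your proposal is correct and follows the paper's route. The paper gives no argument beyond calling the statement a planar consequence of Smirnov's theorem. Your reduction makes explicit the one genuinely two-dimensional step that this citation leaves implicit. You use Smirnov's acyclic/cyclic splitting for the arcs and the localized boundary identity. You then use a stream function, the $BV$ coarea formula, and the Ambrosio--Caselles--Masnou--Morel Jordan-curve decomposition to show that planar solenoids are superpositions of finite-length loops.

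There are two small points of precision. First, a decomposition into finite-length curves with no boundary cancellation exists in general $\bR^N$ only for the acyclic part, which is what you then correctly use. Second, the stream function need not lie in $L^1$; you only have $|Du|(\bR^2)<\infty$ and $u\in L^2$ after normalization. So for $t<0$ you should apply the Jordan-curve decomposition to the finite-measure complement $\{u\le t\}$.
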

The second identity implies that the cycle part has zero boundary and the arc decomposition has no boundary cancellation.
Since there is no mass cancellation, for $\lambda$-a.e.~$\sigma$, the oriented tangent to $\gamma_{\sigma}$ agrees a.e.~with the polar direction of $J$.
The cycle part also satisfies the analogous statement.

Finally, we discuss a method of assigning a smooth direction to a vector field through its zero set by truncation and extension.
The following technical lemma modifies~\cite{anisotropic-michael-simon}*{Lemma 3.15} to every dimension using a simpler argument.
\begin{lemma}\label{lemma:S-vector-field}
Let $m \geq 2$ and consider a vector field $S \in C_c^{\infty}(\bR^m;\bR^m)$.
For every sufficiently small $\tau> 0$, there exists a finite set $F_{\tau} \subset \bR^m$ and
\[
S_{\tau} \in C_c^{\infty}(\bR^m; \bR^m), \qquad \eta_{\tau} \in C_c^{\infty} (\bR^m), \qquad U_{\tau} \in C^{\infty}(\bR^m \setminus F_{\tau} ; \bS^{m-1})
\]
with the following properties.
\begin{enumerate}[(i)]
    \item We have $S_{\tau} = \eta_{\tau} U_{\tau}$ for $\eta_{\tau} \geq 0$, $U_{\tau}$ is a unit vector field, and, as $\tau \downarrow 0$,
    \[
    S_{\tau} \to S, \qquad \textup{div} \, S_{\tau} \to \textup{div} \, S \quad \text{in } \; L^1(\bR^m).
    \]
    \item We have $U_{\tau} = e$ outside a compact set, where $e \in \bS^{m-1}$ is any prescribed vector.
    Also, for every $p \in F_{\tau}$, there exist $r_p > 0$ and $Q_p \in O(m)$ such that
    \[
    U_{\tau}(x) = Q_p \frac{x-p}{|x-p|} \quad \text{for } \; 0 < |x-p| < r_p.
    \]
    The sign $\det Q_p = \pm 1$ is the local degree of the singularity.
    \item For $m=2$, depending on the index $\pm 1$ of the point $p$, we can moreover arrange
    \[
    U_{\tau}(z) = i \frac{z-p}{|z-p|} \quad \text{if $p$ has index $1$}, \qquad U_{\tau}(z) = i \frac{\overline{z-p}}{|z-p|} \quad \text{if $p$ has index $-1$}.
    \]
    Moreover, all but finitely many maximal integral curves of $U_{\tau}$ are defined for all time and are disjoint from $F_{\tau}$.
\end{enumerate}
\end{lemma}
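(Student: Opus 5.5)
Given $S \in C_c^\infty(\bR^m;\bR^m)$, the plan is to make the zero set of $S$ nongeneric only at finitely many points, cut $S$ off near zero, and then fill in a unit direction field with model singularities. First I perturb $S$ to a nearby field whose zeros are nondegenerate, and only then truncate.

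\textbf{Step 1 (generic perturbation and outer region).} Choose $e \in \bS^{m-1}$. Fix a large ball $B_R \supset \textup{spt}\, S$ and a bump $\chi \in C_c^\infty(B_{2R})$ with $\chi = 1$ on $B_R$. Set $\tilde S := S + \tau^2 \chi\, e + \tau^2 w$, where $w$ is a small constant vector. By Sard's theorem applied to $S + \tau^2\chi e$ on $B_{2R}$, for a.e.\ $w$ the value $-\tau^2 w$ is regular, so $\tilde S$ has only nondegenerate zeros there. Outside the region where $\chi$ varies, $\tilde S$ is a small constant field with direction close to $e$. I would choose $w$ so small that this direction stays close to $e$, and then rotate it to exactly $e$ far away, using the fact that $S = 0$ there. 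Since the zeros of $\tilde S$ are nondegenerate and lie in a compact set, they are finitely many; call this set $F_\tau$. Moreover $\tilde S - S$ and $\textup{div}(\tilde S - S)$ are $O(\tau^2)$ in $L^1$ on the compact support. To keep everything compactly supported, I multiply at the end by a cutoff $\eta$ that tends to $1$ on larger and larger balls as $\tau \downarrow 0$, with $|\nabla \eta| \lesssim 1/R_\tau$. The resulting error in the divergence is at most $\tau^2 \cdot R_\tau^{m-1}$, which is small if $R_\tau$ is chosen suitably.

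\textbf{Step 2 (model singularities).} At each $p \in F_\tau$, $D\tilde S(p)$ is invertible. Using the polar decomposition $D\tilde S(p) = Q_p P_p$, I homotope $\tilde S$ inside a tiny ball $B_{r}(p)$ to the field $\phi(|x-p|)\,Q_p(x-p)$. This works because the linear path from $P_p$ to the identity stays through positive definite matrices, so it creates no new zeros. The modification costs $O(r^{m})$ in $L^1$ for the field and $O(r^{m-1}\cdot r^{0})$ for the divergence, which is negligible. Normalizing then gives $U_\tau(x) = Q_p\frac{x-p}{|x-p|}$ near $p$.

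\textbf{Step 3 (factorization and the case $m=2$).} Define $U_\tau := \tilde S/|\tilde S|$ on $\bR^m \setminus F_\tau$ and $\eta_\tau := |\tilde S|$. The function $\eta_\tau$ is nonnegative but may fail to be smooth where $\tilde S$ vanishes. Since $\tilde S$ vanishes only on $F_\tau$, it is smooth on $\bR^m \setminus F_\tau$. Near each $p$ I choose $\phi$ smooth, even in $r$, and vanishing to infinite order at $0$ (for example $\phi(r) = e^{-1/r^2}$), so that $\eta_\tau$ is smooth at $p$. Then $S_\tau := \eta_\tau U_\tau$ has properties (i) and (ii).

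For $m=2$, index $+1$ gives $Q_p \in SO(2)$. Composing with an additional rotation in the annulus, which is homotopic through rotations and introduces no zeros, I can take $Q_p = i$. Similarly, for index $-1$ I can take $Q_p$ to be $i$ composed with complex conjugation.

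The statement that all but finitely many maximal integral curves avoid $F_\tau$ and exist for all time I would argue as follows. Near $p$ the curves of $i\frac{z-p}{|z-p|}$ are circles when the index is $+1$. When the index is $-1$, the field is a saddle, and exactly four separatrices hit $p$. Away from a compact set $U_\tau = e$, and $U_\tau$ is a bounded smooth unit field, so integral curves cannot blow up in finite time except by reaching $F_\tau$. Only the finitely many separatrices do so.

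\textbf{Main obstacle.} The hardest point is the index $-1$ check that only finitely many trajectories enter $p$, together with ensuring that $\eta_\tau$ is smooth. I would handle both by the explicit model chosen in Step 2.
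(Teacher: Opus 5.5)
Your proof is correct but takes a different route from the paper. The paper never moves $S$ where it is large. It sets $S_\tau=\chi(|S|/\tau)S$ and $\eta_\tau=\chi(|S|/\tau)|S|$ with $\chi\equiv 0$ on $[0,3]$, so $\eta_\tau$ vanishes on $\{|S|\le 3\tau\}$; this set contains a neighborhood of the zero set and the complement of a compact set. The direction field is then built independently: it equals $S/|S|$ on $\{|S|\ge 3\tau\}$ and $e$ on $\{|S|\le\tau\}$, is made transverse to $0$ by a Sard perturbation supported in the transition region $\{\tau<|S|<3\tau\}$, and is put in the model form $Q_p\frac{x-p}{|x-p|}$ via Hopf's theorem. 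Every modification happens where $\eta_\tau=0$, so the identity $S_\tau=\eta_\tau U_\tau$ and the smoothness of $\eta_\tau$ come for free. The price is that $\textup{div}\,S_\tau\to\textup{div}\,S$ needs dominated convergence together with $DS=0$ a.e. on $\{S=0\}$, and gives no rate.

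You instead perturb $S$ itself so that its zeros are nondegenerate, and take $\eta_\tau=|\tilde S|$. This gives explicit $O(\tau^2)$ convergence, replaces Hopf's theorem by the polar decomposition of $D\tilde S(p)$, and keeps $\eta_\tau>0$ off $F_\tau$ inside the cutoff ball. The cost is that you must alter $S_\tau$ near each zero and insert a flat radial factor such as $e^{-1/|x-p|^2}$ so that $|\tilde S|$ is smooth at $p$. You must also check that the interpolation creates no zeros; it has to absorb the quadratic remainder of $\tilde S$ at $p$, and it does so for small $r$.

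There are two minor slips. First, outside $B_{2R}$ your field is $\tau^2 w$, whose direction $w/|w|$ is arbitrary rather than close to $e$. This is harmless, since you rotate it to $e$ anyway; alternatively, use $\tau^2 e+\tau^2\chi w$ with $|w|<1$ to get $U_\tau=e$ outside $B_{2R}$ directly. Second, the final cutoff costs $\tau^2|w|\,|B_{R_\tau}|$ in $L^1$ for the field itself, not only for the divergence. So keep $R_\tau$ fixed, or require $\tau^2R_\tau^m\to 0$.
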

\begin{proof}
    We fix a smooth function $\chi: [0,\infty) \to [0,1]$ with $\chi \equiv 0$ on $[0,3]$ and $\chi \equiv 1$ on $[9,\infty)$, and define $S_{\tau} := \chi \bigl( \frac{|S|}{\tau} \bigr) S$ and $\eta_{\tau} := \chi \bigl( \frac{|S|}{\tau} \bigr) |S|$.
    Because $\chi$ vanishes near zero, these functions are smooth even though $|S|$ itself need not be smooth on $\{ S=0 \}$.
    Moreover, $S_{\tau} \to S$ in $L^1$, and
    \[
    \textup{div} \, S_{\tau} = \chi \bigl( \tfrac{|S|}{\tau} \bigr) \, \textup{div} \, S + \tfrac{1}{\tau} \chi' \bigl( \tfrac{|S|}{\tau} \bigr) \la D |S|, S \rg.
    \]
    The second term is supported on $\{ 3 \tau < |S| < 9 \tau \}$ and is bounded by $\tau^{-1} |\chi'| \, |D S| \, |S| \leq C |D S|$, so
    \[
    |\textup{div} \, S_{\tau} - \textup{div} \, S| \leq C |\nabla S| \, \mathbf{1}_{ \{ |S| < 9 \tau \} }
    \]
    up to a dimensional constant.
    Since $D S=0$ $\cL^m$-a.e.~on $\{ S= 0\}$, we can apply the dominated convergence theorem to obtain $\textup{div} \, S_{\tau} \to \textup{div} \, S$ in $L^1$.

    We now define, at a first step,
    \[
    W^0_{\tau} := \chi \Bigl( \frac{|S|}{\tau/3} \Bigr) \, \frac{S}{|S|} + \Bigl[ 1 - \chi \Bigl( \frac{|S|}{\tau/3} \Bigr) \Bigr] \, e
    \]
    where the first term is interpreted as zero whenever the cutoff vanishes.
    This function is again smooth, for the same reason as $S_{\tau}, \eta_{\tau}$ above.
    Moreover, $W^0_{\tau} = \frac{S}{|S|}$ on $\{ |S| \geq 3 \tau \}$ and $W^0_{\tau} = e$ on $\{ |S| \leq \tau \}$.
    Since $\textup{spt} \, \eta_{\tau} \subset \{ |S| \geq 3 \tau \}$, we can modify $W^0_{\tau}$ in the transition region $ \{ \tau < |S| < 3 \tau \}$ without changing $S_{\tau}$.
    By transversality, we can take an arbitrarily small smooth perturbation $W_{\tau}$ of $W^0_{\tau}$, supported in this transition region, to arrange that $0$ is a regular value of $W_{\tau}$.
    Indeed, let $F^0_{\tau} := (W^0_{\tau})^{-1}(0)$, which is compactly contained in the transition region $\{ \tau < |S| < 3 \tau \}$ since $W^0_{\tau} \neq 0$ outside it.
    We can choose a $\psi \in C_c^{\infty} ( \{ \tau < |S| < 3 \tau \})$ with $\psi \equiv 1$ near $F^0_{\tau}$ and set $W_{\tau} = W_{\tau}^0 + \psi a$ for a sufficiently small $a \in \bR^m$ for which $-a$ is a regular value of $W^0_{\tau}$ on the neighborhood where $\psi = 1$; this is possible by Sard's theorem.
    For $|a|$ sufficiently small, no new zeroes occur where $\psi \neq 1$, because $W^0_{\tau}$ is uniformly bounded away from zero there.
    Hence, $0$ is a regular value of $W_{\tau}$.
    Since $W_{\tau} = e$ outside a compact set, the set $F_{\tau} := W^{-1}_{\tau}(0)$ is finite.

    Finally, we set $U_{\tau} := \frac{W_{\tau}}{|W_{\tau}|}$ on $\bR^m \setminus F_{\tau}$, so $U_{\tau} = \frac{S}{|S|}$ on $\textup{spt} \, \eta_{\tau}$, by construction.
    Therefore, $S_{\tau} = \eta_{\tau} U_{\tau}$ as desired.
    Also, every $p \in F_{\tau}$ is a non-degenerate zero, so the restriction of $U_{\tau}$ to a sufficiently small sphere centered at $p$ has degree $\textup{sgn} \, \det D W_{\tau}(p) \in \{ \pm 1 \}$.
    We choose $Q_p \in O(m)$ to have the same determinant as $DW_{\tau}(p)$.
    Because maps $\bS^{m-1} \to \bS^{m-1}$ of the same degree are homotopic, we can modify $U_{\tau}$ in a small annulus near $p$, and disjoint from $\textup{spt} \, \eta_{\tau}$, so that $U_{\tau}(x) = Q_p \frac{x-p}{|x-p|}$ near $p$.
    This preserves the relation $S_{\tau} = \eta_{\tau} U_{\tau}$ and satisfies the desired properties.

    In dimension $m=2$, we can choose the orthogonal normal forms in the manner specified above,
    \[
    Q_p = J = \begin{pmatrix}
        0 & -1 \\
        1 & 0 
    \end{pmatrix} \quad \text{if $p$ has index $+1$}, \qquad Q_p = J \begin{pmatrix}
        1 & 0 \\
        0 & -1
    \end{pmatrix} \quad \text{if $p$ has index $-1$}.
    \]
    The resulting expressions $U_{\tau}(x) = Q_p \frac{x-p}{|x-p|}$ have precisely the form $i \frac{z-p}{|z-p|}$ or $i \frac{\overline{z-p}}{|z-p|}$ in complex notation, respectively, when $p$ has index $+1$ or $-1$.
    In the first case, for $U_{\tau}(z) = i \frac{z-p}{|z-p|}$, the trajectories near $p$ are circles, so none reaches $p$.
    For $U_{\tau}(z) = i \frac{\overline{z-p}}{|z-p|}$, we can translate to $p=0$ and write $U(x_1, x_2) = \frac{(x_2, x_1)}{\sqrt{x_1^2 + x_2^2}}$ with $\frac{d}{dt} (x_1^2-x_2^2) = 0$.
    Thus, a trajectory can reach $0$ only if $x_1^2 - x_2^2 = 0$, namely along one of the lines $x_2 = \pm x_1$.
    Since there are finitely many points $p \in F_{\tau}$, only finitely many trajectories are excluded.
\end{proof}

\subsection{The smallest eigenvalue functional and projected stresses}\label{section:smallest-eigenvalue}

Let $P$ be an $m$-dimensional Euclidean space.
For $A \in \textup{End}(P)$, we will consider the smallest eigenvalue function
\begin{equation}\label{eqn:smallest-eigenvalue}
    \lambda_m(A) := \lambda_{\min} ( \textup{sym} \, A), \qquad \text{where } \; \textup{sym} \, A := \tfrac{1}{2} (A + A^t).
\end{equation}
Equivalently, $\lambda_m(A) = \min_{ |v| = 1 } \la A v , v \rg$.
We record the following standard properties.
\begin{lemma}\label{lemma:lambda-m-properties}
    The function $\lambda_m$ is concave, positively one-homogeneous, and superadditive on matrices.
    Moreover, $|\lambda_m(A) - \lambda_m(B)| \leq \|A - B\|_{\textup{op}}$ and $\lambda_m (R^t AR) = \lambda_m(A)$ for every $R \in O(P)$.
\end{lemma}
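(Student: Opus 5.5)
The plan is to derive every property from the variational characterization $\lambda_m(A) = \min_{|v|=1} \la A v, v \rg$. First I will check that this agrees with the definition \eqref{eqn:smallest-eigenvalue}. For any $v$ we have $\la A v, v \rg = \la (\textup{sym}\, A) v, v\rg$, because the antisymmetric part contributes $\la (A-A^t)v,v\rg/2 = 0$. The Rayleigh quotient characterization for the symmetric matrix $\textup{sym}\, A$ then gives the formula.

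Once this representation is in hand, $\lambda_m$ is a pointwise infimum, over the compact set $\{|v|=1\}$, of the linear functionals $A \mapsto \la Av,v\rg$. Such an infimum of linear functions is automatically concave. It is also positively one-homogeneous, since for $t \geq 0$ we have $\min_v \la tAv,v\rg = t \min_v \la Av,v\rg$. Superadditivity follows from the fact that the minimum of a sum is at least the sum of the minima:
\[
\lambda_m(A+B) = \min_{|v|=1} \bigl( \la Av,v\rg + \la Bv,v\rg \bigr) \geq \lambda_m(A) + \lambda_m(B).
\]
Alternatively, superadditivity follows from concavity together with homogeneity, writing $A + B = 2 \cdot \tfrac12(A+B)$.

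For the Lipschitz bound, I will apply superadditivity to $A = B + (A-B)$. This gives $\lambda_m(A) \geq \lambda_m(B) + \lambda_m(A-B)$. Next, $\lambda_m(A-B) \geq -\|A-B\|_{\textup{op}}$, because $|\la (A-B)v,v\rg| \leq \|A-B\|_{\textup{op}}$ for every unit $v$. Exchanging the roles of $A$ and $B$ gives the other inequality, and together they give the two-sided estimate.

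For orthogonal invariance, I will use $\la R^tARv,v\rg = \la A(Rv),Rv\rg$. The map $v \mapsto Rv$ is a bijection of the unit sphere of $P$ whenever $R \in O(P)$, so the two minima coincide. There is no real obstacle in this argument. The only point needing care is the reduction to the symmetric part, which justifies using the Rayleigh quotient for a possibly non-symmetric endomorphism.
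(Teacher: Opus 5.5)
Your argument is correct and complete. The paper states this lemma as a list of standard facts without a written proof, but it records the variational formula $\lambda_m(A)=\min_{|v|=1}\la Av,v\rg$ immediately beforehand, and deriving all five properties from that formula, after checking that the skew part drops out of $\la Av,v\rg$, is exactly the intended route.
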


Given a finite $\textup{End}(P)$-valued measure $\mathbf{A}$, let $\sigma$ be any positive measure dominating the entries of $\mathbf{A}$.
We consider the Radon-Nikodym derivative $a := \frac{d \mathbf{A}}{d \sigma}$ and define the functional 
\[
\Lambda_m(\mathbf{A}) := \int \lambda_m(a)\,d\sigma , \qquad a := \frac{d \mathbf{A}}{d \sigma}
\]
which is independent of the choice $\sigma$, due to the $1$-homogeneity of $\lambda_m$.
More generally, let
\begin{equation}\label{eqn:lambda-m-beta}
    \lambda_{m , \beta}(A) := \lambda_m(A) - \beta \, \textup{tr}(A), \qquad \Lambda_{m, \beta} ( \mathbf{A}) := \int \lambda_{m,\beta}(a) \, d \sigma.
\end{equation}
Like $\lambda_m$, the function $\lambda_{m,\beta}$ is concave, positively one-homogeneous, superadditive, invariant under conjugation by orthogonal matrices, and Lipschitz with respect to the operator norm.

Given an $m$-plane $P \in \bG(N,m)$, we fix an isometry $E_P : \bR^m \to P$ and let $\pi_P := E_P^t : \bR^N \to \bR^m$.
We define the projected stress measure $\mathbf{A}_P \in \cM( \bR^m ; \bR^{m \times m})$ by
\begin{equation}\label{eqn:projected-stress-measure}
\begin{split}
    \mathbf{A}_P &:= (\pi_P)_{\#} \bigl[ E_P^t B_{\Psi}(T) E_P \, d V(x,T) \bigr], \qquad \textup{so that} \\
    \int_{\bR^m} \la \varphi(y), d \mathbf{A}_P(y) \rg &\;= \int_{\bR^N \times \bG(N,m)} \la \varphi(\pi_P x), E^t_P B_{\Psi}(T) E_P \rg \, d V(x,T)
\end{split}
\end{equation}
for $\varphi \in C_c(\bR^m ; \bR^{m \times m})$.
Replacing $E_P$ by $E_P R$ for some $R \in O(m)$ conjugates the matrix measure by an orthogonal map, so the corresponding measure is obtained from $\mathbf{A}_P$ by simultaneously rotating the domain and conjugating its values.
In particular, all quantities below involving total variation, divergence, or $\Lambda_{m,\beta}$ are independent of the choice of $E_P$.

For a rectifiable varifold $V$ satisfying $\Theta^m ( \|V\|, x) \geq \theta_0$ for $\|V\|$-a.e.~$x$, we can write $V = v(M, \theta)$ where $\theta \geq \theta_0$ $\cH^m$-a.e.~on the rectifiable set $M$.
Then, $\mathbf{A}_P$ is concentrated on $\pi_P(M)$, and
\[
\cL^m ( \pi_P(M)) \leq \int_M J_m (\pi_P|_{T_x M}) \, d \cH^m(x) \leq \cH^m(M)
\]
by the area formula.
Since $\pi_P$ is continuous, while $M$ is rectifiable and may be chosen Borel, the set $\pi_P(M)$ is Lebesgue measurable.
Thus, we can choose a Borel set $G_P \supset \pi_P(M)$ such that $\mathbf{A}_P$ is concentrated on $G_P$ and $\cL^m (G_P) = \cL^m(\pi_P(M)) \leq \cH^m(M)$.

For a matrix-valued measure $\mathbf{A}$ on $\bR^m$, we define its row-wise distributional divergence by
\[
\la \textup{Div} \, \mathbf{A} , X \rg := - \int_{\bR^m} \la \mathbf{A}, DX \rg, \qquad \text{for } \; X \in C_c^1(\bR^m; \bR^m).
\]
\begin{lemma}\label{lemma:A-P-projection-properties}
Suppose that $\|V\|(\bR^N)<\infty$ and that $V$ has finite anisotropic first variation with respect to an anisotropy $\Psi$ satisfying $\sup_{T \in \bG(N,m)} \|B_{\Psi}(T) - T \| \leq C_N$.
Then, for every $P \in \bG(N,m)$, the measure $\textup{Div} \, \mathbf{A}_P$ has finite total variation and
    \[
    |\textup{Div} \, \mathbf{A}_P| (\bR^m) \leq |\delta_{\Psi} V|(\bR^N), \qquad |\mathbf{A}_P|(\bR^m) \leq C_N \, \|V\|(\bR^N).
    \]
\end{lemma}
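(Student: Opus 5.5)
Both bounds follow by testing $\textup{Div}\,\mathbf{A}_P$ against lifted vector fields in the first variation formula~\eqref{eqn:first-variation} and by bounding the integrand pointwise.

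\textbf{Divergence bound.} Fix $X \in C_c^1(\bR^m;\bR^m)$ and define the vector field $g := E_P \, X \circ \pi_P$ on $\bR^N$. Since $\pi_P = E_P^t$, the chain rule gives
\[
Dg(x) = E_P \, DX(\pi_P x) \, E_P^t .
\]
By the definition~\eqref{eqn:projected-stress-measure},
\[
\la \textup{Div}\,\mathbf{A}_P, X\rg = -\int \la DX(\pi_P x), E_P^t B_\Psi(T) E_P\rg \, dV(x,T) = -\int \la E_P\, DX(\pi_P x)\, E_P^t, B_\Psi(T)\rg \, dV = -[\delta_\Psi V](g),
\]
where the middle equality uses $\la M, E^t B E\rg = \la E M E^t, B\rg$.

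The field $g$ is not compactly supported, since it is constant along $P^\perp$. We handle this with a cutoff. Let $g_R := \zeta(|x|/R)\, g$ for a standard cutoff $\zeta$. The error term is
\[
\int \la B_\Psi(T), g \otimes \nabla\zeta_R\rg \, dV ,
\]
which is bounded by $C\|X\|_{C^0} R^{-1}\|V\|(\bR^N)$, using $\|B_\Psi\| \leq C_N$, and so tends to $0$. The main term $\int \zeta_R \la B_\Psi, Dg\rg \, dV$ converges to $\int \la B_\Psi, Dg\rg \, dV$ by dominated convergence, because $\|V\|$ is finite. On the other hand, $[\delta_\Psi V](g_R) \leq |\delta_\Psi V|(\bR^N)\,\sup|g_R|$ and $\sup|g_R| \leq \|X\|_{C^0}$. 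Hence $|\la \textup{Div}\,\mathbf{A}_P, X\rg| \leq |\delta_\Psi V|(\bR^N)\,\|X\|_{C^0}$. Taking the supremum over $\|X\|_{C^0}\leq 1$ shows that $\textup{Div}\,\mathbf{A}_P$ is a measure with the stated bound.

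\textbf{Mass bound.} The total variation of a pushforward is at most the pushforward of the total variation. Since $E_P$ is an isometry, $|E_P^t B_\Psi(T) E_P| \leq |B_\Psi(T)|$ in Frobenius norm, and $|B_\Psi(T)| \leq \sqrt{N}\,\|B_\Psi(T)\|_{\rm op} \leq C_N$. Therefore $|\mathbf{A}_P|(\bR^m) \leq C_N\|V\|(\bR^N)$.

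The only delicate step is justifying the first-variation identity for the non-compactly supported field $g$, and the cutoff argument above handles it.
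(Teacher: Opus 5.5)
Your proposal is correct and follows the paper's proof essentially step for step. Both arguments lift $X$ to $E_P X\circ\pi_P$, cut off at scale $R$, and send the $\tilde{X}\otimes D\eta_R$ error to zero using $\|V\|(\bR^N)<\infty$ and boundedness of $B_\Psi$. Both then pass to the limit in the main term by dominated convergence and bound the mass by compression under $E_P$ together with the pushforward inequality. Measuring $E_P^tB_\Psi E_P$ in Frobenius norm rather than operator norm times a dimensional constant is only cosmetic.
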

\begin{proof}
Let us write $C'_N := \sup_{T \in \bG(N,m)} \|B_{\Psi}(T) \|_{\textup{op}} < \infty$.
For every $X \in C_c^1(\bR^m ; \bR^m)$, the lift $\tilde{X}(x) := E_P X( \pi_P x)$ is bounded and constant along the directions in $P^{\perp}$.
We choose cutoff functions $\eta_R \in C_c^1(\bR^N)$ with $0 \leq \eta_R \leq 1$ and $\eta_R \equiv 1$ on $B_R$, $\eta_R \equiv 0$ outside $B_{2R}$, and $|\nabla \eta_R| \leq CR^{-1}$.
Let
\[
X_R := \eta_R \tilde{X} \in C_c^1(\bR^N; \bR^N), \qquad D \tilde{X}(x) = E_P \, DX( \pi_P x) \, E^t_P.
\]
Thus, the first variation formula for the vector field $X_R$ gives
\begin{equation}\label{eqn:delta-Psi-V-XR}
    \begin{split}
        [\delta_{\Psi} V](X_R) &= \int \eta_R(x) \la B_{\Psi}(T), E_P \, DX (\pi_P x) \, E^t_P \rg \, dV(x,T) \\
        & \quad + \int \la B_{\Psi}(T), \tilde{X}(x) \otimes D \eta_R(x) \rg \, dV(x,T).
    \end{split}
\end{equation}
The second term in the above expression satisfies $|\int \la B_{\Psi}(T), \tilde{X} \otimes D \eta_R \rg \, dV| \leq CC'_N R^{-1} \|X\|_{L^{\infty}} \|V\|(\bR^N)$, and hence tends to zero as $R \to \infty$.
For the first term above, the definition~\eqref{eqn:projected-stress-measure} of $\mathbf{A}_P$ and an application of the dominated convergence theorem give
\[
\lim_{R \to \infty} \int \eta_R(x) \la B_{\Psi}(T), E_P \, DX(\pi_P x) \, E^t_P \rg \, dV(x,T) = \int_{\bR^m} \la DX(y), d \mathbf{A}_P(y) \rg.
\]
On the other hand, 
\[
|\delta_{\Psi} V (X_R)| \leq \| X_R \|_{L^{\infty}} |\delta_{\Psi} V|(\bR^N) \leq \|X \|_{L^{\infty}} |\delta_{\Psi} V| (\bR^N).
\]
Passing to the limit as $R \to \infty$ in~\eqref{eqn:delta-Psi-V-XR} therefore yields
\[
\Bigl| \int_{\bR^m} \la D X, d \mathbf{A}_P \rg \Bigr| \leq \|X \|_{L^{\infty}} |\delta_{\Psi} V|(\bR^N).
\]
This means that the distribution $X \mapsto \la \mathbf{A}_P , X \rg := - \int_{\bR^m} \la D X, d \mathbf{A}_P \rg$ is of order zero, and hence represented by a finite $\bR^m$-valued Radon measure.
Taking the supremum over $X \in C_c^1(\bR^m; \bR^m)$ with $\|X\|_{L^{\infty}}$ proves the claimed bound $|\textup{Div} \, \mathbf{A}_P|(\bR^m) \leq |\delta_{\Psi} V|(\bR^N)$.

Finally, since compression by $E_P$ does not increase the operator norm, we have $\|E^t_P B_{\Psi}(T) E_P \| \leq C'_N$.
Consequently, by the definition of the pushforward measure,
\[
|\mathbf{A}_P| (\bR^m) \leq C_m \int_{\bR^N \times \bG(N,m)} \| E^t_P B_{\Psi}(T) E_P \| \, d V(x,T) \leq C_m C'_N \|V\|(\bR^N)
\]
for a dimensional constant $C_m$.
This completes the proof.
\end{proof}

We next record a consequence of the concavity of the function $\lambda_{m,\beta}$.
In particular, contributions from different
sheets of $V$ lying over the same point of the projection giving rise to $\mathbf{A}_P$ can only improve
the corresponding spectral functional.
\begin{lemma}\label{lemma:concavity-AP}
For every $m$-plane $P \in \bG(N,m)$, it holds that
    \[
    \Lambda_{m, \beta} ( \mathbf{A}_P) \geq \int_{\bR^N \times \bG(N,m)} \lambda_{m,\beta}( E^t_P B_{\Psi}(T) E_P) \, d V(x,T).
    \]
\end{lemma}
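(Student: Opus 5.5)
The plan is to disintegrate the varifold measure along the fibres of the projection $\pi_P$ and then apply Jensen's inequality for the concave, positively one-homogeneous function $\lambda_{m,\beta}$ on each fibre.

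Set $\nu := (\pi_P)_{\#} V$, the pushforward of $V$ under $(x,T) \mapsto \pi_P x$. This is a finite positive measure on $\bR^m$. Because $\|E_P^t B_\Psi(T) E_P\| \leq C'_N$, the definition~\eqref{eqn:projected-stress-measure} shows that every entry of $\mathbf{A}_P$ is dominated by $C \nu$. We may therefore take $\sigma = \nu$ in the definition of $\Lambda_{m,\beta}$, since that definition is independent of the dominating measure.

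By the disintegration theorem there are probability measures $V_y$ on $\pi_P^{-1}(y) \times \bG(N,m)$, defined for $\nu$-a.e.~$y$, such that $V = \int V_y \, d\nu(y)$. The density is then
\[
a(y) = \frac{d\mathbf{A}_P}{d\nu}(y) = \int E_P^t B_\Psi(T) E_P \, dV_y(x,T)
\]
for $\nu$-a.e.~$y$. To check this, test~\eqref{eqn:projected-stress-measure} against $\varphi \in C_c$ and use Fubini for the disintegration; the integrand is bounded, so everything is integrable.

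Since $\lambda_{m,\beta}$ is concave and continuous, Jensen's inequality for the probability measure $V_y$ gives
\[
\lambda_{m,\beta}(a(y)) \geq \int \lambda_{m,\beta}\bigl(E_P^t B_\Psi(T) E_P\bigr) \, dV_y(x,T).
\]
Integrating in $\nu$ and recombining the disintegration yields
\[
\Lambda_{m,\beta}(\mathbf{A}_P) = \int \lambda_{m,\beta}(a) \, d\nu \geq \int \lambda_{m,\beta}\bigl(E_P^t B_\Psi(T) E_P\bigr) \, dV,
\]
which is the claim.

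Both sides are finite because $\lambda_{m,\beta}$ is Lipschitz, vanishes at $0$, and its argument is bounded. If one prefers to avoid Jensen for matrix-valued integrals, an alternative is to approximate the inner integral by finite convex combinations and use superadditivity together with one-homogeneity, then pass to the limit using the Lipschitz continuity of $\lambda_{m,\beta}$.

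The only delicate point is measure-theoretic: justifying the disintegration and the formula for $a(y)$. Both are standard, since $V$ is a finite Radon measure on a Polish space, so I expect no real obstacle.
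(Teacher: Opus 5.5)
Your proposal is correct and follows essentially the same route as the paper. The paper also disintegrates $V$ along the fibres of $\tilde{\pi}_P(x,T)=\pi_P x$ with respect to $\mu_P=(\tilde{\pi}_P)_{\#}V$, identifies $\frac{d\mathbf{A}_P}{d\mu_P}(y)$ as the fibre average of $E_P^t B_{\Psi}(T) E_P$, uses $\mu_P$ as the dominating measure, and then applies Jensen's inequality for the concave function $\lambda_{m,\beta}$.
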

\begin{proof}
Consider the projection $\tilde{\pi}_P : \bR^N \times \bG(N,m) \to \bR^m$ given by $\tilde{\pi}_P(x,T) := \pi_P x$, and let $\mu_P := (\pi_P)_{\#} \|V\| = (\tilde{\pi}_P)_{\#} V$.
By the disintegration theorem for rectifiable varifolds~\cite{simon-gmt}*{Ch.~8}, we can find a $\mu_P$-measurable family of probability measures $\{ \sigma_{P,y}\}_{y \in \bR^m}$, with $\sigma_{P,y}$ concentrated on $\tilde{\pi}^{-1}_P(y)$ for $\mu_P$-a.e.~$y$, such that
\begin{equation}\label{eqn:disintegration-of-measures}
    \int f(x,T) \, dV(x,T) = \int_{\bR^m} \Bigl( \int f(x,T) \, d \sigma_{P,y}(x,T) \Bigr) \, d\mu_P(y)
\end{equation}
for every $V$-integrable Borel function $f$.
It follows from the definition~\eqref{eqn:projected-stress-measure} of $\mathbf{A}_P$ and Lemma~\ref{lemma:A-P-projection-properties} that $\mathbf{A}_P \ll \mu_P$, and $\mu_P$-a.e.~$y$ satisfies
\begin{equation}\label{eqn:density-AP-muP}
    \frac{d \mathbf{A}_P}{d \mu_P}(y) = \int_{\tilde{\pi}^{-1}_P(y)} E^t_P \, B_{\Psi}(T) \, E_P \, d \sigma_{P,y}(x,T).
\end{equation}
Indeed, this follows from testing the right-hand side against a compactly supported matrix-valued function and using~\eqref{eqn:disintegration-of-measures}.
Since the definition of $\Lambda_{m,\beta}$ is independent of the choice of dominating measure, and $\mathbf{A}_P \ll \mu_P$, we can use $\mu_P$ in the definition~\eqref{eqn:lambda-m-beta}.
Because $\lambda_{m,\beta}$ is concave and $\sigma_{P,y}$ is a probability measure, Jensen's inequality gives
\begin{align*}
    \Lambda_{m,\beta} ( \mathbf{A}_P) &= \int_{\bR^m} \lambda_{m, \beta} \Bigl( \int E^t_P B_{\Psi}(T) E_P \, d \sigma_{P,y}(x,T) \Bigr) \, d \mu_P(y) \\
    &\geq \int_{\bR^m} \int \lambda_{m,\beta}(E^t_P B_{\Psi}(T) E_P) \, d \sigma_{P,y}(x,T) \, d\mu_P(y) \\
    &= \int_{\bR^N \times \bG(N,m)} \lambda_{m, \beta} (E^t_P B_{\Psi}(T) E_P) \, d V(x,T).
\end{align*}
This completes the proof of our assertion.
\end{proof}

We will study the average of the functional $\Lambda_{m,\beta} ( \mathbf{A}_P)$ over projections onto $m$-planes in $\bG(N,m)$.
We denote by $dP$ the normalized Haar measure on $\bG(N,m)$ and define, for fixed $T \in \bG(N,m)$,
\begin{equation}\label{eqn:d-N-m-beta}
    a_{N,m} := \int_{\mathbb{G}(N,m)} \lambda_{m}(E^t_P T E_P) \, dP, \qquad T \in \mathbb{G}(N,m).
\end{equation}
This quantity is independent of the choice of $T$ because the action of $O(N)$ on $\mathbb{G}(N,m)$ is transitive, while the functional $\lambda_{m}$ is invariant under orthogonal conjugation.
The expression is independent of the choice of isometry $E_P: \bR^m \to P$, so it defines a measurable function of $P$.
\begin{lemma}\label{lemma:averaged-coercivity}
    Suppose that $\sup_{T \in \bG(N,m)} \|B_{\Psi}(T) - T \| \leq \ve$.
    Then, it holds that
    \[
    \int_{\bG(N,m)} \Lambda_{m, \beta} ( \mathbf{A}_P) \, dP \geq ( a_{N,m} - \beta N^{-1} m^2 - (1+m \beta) \ve ) \, \|V\|(\bR^N).
    \]
\end{lemma}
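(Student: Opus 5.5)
Integrating Lemma~\ref{lemma:concavity-AP} over $P$ against the Haar measure and using Fubini reduces the claim to a pointwise bound: it suffices to show that for every fixed $T \in \bG(N,m)$,
\[
\int_{\bG(N,m)} \lambda_{m,\beta}(E^t_P B_{\Psi}(T) E_P) \, dP \geq a_{N,m} - \beta N^{-1} m^2 - (1+m\beta)\ve .
\]
Integrating this in $dV(x,T)$ then gives the factor $\|V\|(\bR^N)$. Measurability in $(P,x,T)$ is harmless, since the integrand is continuous in $P$ and Borel in $T$.

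For the pointwise bound, split $B_{\Psi}(T) = T + (B_{\Psi}(T) - T)$ and treat the $\lambda_m$ part and the trace part separately.

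\emph{The $\lambda_m$ part.} By Lemma~\ref{lemma:lambda-m-properties}, $\lambda_m$ is $1$-Lipschitz in operator norm. Compression by the isometry $E_P$ does not increase operator norm, so
\[
\lambda_m(E^t_P B_{\Psi}(T) E_P) \geq \lambda_m(E^t_P T E_P) - \ve .
\]
Averaging over $P$ and using the definition~\eqref{eqn:d-N-m-beta}, this part contributes at least $a_{N,m} - \ve$.

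\emph{The trace part.} Write
\[
\textup{tr}(E^t_P B_{\Psi}(T) E_P) = \textup{tr}(E^t_P T E_P) + \textup{tr}(E^t_P (B_{\Psi}(T)-T) E_P).
\]
The second term is a trace of an $m\times m$ matrix of operator norm at most $\ve$, so it is bounded by $m\ve$; with the factor $-\beta$ this produces the loss $m\beta\ve$. For the first term I will compute the Haar average exactly. Write $E_P E_P^t = P$ as an orthogonal projection, so $\textup{tr}(E^t_P T E_P) = \textup{tr}(TP) = \la T, P\rg$. By $O(N)$-invariance of Haar measure, $\int P \, dP$ commutes with all orthogonal matrices, so it is a multiple of the identity; taking traces gives $\int P\, dP = \frac{m}{N}\,\textup{Id}$. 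Hence
\[
\int_{\bG(N,m)} \textup{tr}(E^t_P T E_P)\, dP = \frac{m}{N}\,\textup{tr}\, T = \frac{m^2}{N},
\]
which gives exactly the term $\beta N^{-1} m^2$.

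Adding the two parts yields the pointwise bound. I expect no real obstacle. The only points needing care are the invariance computation $\int P\,dP = \frac{m}{N}\textup{Id}$ and the observation that $\beta \geq 0$ is implicitly used for the sign in the trace error; if $\beta$ could be negative, replace $\beta$ by $|\beta|$ there. Otherwise the estimate is a direct combination of Lemma~\ref{lemma:concavity-AP}, the Lipschitz property from Lemma~\ref{lemma:lambda-m-properties}, and Fubini.
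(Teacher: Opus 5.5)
Your proposal is correct and is essentially the paper's argument. The paper also computes $\int_{\bG(N,m)} \textup{tr}(E^t_P T E_P)\,dP = \frac{m^2}{N}$ from $\int P\,dP = \frac{m}{N} I_N$, uses that $\lambda_{m,\beta}$ is $(1+m\beta)$-Lipschitz in operator norm together with $\|E^t_P(B_{\Psi}(T)-T)E_P\| \leq \ve$ (you split this Lipschitz bound into its $\lambda_m$ and trace parts), and then integrates Lemma~\ref{lemma:concavity-AP} over $P$ using Fubini. Your remark that $\beta \geq 0$ is implicitly needed applies equally to the paper's constant $(1+m\beta)$; it holds in the application, where $\beta = \frac{4-\pi}{8}$.
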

\begin{proof}
Note that $\textup{tr}(E^t_P T E_P) = \textup{tr}(TP)$, and the rotational invariance of the Haar measure gives $\int_{\bG(N,m)} P \, dP = \frac{m}{N} I_N$.
Thus, we compute that
    \begin{align*}
    & \int_{\bG(N,m)} \textup{tr} (E^t_P T E_P) \, dP = \textup{tr} \Bigl( T \int_{\bG(N,m)} P \, dP \Bigr) = \tfrac{m}{N} \, \textup{tr} \, T = \tfrac{m^2}{N}, \\
    & \implies \int_{\bG(N,m)} \lambda_{m, \beta} (E^t_P T E_P) \, dP = a_{N,m} - \beta \tfrac{m^2}{N},
    \end{align*}
    for $a_{N,m}$ the quantity from~\eqref{eqn:d-N-m-beta}.
    The compression $E_P$ does not increase the operator norm, so
    \[
    \| E^t_P ( B_{\Psi}(T) - T) E_P \| \leq \| B_{\Psi}(T) - T \| \leq \ve.
    \]
    Also, the function $\lambda_m$ is $1$-Lipschitz in the operator norm, and $|\textup{tr} \, A| \leq m \|A\|$, so the function $\lambda_{m,\beta}$ is $(1+m\beta)$-Lipschitz.
    Therefore, rearranging terms and integrating over $P \in \bG(N,m)$ gives
    \begin{equation}\label{eqn:lambda-m-beta-inequality}
    \begin{split}
    \lambda_{m, \beta} (E_P^t T E_P ) - (1+m \beta) \ve &\leq \lambda_{m,\beta} (E^t_P B_{\Psi}(T) E_P) \\
    \implies a_{N,m} - \beta N^{-1} m^2 - (1+m \beta) \ve &\leq \int_{\bG(N,m)} \lambda_{m,\beta}(E^t_P B_{\Psi}(T) E_P) \, dP.
    \end{split}
    \end{equation}
    Next, applying Lemma~\ref{lemma:concavity-AP} to the concave, positively one-homogeneous function $\lambda_{m,\beta}$ shows that the integral of the right-hand side over $P \in \bG(N,m)$ is bounded from above by $\Lambda_{m, \beta}(\mathbf{A}_P)$, for Haar-a.e.~$P$.
    Integrating in $P$, applying Fubini's theorem, and using~\eqref{eqn:lambda-m-beta-inequality}, we therefore obtain
    \begin{align*}
        \int_{\bG(N,m)} \Lambda_{m, \beta} ( \mathbf{A}_P) \, dP &\geq \int_{\bG(N,m)} \Bigl(\int_{\bR^N \times \bG(N,m)} \lambda_{m, \beta} (E^t_P B_{\Psi}(T) E_P) \, dV(x,T) \Bigr) \, dP \\
        &\geq \int_{\bR^N \times \bG(N,m)} \Bigl( \int_{\bG(N,m)} \lambda_{m,\beta}(E^t_P B_{\Psi}(T) E_P) \, dP \Bigr) \, dV(x,T) \\
        &\geq ( a_{N,m} - \beta N^{-1}m^2 - (1+m\beta) \ve ) \, \|V\|(\bR^N)
    \end{align*}
    using the bound~\eqref{eqn:lambda-m-beta-inequality}.
    This completes the proof.
\end{proof}

For anisotropic $2$-varifolds, we introduce a functional $\Psi_2$ on $2 \times 2$-matrix-valued Radon measures.
This construction will enable us to control $2$-varifolds through their one-dimensional projections, while retaining the properties of $\lambda_{2,\beta}$ discussed above.
\begin{lemma}\label{lemma:2x2-matrix}
    Let $A$ be a $2 \times 2$ matrix with entries $(a_{ij})$.
    Then, we have
    \begin{equation}\label{eqn:psi(A)-equality}
\begin{split}
    \psi_2(A) &:= \min \{ ( a_{11} - |a_{12}|)_+ , (a_{22} - |a_{21}|)_+ \} - (a_{11} - |a_{12}|
    )_- - (a_{22} - |a_{21}|)_{_-} \\
    &= \min \{ a_{11} - |a_{12}|,  a_{22} - |a_{21} | , (a_{11} - |a_{12}|) + (a_{22} - |a_{21}|) \}.
\end{split}
\end{equation}
Moreover, $\psi_2$ is a concave and positively one-homogeneous function, with $\psi_2(A+B) \geq \psi_2(A) + \psi_2(B)$ for every pair of matrices $A,B$.
\end{lemma}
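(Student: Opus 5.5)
Set $x := a_{11} - |a_{12}|$ and $y := a_{22} - |a_{21}|$. Both the identity in~\eqref{eqn:psi(A)-equality} and the structural properties then reduce to statements about the scalar function $g(x,y) := \min\{x_+, y_+\} - x_- - y_-$ composed with the map $A \mapsto (x,y)$. I would first prove the identity $g(x,y) = \min\{x, y, x+y\}$ by cases on the signs of $x$ and $y$.

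If $x,y \ge 0$, then $g = \min\{x,y\}$, and since $x+y \ge \max\{x,y\}$ this equals $\min\{x,y,x+y\}$. If $x \ge 0 > y$, then $g = 0 - 0 - (-y) = y$, and indeed $y < 0 \le x$ and $y \le x+y$, so the right-hand side is also $y$. The case $y \ge 0 > x$ is symmetric. If $x,y<0$, then $g = x+y$, which is the smallest of the three quantities. This establishes the equality.

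For concavity and one-homogeneity I would use the second expression. The map $A \mapsto x = a_{11} - |a_{12}|$ is concave, being linear minus a convex function, and it is positively one-homogeneous; the same holds for $y$ and for $x+y$. A pointwise minimum of concave, positively one-homogeneous functions is again concave and positively one-homogeneous, so $\psi_2$ has both properties.

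Superadditivity follows as in Lemma~\ref{lemma:lambda-m-properties}. For concave $1$-homogeneous $\psi_2$ we have
\[
\psi_2(A+B) = 2\,\psi_2\bigl(\tfrac12 A + \tfrac12 B\bigr) \ge \psi_2(A) + \psi_2(B).
\]
Alternatively, one can check directly that each of $x$, $y$, $x+y$ is superadditive, by the triangle inequality $|a_{12}+b_{12}| \le |a_{12}|+|b_{12}|$, and that a minimum of superadditive functions is superadditive.

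I expect no real obstacle here. The only point requiring care is the case analysis in the identity, specifically the mixed-sign cases, where one must check that the third entry $x+y$ does not undercut the claimed minimum.
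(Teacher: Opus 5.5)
Your proposal is correct and matches the paper's proof. The paper likewise sets $s(A) = a_{11}-|a_{12}|$ and $t(A) = a_{22}-|a_{21}|$, checks $\min\{s_+,t_+\}-s_--t_- = \min\{s,t,s+t\}$ by cases on the signs (using symmetry for the mixed case), and reads off concavity, positive one-homogeneity and superadditivity from writing $\psi_2$ as a minimum of three functions with those properties. That last step is your stated alternative; your main argument via $\psi_2(A+B) = 2\psi_2(\tfrac12 A+\tfrac12 B)$ is an equally valid variant.
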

\begin{proof}
    Let $s(A) := a_{11} - |a_{12}|$ and $t(A) := a_{22} - |a_{21}|$, so the above equality amounts to proving
\[
\min \{ s_+ , t_+ \} - s_- - t_- = \min \{ s , t , s+t \}
\]
for any real numbers $s,t$.
This equality follows by considering the possible sign of $s,t$; by symmetry, we need only address $\{ s,t \geq 0 \}$, $\{ s,t \leq 0 \}$, and $\{ t \leq 0 \leq s\}$.
In the first case, both expressions equal $\min \{ s,t \}$; in the second case, they equal $s+t$; in the last case, they both equal $t$.
This proves the equality~\eqref{eqn:psi(A)-equality}.
Thus, we can express $\psi_2(A) = \min \{ s(A), t(A), s(A) + t(A) \}$, where each function in this minimization is concave, positively one-homogeneous in $A$, and superadditive on matrices.
Therefore, $\psi_2$ has the same properties; this proves our assertion.
\end{proof}

We now write $\textup{skew} \, A := \frac{1}{2} (A - A^t)$.
As for the function $\Lambda_{m,\beta}$ of~\eqref{eqn:lambda-m-beta}, we define functions of matrix-valued measures $\mathbf{A}$ by
\[
\cN_m(\mathbf{A}) := \int \lambda_m \Bigl( \frac{d \mathbf{A}}{d \sigma} \Bigr)_- \, d \sigma, \qquad \cK_m (\mathbf{A}) := \int \Bigl\| \textup{skew} \, \frac{d \mathbf{A}}{d \sigma} \Bigr\|_{\textup{op}} \, d \sigma
\]
where $\sigma$ is any measure dominating all the entries of $\mathbf{A}$, with $\frac{d \mathbf{A}}{d \sigma}$ the Radon-Nikodym derivative.
We also define, specifically for $2\times2$ matrix-valued measures $\mathbf{A} \ll \sigma$,
\[
\Psi_2( \mathbf{A}) := \int \psi_2 \Bigl( \frac{d \mathbf{A}}{d \sigma} \Bigr) \, d \sigma.
\]
Because the functions $\lambda_m(a)_- , \| \textup{skew} \, a \|_{\textup{op}}$, and $\psi_2(a)$ are positively one-homogeneous, these functionals are well-defined independently of the particular choice of $\sigma$.
\begin{lemma}\label{lemma:symmetric-positive-definite-average}
    Let $S$ be a symmetric positive semidefinite $2 \times 2$ matrix.
    Then,
    \[
    \frac{1}{2 \pi} \int_0^{2 \pi} \psi_2 (R_{\theta}^t S R_{\theta}) \, d \theta = \frac{4}{\pi} \lambda_{2,\beta}(S), \qquad \text{where } \; \beta := \frac{4-\pi}{8}.
    \]
    Here, $R_{\theta}\in \textup{SO}(2)$ denotes the rotation matrix of angle $\theta$.
\end{lemma}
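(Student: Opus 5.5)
We reduce to a diagonal matrix, show that $\psi_2$ collapses to a single expression on conjugates of a positive semidefinite matrix, and then average explicitly in $\theta$.

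\emph{Reduction to diagonal form.} By the spectral theorem we write $S = R_{\theta_0}^t \, \textup{diag}(\lambda_1,\lambda_2) \, R_{\theta_0}$ with $\lambda_1 \geq \lambda_2 \geq 0$, so that $\lambda_2(S) = \lambda_2$ and $\textup{tr}\, S = \lambda_1 + \lambda_2$. Since $R_{\theta_0} R_\theta = R_{\theta_0+\theta}$ and the average over $\theta \in [0,2\pi]$ is translation invariant, we may assume $S = \textup{diag}(\lambda_1,\lambda_2)$. The right-hand side is also conjugation invariant by Lemma~\ref{lemma:lambda-m-properties}.

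\emph{Simplifying $\psi_2$.} Set $A = R_\theta^t S R_\theta$, $\mu := \tfrac12(\lambda_1+\lambda_2)$ and $d := \lambda_1 - \lambda_2 \geq 0$. A direct computation gives
\[
a_{11} = \mu + \tfrac d2 \cos 2\theta, \qquad a_{22} = \mu - \tfrac d2 \cos 2\theta, \qquad |a_{12}| = |a_{21}| = \tfrac d2 |\sin 2\theta|.
\]
Now use the second expression in~\eqref{eqn:psi(A)-equality}, $\psi_2(A) = \min\{s,t,s+t\}$, with $s = a_{11}-|a_{12}|$ and $t = a_{22}-|a_{12}|$.

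Positive semidefiniteness gives $a_{11},a_{22} \geq 0$ and $a_{11}a_{22} \geq a_{12}^2$. Hence $s$ and $t$ cannot both be negative: if $a_{22} < |a_{12}|$, then $a_{11} \geq |a_{12}|$, and the degenerate case $a_{12}=0$ is trivial.
\begin{itemize}
\item If $s,t \geq 0$, then $s+t \geq \max\{s,t\}$, so $\psi_2(A) = \min\{s,t\}$.
\item If exactly one of them, say $t$, is negative, then $s \geq 0$ gives $s+t \geq t$, so again $\psi_2(A) = t = \min\{s,t\}$.
\end{itemize}
In all cases
\[
\psi_2(R_\theta^t S R_\theta) = \min\{a_{11},a_{22}\} - |a_{12}| = \mu - \tfrac d2\bigl(|\cos 2\theta| + |\sin 2\theta|\bigr).
\]

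\emph{Averaging.} Since $\frac{1}{2\pi}\int_0^{2\pi} |\cos 2\theta| \, d\theta = \frac{1}{2\pi}\int_0^{2\pi} |\sin 2\theta| \, d\theta = \frac 2\pi$, the average equals
\[
\mu - \tfrac{2}{\pi} d = \lambda_1\bigl(\tfrac12 - \tfrac2\pi\bigr) + \lambda_2\bigl(\tfrac12 + \tfrac2\pi\bigr).
\]
On the other side, with $\beta = \frac{4-\pi}{8}$,
\[
\tfrac4\pi \lambda_{2,\beta}(S) = \tfrac4\pi\lambda_2 - \tfrac{4-\pi}{2\pi}(\lambda_1+\lambda_2).
\]
This has $\lambda_1$-coefficient $\tfrac12 - \tfrac2\pi$ and $\lambda_2$-coefficient $\tfrac4\pi - \tfrac2\pi + \tfrac12 = \tfrac2\pi + \tfrac12$, so the two sides agree.

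The only delicate point is the case analysis in the second step: using positive semidefiniteness to rule out $s,t$ both negative, so that the piecewise formula for $\psi_2$ becomes a single trigonometric expression. Everything after that is an elementary computation.
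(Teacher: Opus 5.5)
Your proof is correct and follows essentially the same route as the paper. Both diagonalize by a rotation, write $\min\{a_{11},a_{22}\}$ and $|a_{12}|$ in terms of $|\cos 2\theta|$ and $|\sin 2\theta|$, and use positive semidefiniteness to see that $a_{11}-|a_{12}|$ and $a_{22}-|a_{21}|$ cannot both be negative, so that $\psi_2 = \min\{a_{11},a_{22}\}-|a_{12}|$; both then average using $\frac{1}{2\pi}\int_0^{2\pi}|\cos 2\theta|\,d\theta=\frac{1}{2\pi}\int_0^{2\pi}|\sin 2\theta|\,d\theta=\frac{2}{\pi}$, with your explicit entry formulas and coefficient check filling in details the paper leaves implicit.
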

\begin{proof}
    Let $\lambda_1 \geq \lambda_2 \geq 0$ be the eigenvalues of the matrix $S$.
    After rotating the basis, we can write
    \begin{equation}\label{eqn:a11-a22-a12}
    \min \{ a_{11}  , a_{22} \} = \frac{\lambda_1 + \lambda_2}{2} - \frac{\lambda_1 - \lambda_2}{2} \, |\cos 2 \theta|, \qquad |a_{12}| = \frac{\lambda_1 - \lambda_2}{2} |\sin 2 \theta|.
    \end{equation}
    We recall that a symmetric $2 \times 2$ matrix $S$ with entries $(a_{ij})$ is positive semidefinite if and only if $a_{11}, a_{22} \geq 0$ and $a_{11} a_{22} \geq a_{12}^2$.
    Hence, at least one of $a_{11} - |a_{12}|$ and $a_{22} - |a_{21}|$ is non-negative, so the definition of $\psi_2$ from Lemma~\ref{lemma:2x2-matrix} gives $\psi_2(S) = \min \{ a_{11}, a_{22} \} - |a_{12}|$.
    Since the averages of the functions $|\cos 2 \theta|$ and $|\sin 2 \theta|$ over a period $[0,2\pi]$ both equal $\frac{2}{\pi}$, the relation~\eqref{eqn:a11-a22-a12} implies that
    \[
    \frac{1}{2 \pi} \int \psi_2 ( R_{\theta}^t S R_{\theta}) \, d \theta = \frac{\lambda_1 + \lambda_2}{2} - \frac{2}{\pi} (\lambda_1 - \lambda_2) = \Bigl( \frac{1}{2} - \frac{2}{\pi} \Bigr) (\lambda_1 + \lambda_2) + \frac{4}{\pi} \lambda_2.
    \]
    Since $\lambda_1 + \lambda_2 = \textup{tr}(S)$ and $\lambda_{\min}(S) = \lambda_2$ this proves our claim.
\end{proof}

\begin{lemma}\label{lemma:signed-average}
For every $2 \times 2$ matrix $A$, it holds that
\[
\frac{4}{\pi} \lambda_{2,\beta}(A) \leq \frac{1}{2 \pi} \int_0^{2 \pi} \psi_2 (R^t_{\theta} A R_{\theta}) \, d \theta + \lambda_2(A)_- + 2 \, \| \textup{skew} \, A \|_{\textup{op}} .
\]
\end{lemma}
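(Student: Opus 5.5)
The plan is to split $A$ into three pieces, apply Lemma~\ref{lemma:symmetric-positive-definite-average} to the positive semidefinite piece, and absorb the other two pieces using the superadditivity of $\psi_2$ from Lemma~\ref{lemma:2x2-matrix}.

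\textbf{Decomposition.} Write $A = S + K$ with $S := \textup{sym}\,A$ and $K := \textup{skew}\,A = kJ$. Here $J$ is the rotation by $\pi/2$ and $|k| = \|\textup{skew}\,A\|_{\textup{op}}$. Set $c := \lambda_2(A)_- = \lambda_{\min}(S)_-$ and $S' := S + cI$. Then $S'$ is symmetric positive semidefinite, and
\[
A = S' + (-cI) + kJ .
\]
Since $R_\theta$ commutes with $J$ and with $I$, we have
\[
R_\theta^t A R_\theta = R_\theta^t S' R_\theta - cI + kJ .
\]

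\textbf{Lower bound for the rotated average.} By the superadditivity in Lemma~\ref{lemma:2x2-matrix},
\[
\psi_2(R_\theta^t A R_\theta) \geq \psi_2(R_\theta^t S' R_\theta) + \psi_2(-cI) + \psi_2(kJ).
\]
The last two terms are explicit from the formula $\psi_2 = \min\{s,t,s+t\}$:
\[
\psi_2(-cI) = \min\{-c,-c,-2c\} = -2c, \qquad \psi_2(kJ) = \min\{-|k|,-|k|,-2|k|\} = -2|k|.
\]
Averaging over $\theta$ and applying Lemma~\ref{lemma:symmetric-positive-definite-average} to $S'$ gives
\[
\frac{1}{2\pi}\int_0^{2\pi}\psi_2(R_\theta^t A R_\theta)\,d\theta \;\geq\; \frac{4}{\pi}\lambda_{2,\beta}(S') - 2c - 2\|\textup{skew}\,A\|_{\textup{op}}.
\]

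\textbf{Comparing $\lambda_{2,\beta}(S')$ with $\lambda_{2,\beta}(A)$.} We have $\lambda_2(A) = \lambda_2(S) = \lambda_2(S') - c$ and $\textup{tr}\,A = \textup{tr}\,S' - 2c$. Hence
\[
\lambda_{2,\beta}(A) = \lambda_{2,\beta}(S') - (1-2\beta)c .
\]
The choice $\beta = \frac{4-\pi}{8}$ gives $1 - 2\beta = \frac{\pi}{4}$, so
\[
\frac{4}{\pi}\lambda_{2,\beta}(A) = \frac{4}{\pi}\lambda_{2,\beta}(S') - c .
\]

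\textbf{Conclusion.} Substituting this identity into the lower bound yields
\[
\frac{1}{2\pi}\int_0^{2\pi}\psi_2(R_\theta^t A R_\theta)\,d\theta \;\geq\; \frac{4}{\pi}\lambda_{2,\beta}(A) - c - 2\|\textup{skew}\,A\|_{\textup{op}},
\]
which is exactly the claimed inequality, since $c = \lambda_2(A)_-$.

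\textbf{Main obstacle.} The only delicate point is the bookkeeping of constants. The $-2c$ loss coming from the shift $-cI$ must be partially compensated by the trace term in $\lambda_{2,\beta}$. This works precisely because $\frac{4}{\pi}(1-2\beta) = 1$ for this value of $\beta$, leaving exactly one copy of $\lambda_2(A)_-$ on the right-hand side.
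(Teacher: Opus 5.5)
Your proof is correct and follows essentially the same route as the paper. You split $A=\textup{sym}\,A+kJ$, shift the symmetric part by $\lambda_2(A)_-\,I$ to make it positive semidefinite, apply Lemma~\ref{lemma:symmetric-positive-definite-average}, and use $1-2\beta=\frac{\pi}{4}$ to turn the trace shift into exactly one copy of $\lambda_2(A)_-$; the only difference is cosmetic. You obtain the losses $2\lambda_2(A)_-$ and $2\|\textup{skew}\,A\|_{\textup{op}}$ from superadditivity of $\psi_2$ together with the values $\psi_2(-cI)=-2c$ and $\psi_2(kJ)=-2|k|$, whereas the paper states the equivalent entrywise bounds $\psi_2(B+\ell I)\le\psi_2(B)+2\ell$ and $\psi_2(B+kJ)\ge\psi_2(B)-2|k|$ directly.
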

\begin{proof}
We set $\ell := \lambda_2(A)_-$.
Let us decompose
\[
A =  S+kJ , \qquad \textup{where } \; S := \textup{sym} \, A, \qquad J= \begin{pmatrix}
    0 & -1 \\
    1 & 0 
\end{pmatrix}.
\]
Then, $\| \textup{skew} \, A \|_{\textup{op}} = |k|$.
Next, by definition, the matrix $S + \ell I \geq 0$ is positive semidefinite and satisfies $\lambda_2 ( S+ \ell I) = \lambda_2(S) + \ell$, so we can write
\[
\lambda_{2,\beta}(S+ \ell I) = \lambda_{2,\beta}(S) + (1-2 \beta) \ell = \lambda_{2,\beta}(S) + \tfrac{\pi}{4} \ell.
\]
Also, the projections $s(B) := a_{11} - |a_{12}|$ and $t(B) := a_{22} - |a_{21}|$ increase by $\ell$, namely $s(B+ \ell I) = s(B) + \ell$ and $t(B+ \ell I) = t(B)+\ell$.
The definition of $\psi_2$ implies that $\psi_2(B+ \ell I) \leq \psi_2(B) + 2 \ell$.
We apply these inequalities with $B = R^t_{\theta} (S+ \ell I ) R_{\theta}$ and average over $\theta \in [0,2 \pi]$ to obtain
\[
\frac{4}{\pi} \lambda_{2,\beta}(S+ \ell I) = \frac{1}{2 \pi} \int_0^{2\pi} \psi_2(R^t_{\theta} (S+ \ell I) R_{\theta} ) \, d \theta \leq \frac{1}{2\pi} \int_0^{2\pi} \psi_2(R^t_{\theta} S R_{\theta}) \, d \theta + 2\ell
\]
due to $R^t_{\theta} (S +\ell I) R_{\theta} = R^t_{\theta} S R_{\theta} + \ell I$.
Since $\lambda_{2,\beta}(S+\ell I) = \lambda_{2,\beta}(S) + \frac{\pi}{4} \ell$, this inequality becomes
\[
\frac{4}{\pi} \lambda_{2,\beta}(S) + \ell \leq \frac{1}{2 \pi} \int_0^{2 \pi} \psi_2( R^t_{\theta} S R_{\theta}) \, d \theta + 2 \ell
\]
and rearranging terms proves the inequality with right-hand side $\lambda_2(A)_-$.
Next, conjugation by elements of $\textup{SO}(2)$ leaves the matrix $kJ$ unchanged.
Adding $kJ$ to a matrix changes the absolute value of each off-diagonal entry by at most $|k|$, and therefore decreases $\psi_2$ by at most $2|k|$.
Hence,
\[
\psi_2 ( R^t_{\theta} S R_{\theta}) \leq \psi_2(R^t_{\theta} A R_{\theta}) + 2 \, |k| = \psi_2(R^t_{\theta} A R_{\theta}) + 2 \, \| \textup{skew} \, A \|_{\textup{op}}.
\]
Since $\lambda_{2,\beta}(A) = \lambda_{2,\beta}(S)$ due to $\textup{sym} \, A = S$ and $\textup{tr} \, A = \textup{tr} \, S$, the result follows.
\end{proof}

\section{A Kakeya-type inequality for vector fields}\label{section:kakeya}

In this section, we will obtain a Kakeya-type inequality for vector fields on the plane.
The result strengthens certain steps of~\cite{anisotropic-michael-simon}*{Theorem 3.1} with sharp constants, which are crucial for our subsequent arguments.
We first extend the construction of cones and flow boxes as in~\cite{anisotropic-michael-simon}*{\S 3.2} to arbitrary dimension.
Given a unit direction $e \in \bS^{m-1}$, we can decompose every vector as $v = \la v,e \rg e + v_e^{\perp}$.
We then define an open $45^{\circ}$ cone
\begin{equation}\label{eqn:q-cone}
    q_e(v) := \la v, e \rg - |v_e^{\perp}|, \qquad C^e := \{ v : q_e(v) > 0 \}
\end{equation}
about the oriented direction $e$.
Given a vector-valued measure $\mu \in \cM ( \bR^m ; \bR^m)$, we choose any positive Radon measure $\sigma$ dominating $\mu$ entry-wise, denoted $\mu \ll \sigma$.
We then define
\begin{equation}\label{eqn:P-N-measures}
    \mu^+_e := \Bigl[ q_e \Bigl( \frac{d \mu}{d \sigma} \Bigr) \Bigr]_+ \sigma, \qquad \mu^-_e := \Bigl[ q_e \Bigl( \frac{d \mu}{d \sigma} \Bigr) \Bigr]_- \sigma,
\end{equation}
which are independent of $\sigma$ because the functions $q_e$ and $(q_e)_{\pm}$ are positively one-homogeneous.

\begin{theorem}\label{thm:kakeya-type-inequality}
    Let $S,T$ be finite vector-valued measures on $\bR^2$ with measure-valued divergence.
    For every non-negative compactly supported Borel function $\chi$, it holds that
    \begin{align*}
        \int_{\bR^2} \chi \min \{ q_x(S)_+, q_y(T)_+ \} &\leq \| \chi \|_{L^2} \Bigl( \int_{\bR^2} |S| + |\textup{div} \, S| \Bigr)^{\frac{1}{2}} \Bigl( \int_{\bR^2} |T| + |\textup{div} \, T| \Bigr)^{\frac{1}{2}} \\
        & \quad + 2 \| \chi \|_{L^{\infty}} \int_{\bR^2} ( |\textup{div} \, S|  +|\textup{div} \, T|) + \| \chi \|_{L^{\infty}} \int_{\bR^2} (q_x(S)_- + q_y(T)_-).
    \end{align*}
\end{theorem}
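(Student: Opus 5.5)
We plan to follow the Kakeya-type argument of \cite{anisotropic-michael-simon}*{Theorem 3.1}, replacing each constant-losing step with Lemmas~\ref{lemma:rising-sum} and~\ref{lemma:lipschitz-curve}, whose constants equal $1$.

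\emph{Reduction to smooth data.} By Lemma~\ref{lemma:strict-approximation} and Reshetnyak continuity (Lemma~\ref{lemma:reshetnyak-continuity}), applied to the positively $1$-homogeneous integrands $\min\{q_x(\cdot)_+,q_y(\cdot)_+\}$ and $(q_e)_-$, we can assume that $S,T$ are smooth and compactly supported. For the $\min$ term we use a joint strict approximation of the $\bR^4$-valued measure $(S,T)$. We also take $\chi$ to be continuous, and recover Borel $\chi$ by monotone approximation in $L^2$ with $\|\chi\|_{L^\infty}$ controlled. A cutoff handles the compact support of $S,T$ at the cost of arbitrarily small errors.

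\emph{Decomposition into curves.} Apply Theorem~\ref{thm:smirnov} to $S$ and to $T$. The measure $S$ becomes a superposition of arcs $\gamma_\sigma$ (weight $\lambda$) and loops, with no mass cancellation, so $q_x(S)_\pm$ is the superposition of $q_x(\dot\gamma)_\pm$ along the curves. Moreover, the total arc weight satisfies $\lambda(\Sigma)\le |\textup{div}\,S|(\bR^2)$ (up to a factor $2$ for counting both endpoints). The same holds for $T$ with curves $\eta_\tau$. Slice $\bR^2$ into vertical strips $\cU^x_j$ and horizontal strips $\cU^y_k$. Each curve of $S$ is cut into pieces lying in a single strip $\cU^x_j$.

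\emph{Exploiting monotonicity.} By Lemma~\ref{lemma:lipschitz-curve}, in each vertical strip the total forward-cone mass $\int q_x(\dot\gamma)_+$ is bounded by the number of genuine crossings of the strip, plus $\int q_x(\dot\gamma)_-$, plus the number of arc endpoints, which is where the $2\int|\textup{div}|$ terms come from. A rising-sun refinement (Lemma~\ref{lemma:rising-sum}) restricts attention to the portion of each curve that crosses the strip monotonically in $x$. Each such crossing has $x$-extent $1$.

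\emph{Kakeya step.} The remaining crossing mass is spread over the strips, and we intersect vertical-strip crossings of $S$ with horizontal-strip crossings of $T$. A crossing of $\cU^x_j$ inside the cone $C^x$ is a $1$-Lipschitz graph over $x$, and similarly for $T$ over $y$, so two such graphs meet in at most one point per unit cell $\cU^x_j\cap\cU^y_k$. Set $n_j:=\#$ crossings of $\cU^x_j$ (weighted by $\lambda$) and $m_k$ likewise for $T$. The contribution of cell $(j,k)$ to $\int\chi\min\{q_x(S)_+,q_y(T)_+\}$ should then be at most about $\|\chi\|_{L^\infty(\text{cell})}\min(n_j,m_k)$-type quantities, and more precisely bounded by $\int_{\text{cell}}\chi$ times the product of the densities. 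Cauchy--Schwarz over cells gives $\|\chi\|_{L^2}(\sum n_j)^{1/2}(\sum m_k)^{1/2}$, and $\sum_j n_j\le\int|S|+|\textup{div}\,S|$ since each crossing carries mass at least $1$. To get the clean constant $1$, we average over translations of the grid and over scales, which converts the cell-maximum of $\chi$ into an $L^2$ norm.

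\emph{Main obstacle.} We expect the main obstacle to be this last step: obtaining the exact constant $1$ in front of $\|\chi\|_{L^2}$, without a loss from the grid discretization or from $\min$ versus product. We would first try the third inequality of Lemma~\ref{lemma:lipschitz-curve} with the weight $h=\chi$ restricted to each curve. This turns the integral over a curve into a layer-cake integral over superlevel sets of $\chi$, which should make the area bound for the intersection set replace the cell count directly.
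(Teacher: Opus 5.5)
There is a genuine gap, and it sits in the step you call the Kakeya step. Knowing that good crossings of $S$ and $T$ meet at most once does not by itself bound the area integral $\int \chi \min\{q_x(S)_+, q_y(T)_+\}$. Your cell estimate ``should then be at most about \ldots'' is never turned into an inequality. Three ingredients are missing.

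\emph{A pointwise reduction to a Jacobian.} One needs $\min\{a,b\} \leq \sqrt{ab}$ together with $q_x(v)_+\, q_y(w)_+ \leq \det(v,w)$ for $v \in \overline{C^x}$, $w \in \overline{C^y}$.

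\emph{A splitting into a divergence-free crossing part and a remainder.} On each left-to-right crossing box the paper writes $S = \alpha Z$ with $\textup{div}\, Z = 0$ (Lemma~\ref{lemma:flow-box}). It sets $S_f := (\min_\gamma \alpha)\, Z$ along each trajectory and controls $S_d = S - S_f$ by the weighted third inequality of Lemma~\ref{lemma:lipschitz-curve} with $h = \alpha - \alpha_f$. That is where this inequality enters, not with $h = \chi$. Since $S_f$ and $S_d$ are parallel, $q_x(S)_\pm$ split additively.

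\emph{A mechanism turning ``at most one intersection'' into an area bound.} Write $S_f = \nabla^\perp \varphi$ and $T_f = \nabla^\perp \psi$. The map $\Phi = (\varphi,\psi)$ is injective on $G^x \cap G^y$; this needs every later point in the open cone, not just monotonicity in $x$. The area formula then gives $\int_G q_x(S_f)_+ q_y(T_f)_+ \leq \int_G \det D\Phi \leq \textup{osc}\,\varphi \cdot \textup{osc}\,\psi$. Each oscillation is an entrance flux, bounded by $\int_{\cR^x} |S| + |\textup{div}\, S|$ via the divergence theorem averaged over slices.

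In your global-Smirnov language you would instead need an intersection formula $\int \det(S^c,T^c)\,dx = \iint \#(\gamma_\sigma \cap \eta_\tau)\,d\lambda\,d\mu$ for the good crossing parts. This does not come for free for arbitrary superpositions, and it is essentially what the stream-function argument supplies.

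Your proposed fix for the ``main obstacle'' is also misdirected. Once the per-pair bound above holds, the constant $1$ in front of $\|\chi\|_{L^2}$ is immediate. You have $\int_G \chi \sqrt{q_x q_y} \leq \|\chi\|_{L^2(G)} (\int_G q_x q_y)^{1/2}$, and then Cauchy--Schwarz over pairs of boxes, using that the sets $G^x_j \cap G^y_k$ are disjoint.

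Conversely, no averaging over translations or scales can upgrade a bound through $\|\chi\|_{L^\infty(\text{cell})}$ to $\|\chi\|_{L^2}$. If $\chi = \mathbf{1}_E$ with $E$ open, dense and of small measure, then $\|\chi\|_{L^\infty(\text{cell})} = 1$ on every cell of every grid. This can happen for $E \supset \pi_P(M)$ in Proposition~\ref{prop:A-div-S}. The weighted inequality of Lemma~\ref{lemma:lipschitz-curve} with $h = \chi|_\gamma$ involves one field along one curve and requires $\min h = 0$, so it cannot produce a bilinear term.

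Finally, the constant that matters downstream is the $\|\chi\|_{L^\infty}$ in front of $\int (q_x(S)_- + q_y(T)_-)$. To keep it equal to $1$, each piece of negative mass must be spent exactly once:
\begin{itemize}
\item Lemma~\ref{lemma:rising-sum} for $S_f$ on the crossing boxes;
\item the weighted inequality for $S_d$;
\item Lemma~\ref{lemma:lipschitz-curve} on the non-crossing region.
\end{itemize}
Your sketch applies both Lemma~\ref{lemma:lipschitz-curve} and the rising-sun refinement within the same strip without making this disjointness explicit.
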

The proof of Theorem~\ref{thm:kakeya-type-inequality} will proceed in a number of steps, following the general strategy of De Philippis and Pigati.
The key improvement over their result is the sharp factor $\| \chi \|_{L^{\infty}}$ for the term $\int_{\bR^2} ( q_x(S)_- + q_y(T)_-)$, in place of the dimensional constant $C'$ used in their work.
In the proof of Proposition~\ref{prop:A-div-S} towards Theorem~\ref{thm:anisotropic-MS}, we will invoke this result with $\chi = \mathbf{1}_{E \cap B_R}$ having $\| \chi \|_{L^{\infty}} \leq 1$.
Crucially, this sharp constant will allow us to collect terms as
\[
\Psi_2( \mathbf{A}) = \int_{\bR^2} \min \{ q_x(S)_+ , q_y(T)_+ \} - \int_{\bR^2} (q_x(S)_- + q_y(T)_-).
\]
and apply the averaged spectral bound of Lemma~\ref{lemma:2x2-matrix} for $\Lambda_{2,\beta}( \mathbf{A})$.

Let us outline the proof of Theorem~\ref{thm:kakeya-type-inequality}.
As in Lemma~\ref{lemma:lipschitz-curve}, in Euclidean space $\bR^m$ of arbitrary dimension, we can consider the strips
\[
\cU^e_j := \{ x \in \bR^m : j \leq \la x, e \rg \leq j+1 \}, \qquad j \in \bZ.
\]
Given a smooth nowhere-vanishing vector field $S$, a \textbf{flow box in the $e$-direction} is a region $\cR^e \subset \cU^e_j$, diffeomorphic to $\Sigma \times (0,1)$ and foliated by integral curves of $S$, each entering transversely through the side $\{ \la x, e \rg = j \}$, exiting transversely through the side $\{ \la x,e \rg = j+1 \}$, and with the lateral boundary foliated by integral curves of $S$.
These definitions recover the notation $\cU^x_j,C^x$ and $\cU^y_k, C^y$ for $e_x = (1,0)$ and $e_y = (0,1)$, in which case $q_x(v) = v_x - |v_y|$ and $q_y(w) = w_y - |w_x|$ on $\bR^2$.
See also~\cite{anisotropic-michael-simon}*{Definition 3.7}, where the terminology \textit{good region} is used for the vertical and horizontal flow boxes $\cR^x, \cR^y$.
We refer the reader to~\cite{anisotropic-min-max}*{\S 3.1} for illustrations of the flow boxes formed by vector fields $S,T$ on the plane.

To prove Theorem~\ref{thm:kakeya-type-inequality}, we will use repeated regularization.
We first establish the estimate for smooth compactly supported vector fields.
The passage from general vector-valued measures to this case is performed at the end of the proof: we convolve the $\bR^4$-valued measure $(S,T)$ with a common approximate identity, use the uniform estimate for the resulting smooth vector fields, and invoke Reshetnyak's continuity theorem~\ref{lemma:reshetnyak-continuity}.
We also use a more precise description after Lemma~\ref{lemma:S-vector-field}.
Given vector fields $S,T$, we apply Lemma~\ref{lemma:S-vector-field} separately to $S$ and $T$, prescribing the directions $e=e_x$ and $e=e_y$ in part $(ii)$, respectively.
Thus, for sufficiently small $\tau>0$, we obtain $S_{\tau} = \eta_{\tau} U_{\tau}$ and $T_{\tau} = \zeta_{\tau} V_{\tau}$, where $\eta_{\tau}, \zeta_{\tau} \geq 0$, the unit fields $U_{\tau}, V_{\tau}$ are $C^{\infty}$ away from finite sets $F^S_{\tau}, F^T_{\tau}$, and
\begin{equation}\label{eqn:S-T-convergence}
S_{\tau} \to S, \qquad T_{\tau} \to T, \qquad \textup{div} \, S_{\tau} \to \textup{div} \, S, \qquad \textup{div} \, T_{\tau} \to \textup{div} \, T \quad \text{in } \; L^1(\bR^2)
\end{equation}
as $\tau \downarrow 0$.
The functions $v \mapsto q_x(v)_{\pm}$ and $v \mapsto q_y(v)_{\pm}$ are Lipschitz and positively one-homogeneous, so all terms in the desired estimate pass to the limit under the $L^1$-convergence~\eqref{eqn:S-T-convergence}.
Hence, it suffices to prove the estimate for the vector fields $S_{\tau}, T_{\tau}$, with constants independent of $\tau$.
In what follows, we will fix and suppress the subscript $\tau$ and write $S = \eta U$ and $T = \zeta V$, with $F = F_{\tau} := F_{\tau}^S \cup F^T_{\tau}$ the union of the corresponding finite exceptional sets.

Thus, it remains to prove a uniform estimate for fields of the form $S = \eta U$ and $T = \zeta V$, where $U,V$ are the global unit direction fields furnished by Lemma~\ref{lemma:S-vector-field}
We separately estimate the contributions of $q_x(S)$ and $q_y (T)$ inside the flow boxes $\cR^x$ (respectively, $\cR^y$) and away from them.
The region $\cR^x$ is foliated by integral curves of $S$, so expressions of the form $\int_{\cR^x} (q_x (S))_+$ can be estimated using the one-dimensional integral estimates of Lemma~\ref{lemma:rising-sum} along trajectories.
In the region $\bR^2 \setminus \cR^x$, we use Lemma~\ref{lemma:lipschitz-curve} in Smirnov's decomposition Theorem~\ref{thm:smirnov}.

\subsection{Bounds inside flow boxes}
We begin with a description of the flow boxes $\cR^e \subset \cU^e_j$.
\begin{lemma}\label{lemma:flow-box}
    Let $\Omega \subset \bR^m$ be a flow box for a smooth non-vanishing vector field $S$, namely a region foliated by integral curves of $S$ and admitting flow coordinates $F: \Sigma \times I \to \Omega$.
    Then, there exist a smooth positive function $\alpha$ and a divergence-free, nowhere-vanishing vector field $Z$ such that $S = \alpha Z$ and the integral curves of $Z$ and $S$ coincide, with the same orientation.
\end{lemma}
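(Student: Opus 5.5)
We work in the flow coordinates $F:\Sigma\times I\to\Omega$ adapted to $S$, so that $\partial_t F(\sigma,t)=S(F(\sigma,t))$ and the curves $t\mapsto F(\sigma,t)$ are exactly the integral curves of $S$. Let $J(\sigma,t):=\det DF(\sigma,t)$, computed with respect to a fixed smooth volume form $d\sigma$ on the transversal $\Sigma$. Since $F$ is a diffeomorphism, after possibly reversing the orientation of $\Sigma$ we may assume $J>0$. Liouville's formula then gives
\[
\partial_t J(\sigma,t)=(\textup{div}\,S)(F(\sigma,t))\,J(\sigma,t).
\]
This is the standard identity that the divergence of $S$ measures the rate of change of the flow's volume density.

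We look for $Z=S/\alpha$ with $\alpha>0$. For a positive function $\alpha$ we have
\[
\textup{div}(S/\alpha)=\alpha^{-1}\bigl(\textup{div}\,S-\langle S,\nabla\log\alpha\rangle\bigr).
\]
Hence $\textup{div}\,Z=0$ is equivalent to the transport equation $S(\log\alpha)=\textup{div}\,S$ along each integral curve. In flow coordinates this reads $\partial_t\log(\alpha\circ F)=(\textup{div}\,S)\circ F$, which the Liouville formula solves explicitly.

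We therefore set $\alpha(F(\sigma,t)):=J(\sigma,t)$, or more generally $J(\sigma,t)/c(\sigma)$ for any smooth positive $c$ on $\Sigma$. The function $\alpha$ is smooth and positive on $\Omega$ because $F^{-1}$ is smooth and $J>0$. The transport equation holds by construction, so $Z:=S/\alpha$ is smooth, divergence-free and nowhere vanishing. Its integral curves are reparametrizations of those of $S$ by the increasing time change $ds/dt=\alpha$, so the two families coincide with the same orientation.

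As an alternative check, $Z$ corresponds to $\partial_t$ after pulling back the volume form by $F$, written $J\,d\sigma\,dt$. The vector field $\partial_t$ preserves the product measure $d\sigma\,dt$, and the pushforward of $\partial_t$ under $F$ equals $S$. Transferring the volume form therefore shows that $Z=S/J$ preserves Lebesgue measure.

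The only delicate point is regularity up to the boundary of the flow box. We would take the flow coordinates on a slightly larger open box, which is possible because $S$ is smooth and non-vanishing near $\overline\Omega$ and the entry and exit are transverse. This also guarantees that $\alpha$ is bounded above and below on $\Omega$, which is useful later when integrating along trajectories.
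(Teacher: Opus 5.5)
Your argument is correct and is essentially the paper's: the paper obtains $h=\alpha^{-1}$ by solving the transport ODE $\frac{d}{dt}h(F(p,t)) = -h\,\textup{div}\,S(F(p,t))$ along characteristics with $h=1$ on the entrance section, and your Liouville computation identifies this same solution explicitly as $\alpha(F(\sigma,t)) = J(\sigma,t)/J(\sigma,0)$, i.e.\ your choice $c(\sigma)=J(\sigma,0)$. Your closing paragraph should be dropped, though: it is unnecessary, since the lemma only asks for $\alpha$ smooth and positive on the open region $\Omega$, and extending to a slightly larger box with transverse entry and exit is not generally possible for the boxes the paper actually uses (their lateral boundaries are non-crossing trajectories, which can be tangent to the strip sides or run into the singular set), so the two-sided bounds on $\alpha$ you derive from it are not justified.
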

\begin{proof}
    We denote by $F(p,t)$ the flow coordinates for $S$, where $p \in \Sigma$ belongs to a transverse section.
    We define a smooth function $h>0$ and a vector field $Z := hS$ by solving, along characteristics,
    \begin{align*}
    &\tfrac{d}{dt} h(F(p,t)) = - h(F(p,t)) \, \textup{div} \, S(F(p,t)), \qquad h(F(p,0)) = 1, \\
    & \implies \textup{div} \, Z = \la S, \nabla h \rg + h \, \textup{div} \, S = 0
    \end{align*}
    by the construction of $h$.
    Finally, taking $\alpha := h^{-1}$ proves the claim.
\end{proof}

By Lemma~\ref{lemma:flow-box}, on a flow box $\cR^e \subset \cU^e_j$ for $S$, we write $S = \alpha Z$ with $\textup{div} \, Z=0$.
For each trajectory $\Gamma$ crossing the box, set $\alpha_f|_{\Gamma} := \min_{\Gamma} \alpha$ and decompose $S = S_f + S_d$ with $S_f := \alpha_f Z$ a continuous vector field.
Since $\alpha_f$ is constant along trajectories, this implies that $\textup{div} \, S_f =0$ and $\textup{div} \, S_d = \textup{div} S$.
For $p \in \cR^e$, let $\gamma_p([0,\tau_p])$ be the forward integral curve of $S$ through $p$, stopped when it reaches the right side of the flow box.
We define a \textbf{good set} as the Borel set of points
\[
G^e := \{ p \in \cR^e : \gamma_p(t) - p \in C^e \; \text{for every later point of the curve}, \; 0 < t \leq \tau_p \}.
\]
\begin{proposition}\label{prop:generalized-flow-boxes}
    In the above setting and using the notation~\eqref{eqn:q-cone}, we have
    \begin{align*}
    \int_{\cR^e \setminus G^e} (q_e (S_f))_+ &\leq \int_{\cR^e} (q_e(S_f))_-, \\
    \int_{\cR^e} (q_e(S_d))_+ &\leq \int_{\cR^e} |\textup{div} \, S| + \int_{\cR^e} (q_e(S_d))_-.
    \end{align*}
\end{proposition}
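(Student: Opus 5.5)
The plan is to straighten the flow box with the divergence-free field $Z$ from Lemma~\ref{lemma:flow-box}, turning both integrals over $\cR^e$ into averages over trajectories. Each inequality then becomes a one-dimensional estimate along a single trajectory: a rising-sun argument for the first, and Lemma~\ref{lemma:lipschitz-curve} for the second.

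\emph{Disintegration along trajectories.} Let $\Phi_t$ be the flow of $Z$ and let $\Sigma$ be the entry side $\{\la x,e\rg = j\}\cap \overline{\cR^e}$. Let $\nu := \la Z, e\rg\,\cH^{m-1}\mres\Sigma \geq 0$ be the flux measure, which is nonnegative because trajectories enter transversely. Since $\textup{div}\, Z = 0$, the flow preserves Lebesgue measure, so for every Borel $g\geq 0$
\[
\int_{\cR^e} g \, dx = \int_{\Sigma} \int_0^{t_p} g(\Phi_t(p)) \, dt \, d\nu(p),
\]
where $t_p$ is the exit time. This is Liouville's theorem in the coordinates $(p,t)\mapsto \Phi_t(p)$. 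Along a trajectory $\gamma(t) = \Phi_t(p)$ we have $\dot\gamma = Z(\gamma)$. Moreover $\alpha_f$ is constant along $\gamma$, and $q_e$ is positively one-homogeneous, so $q_e(S_f)=\alpha_f\, q_e(\dot\gamma)$ and $q_e(S_d) = \beta\, q_e(\dot\gamma)$ with $\beta := \alpha - \alpha_f \geq 0$ and $\min_\gamma \beta = 0$. Also $\textup{div}\, S = \textup{div}(\alpha Z) = \la Z,\nabla\alpha\rg = \tfrac{d}{dt}\alpha(\gamma(t)) = \beta'(t)$. Since the good set $G^e$ is defined through the (reparametrization-invariant) curves of $S$, it suffices to prove, for each trajectory,
\[
\int_{B_\gamma} q_e(\dot\gamma)_+ \leq \int_0^{t_p} q_e(\dot\gamma)_-, \qquad \int_0^{t_p}\beta\, q_e(\dot\gamma)_+ \leq \int_0^{t_p}|\beta'| + \int_0^{t_p}\beta\, q_e(\dot\gamma)_-,
\]
where $B_\gamma := \{ s : \exists\, t\in(s,t_p] \text{ with } q_e(\gamma(t)-\gamma(s)) \le 0\}$ is the bad set. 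Multiplying by $\alpha_f$ and integrating against $\nu$ then gives both claims.

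\emph{The second inequality.} This is exactly the last assertion of Lemma~\ref{lemma:lipschitz-curve} applied with $h = \beta$. The hypotheses hold: $\gamma\subset\cU^e_j$, $\beta \geq 0$, $\min\beta = 0$, and $\beta$ is smooth on the trajectory. This gives the $\int_{\cR^e}|\textup{div}\, S|$ term.

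\emph{The first inequality (main obstacle).} Here I would adapt the rising sun lemma (Lemma~\ref{lemma:rising-sum}) to the cone-valued setting. The key fact is that $q_e$ is concave and positively one-homogeneous, hence superadditive. Jensen's inequality therefore gives
\[
\int_s^t q_e(\dot\gamma) \leq q_e\bigl(\gamma(t)-\gamma(s)\bigr) \qquad \text{for all } s<t.
\]
So every $s\in B_\gamma$ is the left endpoint of an interval $[s,t(s)]$ on which $\int q_e(\dot\gamma)\le 0$, that is, $\int_s^{t(s)} q_e(\dot\gamma)_+ \leq \int_s^{t(s)} q_e(\dot\gamma)_-$. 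The difficulty is assembling these intervals without double counting $q_e(\dot\gamma)_-$.

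I would first try taking $t(s)$ to be the largest such time; it exists by compactness, since $q_e(\gamma(t)-\gamma(s))\le 0$ is a closed condition in $t$. Then the union of the intervals $[s,t(s)]$, $s \in B_\gamma$, splits into connected components $I_k$. On each component I would select a chain $s_0<s_1<\dots$ with $s_{i+1} = t(s_i)$ or a point just beyond it, and use the maximality of $t(\cdot)$ to show the chained intervals are essentially disjoint and exhaust $B_\gamma\cap I_k$. Summing the interval inequalities then gives the bound on each $I_k$. If chaining fails, a fallback is the scalar Lemma~\ref{lemma:rising-sum} applied to $\xi(t) := \int_0^t q_e(\dot\gamma)$. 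By the Jensen bound above, any $s$ with $\xi(t)>\xi(s)$ for all $t>s$ satisfies $q_e(\gamma(t)-\gamma(s)) \ge \xi(t)-\xi(s) > 0$ for all $t>s$, so $s\notin B_\gamma$; hence the bad set is contained in the complement of the rising-sun set $E$. Lemma~\ref{lemma:rising-sum} then yields $\int_{B_\gamma}(\xi')_+\le\int(\xi')_-$, which is the desired estimate.
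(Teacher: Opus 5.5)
Your proposal is correct and is essentially the paper's proof. The paper also disintegrates $\cR^e$ along trajectories of the divergence-free field $Z$; it uses arc length with weight $1/|Z|$ where you use flow time, which is equivalent. It gets the second inequality from the weighted part of Lemma~\ref{lemma:lipschitz-curve} with $h=\alpha-\alpha_f$, and it proves the first inequality exactly by your ``fallback'': the superadditivity bound $\xi(t)-\xi(s)\le q_e(\gamma(t)-\gamma(s))$ places the rising-sun set $E$ inside the good set, so Lemma~\ref{lemma:rising-sum} applies directly and the chaining construction is unnecessary (note also that only the first inequality is multiplied by $\alpha_f$, since the second already carries the weight $\alpha-\alpha_f$).
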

\begin{proof}
    We parametrize each integral curve $\gamma : [0,\tau] \to \cR^e$ of $S$ (hence, also of $Z$) by arc-length in the orientation of $Z$ and set $g(t) := q_e (\dot{\gamma}(t))$.
    We will apply the rising sun Lemma~\ref{lemma:rising-sum} to the function $\xi(t) := \int_0^t g(s) \, ds$.
    In the notation of the Lemma, $s \in E$ implies that $\gamma(s) \in G^e$ above, as
    \[
    0 < \xi(t) - \xi(s) \leq \la \gamma(t), e \rg - \la \gamma(s), e \rg - | \gamma(t)_e^{\perp} - \gamma(s)^{\perp}_e |,
    \]
    so $\gamma(t) - \gamma(s) \in C^e$ and $\gamma(s) \in G^e$.
    Therefore, the portion of the level curve outside $G^e$ is contained in $[0,\tau] \setminus E$.
    Therefore, Lemma~\ref{lemma:rising-sum} gives
    \[
    \int_{\gamma \setminus G^e} g_+ \leq \int_{[0,\tau] \setminus E} (\xi')_+ \leq \int_0^{\tau} (\xi')_- = \int_{\gamma} g_-.
    \]
    Because the function $\alpha_f$ is positive and constant along each trajectory, this also implies that 
    \begin{equation}\label{eqn:alpha-f-g-plus}
        \int_{\gamma \setminus G^e} \alpha_f g_+ \leq \int_{\gamma} \alpha_f g_-
    \end{equation}
    The vector field $Z$ is divergence-free and the region $\cR^e$ is foliated by integral curves of $Z$, so a change of variables along characteristics produces the disintegration
    \begin{equation}\label{eqn:cancel-factor-Z}
    \int_{\cR^e} H(x) \, dx = \int_{\Sigma} \int_{\gamma_q} \frac{H}{|Z|} \, d \cH^1 \, d \nu(q), \qquad H \in L^1(\cR^e),
    \end{equation}
    for every non-negative Borel function $H \in L^1(\cR^e)$.
    Here, $\Sigma$ denotes an entrance cross-section for integral curves along $\cR^e$, $d \nu = |\la Z, n \rg| \, d \cH^{m-1}$, and the transverse measure flux is preserved through cross-sections due to $\textup{div} \, Z = 0$.
    Recalling that $(q_e(S_f))_{\pm} = \alpha_f |Z| \, (q_e(\dot{\gamma}))_{\pm} = \alpha_f |Z| \, g_{\pm}$ by the definition of $S_f$, we may  therefore cancel the factor $|Z|$ in~\eqref{eqn:cancel-factor-Z} and integrate the inequality~\eqref{eqn:alpha-f-g-plus} along trajectories.
    This proves the first claimed inequality.
    
    Regarding the second inequality, we apply the weighted integral bound in the second part of Lemma~\ref{lemma:lipschitz-curve} to the function $g(t) := q_e( \dot{\gamma}(t))$ with $h := \alpha_d = (\alpha - \alpha_f) \in W^{1,1}([0,\tau])$, which is non-negative and satisfies $\min_{\gamma} \alpha_d = 0$ by the definition of $\alpha_f = \min_{\gamma} \alpha$.
    This implies that
    \begin{equation}\label{eqn:int-tau-alpha-d}
        \int_0^{\tau} \alpha_d g_+ \leq \int_0^{\tau} |\dot{\alpha}_d| + \int_0^{\tau} \alpha_d g_- \, .
    \end{equation}
    Since $\dot{\gamma} = \frac{Z}{|Z|}$ and $\textup{div} \, Z=0$, while $\alpha_f$ is constant along trajectories, we compute that
    \[
    \dot{\alpha}_d = \Bigl\langle \frac{Z}{|Z|}, \nabla \alpha \Bigr\rangle, \qquad
    |\dot{\alpha}| = \frac{|\la Z, \nabla \alpha_d \rg|}{|Z|} = \frac{|\textup{div} \, S|}{|Z|}
    \]
    due to $\textup{div} \, S = \la Z, \nabla \alpha \rg$.
    Thus, we can multiply the inequality~\eqref{eqn:int-tau-alpha-d} by the coarea measure and use the disintegration formula~\eqref{eqn:cancel-factor-Z} along the flow of $Z$ to see that the three terms become, respectively, 
    \[
    \int_{\cR^e} (q_e(S_d))_+ \leq \int_{\cR^e} |\textup{div} \, S| + \int_{\cR^e} (q_e(S_d))_-.
    \]
    This completes the proof of our result.
\end{proof}

We now specialize the above bounds to the case of $\bR^2$.
When $e_x = (1,0)$, we call $\cR^x := \cR^{(1,0)}$ a vertical flow box; when $e_y = (0,1)$, we call $\cR^y := \cR^{(0,1)}$ a horizontal flow box.
We also write 
\[
q_x(S_f) := q_{(1,0)}(S_f), \quad q_y(T_f) := q_{(0,1)} (T_f), \quad G^x := G^{(1,0)}, \quad G^y := G^{(0,1)}.
\]
\begin{lemma}\label{lemma:good-good-interaction}
    Let $\cR^x, \cR^y \subset \bR^2$ be respectively a vertical flow box for $S$ and a horizontal flow box for $T$, with $S_f, T_f$ and $G^x, G^y$ as above, with $G := G^x \cap G^y$.
    Then, for every $\chi \in L^2(\bR^2)$, we have
    \[
    \int_G |\chi| \, \sqrt{ q_x(S_f)_+ \, q_y (T_f)_+ } \leq \| \chi \|_{L^2} \left[ \int_{\cR^x} ( |S| + |\textup{div} \, S|) \right]^{\frac{1}{2}} \left[ \int_{\cR^y} ( |T| + |\textup{div} \, T|) \right]^{\frac{1}{2}}.
    \]
\end{lemma}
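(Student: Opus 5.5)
The plan is to apply Cauchy--Schwarz first and then prove a bilinear bound for the product of the two good densities:
\[
\int_G |\chi| \sqrt{q_x(S_f)_+ \, q_y(T_f)_+} \leq \|\chi\|_{L^2} \Bigl( \int_G q_x(S_f)_+ \, q_y(T_f)_+ \Bigr)^{\frac12}.
\]
It then suffices to show
\[
\int_G q_x(S_f)_+ \, q_y(T_f)_+ \leq \int_{\cR^x} |S| \cdot \int_{\cR^y} |T|.
\]
The divergence terms in the statement are then only a harmless enlargement of the right-hand side. This is the Kakeya-type mechanism: a vertical and a horizontal family of curves with cone-controlled slopes can meet at most once.

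\textbf{Step 1: injectivity.} Let $p, p' \in G$ lie on the same trajectory $\gamma$ of $Z_S$ (the divergence-free field from Lemma~\ref{lemma:flow-box} for $S$ on $\cR^x$). Suppose they also lie on the same trajectory $\tilde\gamma$ of $Z_T$ in $\cR^y$. Since $p, p' \in G^x$, the vector $p'-p$ lies in $C^x$ or in $-C^x$, depending on which point comes later along $\gamma$. Since $p, p' \in G^y$, it also lies in $\pm C^y$. The open $45^\circ$ cones $\pm C^x$ and $\pm C^y$ are pairwise disjoint, so $p = p'$. Hence the map sending a point $x \in G$ to the pair (trajectory of $Z_S$ through $x$, trajectory of $Z_T$ through $x$) is injective.

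\textbf{Step 2: change of variables.} Parametrize $\cR^x$ by the entrance section $\Sigma^x$ with flux measure $d\nu_S = |\la Z_S, n\rg| \, d\cH^1$, and similarly $\cR^y$ by $\Sigma^y$ with $\nu_T$. This is the disintegration~\eqref{eqn:cancel-factor-Z}, which relies on $\textup{div}\, Z_S = \textup{div}\, Z_T = 0$. Near a transversal intersection point, the map $(q,r) \mapsto \gamma_q \cap \tilde\gamma_r$ has Jacobian satisfying
\[
dx = |\det(Z_S, Z_T)|^{-1} \, d\nu_S(q) \, d\nu_T(r).
\]
This is the two-dimensional flow-box coordinate computation, and I expect getting it cleanly, together with the injectivity, to be the main technical point. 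Once it holds, Step 1 gives
\[
\int_G F \, dx \leq \iint_{\Sigma^x \times \Sigma^y} \frac{F}{|\det(Z_S, Z_T)|} \, d\nu_S \, d\nu_T
\]
for any non-negative $F$.

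\textbf{Step 3: pointwise inequality.} For $v \in C^x$ and $w \in C^y$ we have $v_x \geq |v_y|$ and $w_y \geq |w_x|$. Then
\[
\det(v,w) \geq v_x w_y - |v_y||w_x| \geq (v_x - |v_y|)(w_y - |w_x|) = q_x(v) \, q_y(w),
\]
because the difference of the last two expressions equals $v_x|w_x| + |v_y| w_y - 2|v_y||w_x| \geq 0$. On $G$ we have $S_f = \alpha_f Z_S$ and $T_f = \beta_f Z_T$ (writing $\beta_f$ for the analogue of $\alpha_f$ for $T$), with $\alpha_f, \beta_f$ constant along trajectories. Applying the inequality to $v = Z_S$, $w = Z_T$ gives
\[
q_x(S_f)_+ \, q_y(T_f)_+ \leq \alpha_f \, \beta_f \, |\det(Z_S, Z_T)|.
\]
Feeding this into Step 2 yields
\[
\int_G q_x(S_f)_+ \, q_y(T_f)_+ \leq \Bigl( \int_{\Sigma^x} \alpha_f \, d\nu_S \Bigr) \Bigl( \int_{\Sigma^y} \beta_f \, d\nu_T \Bigr).
\]

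\textbf{Step 4: bounding each factor.} Each trajectory of $Z_S$ crosses the unit-width strip $\cU^x_j$, so it has length at least $1$. Using~\eqref{eqn:cancel-factor-Z} with $H = |S_f| = \alpha_f |Z_S|$, we get
\[
\int_{\Sigma^x} \alpha_f \, d\nu_S \leq \int_{\Sigma^x} \int_{\gamma_q} \alpha_f \, d\cH^1 \, d\nu_S = \int_{\cR^x} |S_f| \leq \int_{\cR^x} |S|,
\]
where the last step uses $\alpha_f \leq \alpha$. The same argument bounds the $T$ factor by $\int_{\cR^y} |T|$. Combining Steps 2--4 with the initial Cauchy--Schwarz inequality proves the lemma.
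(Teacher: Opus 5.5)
Your proof is correct and follows essentially the paper's route. It uses injectivity of the pair of trajectory labels on $G$ via the disjoint double cones $\pm C^x$, $\pm C^y$, then the area formula for that pair, then the bound $\det(v,w)\geq q_x(v)\,q_y(w)$. Your flux coordinates on $\Sigma^x,\Sigma^y$ are stream functions of $Z_S,Z_T$; the paper instead uses stream functions of $S_f,T_f$, so that $\textup{osc}\,\varphi=\int\alpha_f\,d\nu_S$. Your Step 2 should be read as $\int_G g\,|\det(Z_S,Z_T)|\,dx\le\iint g\,d\nu_S\,d\nu_T$ for $g\ge 0$, not as a bound for arbitrary $F$, since points where the determinant vanishes are invisible on the right; but that weaker form is all you use.

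The one real variation is Step 4. You bound $\int_{\Sigma^x}\alpha_f\,d\nu_S\le\int_{\cR^x}|S_f|$ because every crossing trajectory has length at least $1$. The paper instead passes to $\int_{\partial_L\cR^x}\la S,e_x\rg$ and averages the divergence theorem over vertical slices, which is where its $|\textup{div}\,S|$ term comes from. Also, since $q_x(v)_+\,q_y(w)_+\le|\det(v,w)|$ holds for all $v,w$ (the left side vanishes unless $v\in C^x$ and $w\in C^y$), your use of $|\det|$ spares you the paper's check that $S_f\in\overline{C^x}$ and $T_f\in\overline{C^y}$ on $G$. You should say this explicitly when you apply the inequality to $Z_S,Z_T$.
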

\begin{proof}
The proof is similar to that of Theorem 3.1 in~\cite{anisotropic-michael-simon}*{\S 3.4}, but we estimate terms more carefully to obtain the sharp bound by $\| \chi \|_{L^2}$ instead of a large cumulative constant $C$.
Since the vector fields $S_f, T_f$ are divergence-free and the flow boxes $\cR^x, \cR^y$ are simply connected, we can find functions $\varphi \in C^1(\cR^x) , \psi \in C^1(\cR^y)$ such that $S_f = \nabla^{\perp} \varphi$ and $T_f = \nabla^{\perp} \psi$, where $\nabla^{\perp} = ( - \partial_y, \partial_x)$.
Each trajectory of $S_f$ is contained in a level set of $\varphi$, and $\la S_f , e_x \rg = - \partial_y \varphi > 0$ on the left side of $\cR^x$, so $\varphi$ is strictly monotone there.
Since $\la S_f, e_x \rg \geq 0$ on the entrance side, $\varphi$ is monotone there.

We recall the expressions $S = \alpha Z$ and $T=\beta W$.
Because any two distinct trajectories where $\alpha_f > 0$ have distinct $\varphi$ values for the set $G_+ := G\cap\{\alpha_f\beta_f>0\}$, equality of $\varphi$ (resp.~$\psi$) is equivalent to points lying on the same $S$-trajectory (resp.~$T$-trajectory).
The integrand vanishes on $G\setminus G_+$, so it suffices to apply the area formula on $G_+$.
Since every trajectory meets the left side exactly once, each level set of $\varphi$ in $\cR^x$ is a single $S$-trajectory.
Likewise, level sets of $\psi$ in $\cR^y$ are $T$-trajectories.

We also observe that the map $\Phi = (\varphi, \psi)$ is injective on $G = G^x \cap G^y$.
Indeed, if $\Phi(p) = \Phi(\tilde{p})$, then $p,\tilde{p}$ lie on the same $S$-trajectory and on the same $T$-trajectory.
If $p \neq \tilde{p}$, the definition of $G^x$ gives $\tilde{p}-p \in C^x$ or $p-\tilde{p} \in C^x$, while the definition of $G^y$ gives $\tilde{p}-p \in C^y$ or $p-\tilde{p} \in C^y$.
Thus,
\[
|p_y - \tilde{p}_y| < |p_x - \tilde{p}_x|, \qquad |p_x - \tilde{p}_x| < |p_y - \tilde{p}_y|,  
\]
must hold simultaneously; this is impossible.
Thus, the map $\Phi = (\varphi, \psi)$ is injective.

For $v \in \overline{C^x}$ and $w \in \overline{C^y}$, we write $v = (v_x,v_y)$ and $w = (w_x, w_y)$ and observe that
\begin{equation}\label{eqn:det(v,w)}
    \det (v,w) = v_x w_y - v_y w_x \geq v_x w_y - |v_y| \, |w_x| \geq (v_x - |v_y|) \, (w_y - |w_x|) = q_x(v) \, q_y(w).
\end{equation}
At every point $p \in G = G^x \cap G^y$, we have $S_f(p) \in \overline{C^x}$ and $T_f(p) \in \overline{C^y}$.
Indeed, if $p \in G^x$, then $\gamma_p(t) - p \in C^x$ for every sufficiently small $t>0$, so
\[
| \la \gamma(t), e_y \rg - \la p, e_y \rg | < \la \gamma(t), e_x \rg - \la p, e_x \rg.
\]
Dividing by $t>0$ and sending $t \downarrow 0$ shows that $| \la \dot{\gamma}(0), e_y \rg | \leq \la \dot{\gamma}(0), e_x \rg$, hence $\dot{\gamma}(0) \in \overline{C^x} = \{ (v_x, v_y) : |v_y| \leq v_x \}$.
Since $S_f$ is a positive multiple of the oriented tangent $\dot{\gamma}$, it satisfies $S_f(p) \in \overline{C^x}$.
The proof for $T_f(p) \in \overline{C^y} = \{ (w_x, w_y) : |w_x| \leq w_y \}$ is analogous.
Therefore,~\eqref{eqn:det(v,w)} implies that
\begin{equation}\label{eqn:qx-Sf}
\begin{split}
&q_x(S_f)_+ \, q_y(T_f)_+ \leq \det (S_f, T_f) \quad \text{on } \; G_+ \\
& \implies \int_G  q_x(S_f)_+ \, q_y(T_f)_+ = \int_{G_+}  q_x(S_f)_+ \, q_y(T_f)_+ \leq \int_{G_+} \det (S_f, T_f) 
\end{split}
\end{equation}
because $q_x(S_f)_+ q_y(T_f)_+ = 0$ on $G \setminus G_+$ due to $\alpha_f \beta_f=0$ there.
Moreover, $\det D \Phi = \det (S_f, T_f) \geq 0$ on $G$.
By the injectivity of $\Phi$ and the area formula, we bound
\begin{equation}\label{eqn:injectivity-and-area}
    \int_{G_+} \det (S_f, T_f) = \cL^2 ( \Phi(G)) \leq \textup{osc}_{\cR^x} \varphi \, \textup{osc}_{\cR^y} \psi.
\end{equation}
Because $\varphi$ is constant along the trajectories of $S$, its range on $\cR^x$ equals its range on the left entrance boundary $\partial_L \cR^x$.
Writing $S_f = \alpha_f Z$ and $S = \alpha Z$, with $0 < \alpha_f \leq \alpha$, while $\la Z, e_x \rg > 0$ on the entrance side, we can estimate $\textup{osc}_{\cR^x} \varphi$ as
\begin{equation}\label{eqn:osc-varphi-psi}
    \begin{split}
        \textup{osc}_{\cR^x} \varphi &= \int_{\partial_L \cR^x} |\partial_y \varphi| \, d \cH^1 = \int_{\partial_L \cR^x} \la S_f, e_x \rg \, d \cH^1 \leq \int_{\partial_L \cR^x} \la S, e_x \rg \, d \cH^1, \\
        \textup{osc}_{\cR^y} \psi &= \int_{\partial_B \cR^y} |\partial_x \psi| \, d \cH^1 = \int_{\partial_B \cR^y} \la T_f, e_y \rg \, d \cH^1\leq \int_{\partial_B \cR^y} \la T, e_y \rg \, d \cH^1.
    \end{split}
\end{equation}
The second bound here is obtained by the analogous argument, where $\partial_B \cR^y$ denotes the bottom entering boundary of the horizontal flow box $\cR^y$.
To estimate the entrance flux in the first case, for a.e.~$s \in [j,j+1]$ we can apply the divergence theorem $\cR^x \cap \{ j < x < s\}$ and use the fact that the lateral boundary is tangent to $S$.
This gives
\begin{equation}\label{eqn:entrance-flux-term}
    \begin{split}
    & \int_{\partial_L \cR^x} \la S, e_x \rg = \int_{ \cR^x \cap \{ x=s\} } \la S, e_x \rg - \int_{ \cR^x \cap \{ j < x < s \} } \textup{div} \, S \\
    & \implies \int_{\partial_L \cR^x} \la S, e_x \rg \leq \int_{\cR^x} ( |\la S, e_x \rg| + |\textup{div} \, S|) \leq \int_{\cR^x} ( |S| + |\textup{div} \, S|)
    \end{split}
\end{equation}
where the implication follows from averaging over $s$ over the unit intervals $[j, j+1]$.
The estimate for the bottom flux term $\int_{\partial_B \cR^y} \la T, e_y \rg \, d \cH^1$ is analogous.
Combining the bounds~\eqref{eqn:qx-Sf} - \eqref{eqn:entrance-flux-term} and using the Cauchy-Schwarz inequality, we therefore obtain
\allowdisplaybreaks{
\begin{align*}
    \int_G |\chi| \, \sqrt{q_x(S_f)_+ \, q_y(T_f)_+} &\leq \| \chi \|_{L^2} \Bigl( \int_G q_x(S_f)_+ \, q_y(T_f)_+ \Bigr)^{\frac{1}{2}} \leq \| \chi \|_{L^2} \left( \int_G \det (S_f, T_f) \right)^{\frac{1}{2}} \\
    &\leq \| \chi\|_{L^2} \left( \int_{\partial_L \cR^x} \la S, e_x \rg \right)^{\frac{1}{2}} \left( \int_{\partial_B \cR^y} \la T, e_y \rg \right)^{\frac{1}{2}} \\
    &\leq \| \chi \|_{L^2} \left( \int_{\cR^x} ( |S| + |\textup{div} \, S|) \right)^{\frac{1}{2}} \left( \int_{\cR^y} ( |T| + |\textup{div} \, T|) \right)^{\frac{1}{2}} .
\end{align*}}
This completes the proof.
\end{proof}

\subsection{Bounds outside flow boxes}

Next, we examine the decomposition $S_{\tau} = \eta_{\tau} U_{\tau}$ and $T_{\tau} = \zeta_{\tau} V_{\tau}$ surrounding~\eqref{eqn:S-T-convergence} more carefully.
After a further arbitrarily small constant rotation of $(S,U)$ together, and of $(T,V)$ together, we can apply Sard's theorem to the countably many integer coordinate lines to assume that $\pm e_y$ are regular values of $U|_{ \{ j \} \times \bR }$ for every $j \in \bZ$, and $\pm e_x$ are regular values of $V|_{\bR \times \{ k \} }$ for every $k \in \bZ$.
Only finitely many preimages occur because the fields are constant outside a large disk.
After adding finitely many corresponding preimages to $F$, every trajectory of $U$ meets the vertical integer lines transversely away from $F$, and every trajectory of $V$ meets the horizontal integer lines transversely away from $F$.
We fix a vertical strip $\cU^x_j$.
Up to the finitely many trajectories meeting $F$, every maximal connected piece of a $U$-trajectory in $\cU^x_j$ is a curve exhibiting one of the following behaviors:
\begin{enumerate}[(i)]
    \item crossing from the left side to the right side;
    \item crossing from the right side to the left side, or entering and exiting through the same side;
    \item an injective complete curve of infinite length; or
    \item a loop.
\end{enumerate}
Let $I_j$ be the set of initial points on the left side for which the integral curves are of type $(i)$, namely, curves that enter through the left side and exit through the right side.
The set $I_j$ is open, so $I_j = \bigcup_{\ell} I_{j, \ell}$ is a union of disjoint open intervals $I_{j,\ell}$.
We denote by $\cR^x_{j,\ell}$ the union of the trajectories issued from $\{ j \} \times I_{j,\ell}$, so each $\cR^x_{j,\ell}$ is a vertical flow box for $U$.
We set
\[
\cR^x_j := \bigcup_{\ell} \cR^x_{j,\ell}, \qquad \cR^x := \bigcup_{j \in \bZ} \cR^x_j
\]
so $\cR^x$ is the union of all left-to-right crossing trajectories of $U$.
Given an expression $T = \zeta V$ for $T$, we can define horizontal strips $\cR^y_{k,m}$ analogously, with $\cR^y = \bigcup_{k,m} \cR^y_{k,m}$.

We now use the Smirnov decomposition~\ref{thm:smirnov} to obtain the following estimate.
\begin{proposition}\label{prop:union-of-boxes}
In the setting above, let $U \in C^{\infty} ( \bR^2 \setminus F; \bS^1)$ be a global unit direction field and let $\eta \geq 0$ be a smooth, compactly supported function satisfying the properties of Lemma~\ref{lemma:S-vector-field}, with $S = \eta U \in C_c^{\infty}(\bR^2;\bR^2)$ and $U = e_x$ outside a compact set.
    Then,
    \[
    \int_{\bR^2 \setminus \cR^x(U)} q_x(S)_+ \leq \int_{\bR^2} |\textup{div} \, S| + \int_{\bR^2 \setminus \cR^x(U)} q_x(S)_- \, 
    \]
where $\cR^x(U)$ denotes the union of the left-to-right crossing flow boxes of $U$.
\end{proposition}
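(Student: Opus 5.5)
We apply Smirnov's decomposition (Theorem~\ref{thm:smirnov}) to the normal $1$-current associated with $J := S\,\mathbf{1}_{\bR^2\setminus\cR^x}$, restricted strip by strip, and then use Lemma~\ref{lemma:lipschitz-curve} on each curve.

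\textbf{Localization to strips.} Since $q_x$ is positively one-homogeneous, we have $q_x(S)_\pm = \eta\, q_x(U)_\pm$. We fix a strip $\cU^x_j$ and work on the complement $\cU^x_j\setminus\cR^x_j$. Up to the finitely many trajectories meeting $F$, which form an $\cL^2$-null set, this complement is a union of trajectory pieces of types (ii)--(iv).

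The natural current to decompose is $J_j := S\,\mathbf{1}_{\cU^x_j\setminus \cR^x_j}$. Its boundary consists of two parts. The interior part is $\textup{div}\,S$ restricted to the region. The boundary part is the flux through the vertical lines $\{x=j\}$ and $\{x=j+1\}$; the lateral boundaries of the flow boxes carry no flux, since they are foliated by trajectories.

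\textbf{Handling the boundary flux.} We avoid Smirnov's abstract decomposition where possible and instead decompose along the actual trajectories of $U$. On the complement, $S=\eta U$, and the level structure is given by the curves themselves. Along a trajectory $\gamma$ parametrized by arc length, $S$ equals $\eta(\gamma(t))\dot\gamma(t)$.

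For curves of type (ii), the $x$-coordinate does not increase from start to end. For loops, it returns to its starting value. For complete curves of infinite length, we argue by truncation and approximation, or else invoke Theorem~\ref{thm:smirnov}. In all these cases the hypothesis $\la\gamma(b),e\rg\le\la\gamma(a),e\rg$ of Lemma~\ref{lemma:lipschitz-curve} holds for the unweighted curve.

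\textbf{Weighted estimate along trajectories.} We apply the weighted third inequality of Lemma~\ref{lemma:lipschitz-curve} with $h=\eta\circ\gamma-\min_\gamma\eta$ and $g=q_x(\dot\gamma)$. The constant part $\min_\gamma \eta$ is handled by the unweighted closed/backward bound, which has no additive constant. This gives
\[
\int \eta\, g_+ \le \int |\tfrac{d}{dt}(\eta\circ\gamma)| + \int \eta\, g_-.
\]
For type (ii) curves entering and exiting through the same side, we use the monotone-endpoint version; the key point is that the endpoint term $(\la\gamma(b)-\gamma(a),e\rg)_+$ vanishes.

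\textbf{Integrating over trajectories.} As in Proposition~\ref{prop:generalized-flow-boxes}, we pass from curves to area integrals via Lemma~\ref{lemma:flow-box}. We write $S=\alpha Z$ with $\textup{div}\,Z=0$, and replace $\eta$ by $\alpha$ along trajectories; the quantity $\alpha$ is the density relative to the divergence-free transport. Then $|\dot\alpha|=|\textup{div}\,S|/|Z|$, so integrating with the flux measure yields the claimed estimate on each strip.

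\textbf{Main obstacle.} Lemma~\ref{lemma:flow-box} only applies to genuine flow boxes, not to loops or to infinite curves that accumulate. Here we expect to need Smirnov's decomposition. Each curve of $J_j$'s decomposition is either a cycle or an arc whose endpoints charge $|\partial J_j|$. The arc endpoints on the vertical lines $\{x=j\}$ and $\{x=j+1\}$ are the dangerous ones, since they could contribute $+1$ each in Lemma~\ref{lemma:lipschitz-curve}.

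To rule this out, we plan to show that the arcs through boundary flux are pieces of type (ii) trajectories. These start on the right or end on the left, or enter and exit through the same side, and so have non-positive net $x$-displacement. The remaining endpoints lie in the interior, and their total $\lambda$-measure is bounded by $|\textup{div}\,S|$ via $\lambda(\Sigma_\Omega)\le\mathbf{M}(\partial J;\Omega)$. We then apply the first inequality of Lemma~\ref{lemma:lipschitz-curve}, paying at most $1$ per interior endpoint, to obtain the $\int|\textup{div}\,S|$ term. Since the pieces are disjoint across strips, summing over $j$ completes the argument.
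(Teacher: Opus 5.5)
Your final argument is essentially the paper's proof. It consists of Smirnov's decomposition of $J_j=\mathbf{1}_{\cU^x_j\setminus\cR^x_j}S$ on each strip and Lemma~\ref{lemma:lipschitz-curve} applied to every arc and cycle. Arcs with an endpoint in the open strip $(j,j+1)\times\bR$ are charged to $\int|\textup{div}\,S|$ through the absence of boundary cancellation, and the remaining arcs are full type (ii) segments. So the trajectory-wise weighted estimate via Lemma~\ref{lemma:flow-box} is an unnecessary detour. The paper also replaces $S$ by $X_\delta=(\eta+\delta\vartheta^2)U$ so that the polar direction is $U$ on the whole region. Working with $S$ directly is fine as well, since $\lambda$-a.e. arc spends only $\cH^1$-null length in $\{\eta=0\}$ and is therefore still an integral curve of $U$.

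One correction to your dichotomy: split arcs by whether they have an endpoint in the open strip, not by whether they touch the vertical lines. An arc entering through $\{x=j\}$ and stopping inside can have positive $x$-displacement, but it is paid for by its interior endpoint. Left-to-right arcs cannot occur, because type (i) trajectories carry no mass of $J_j$.
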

\begin{proof}
    It suffices to prove the estimate separately on each vertical strip $\cU^x_j = [j,j+1] \times \bR$.
    Up to the finitely many exceptional trajectories meeting $F$, the set $\cB_j := \cU^x_j \setminus \bigcup_{\ell} \cR^x_{j,\ell}$ is foliated by the trajectories of types $(ii)$-$(iv)$ described above.
    Let $\vartheta : \bR^2 \to [0,1]$ be a smooth function that vanishes precisely on the set $F$ to sufficiently high order and equals $1$ away from a larger compact set containing a larger neighborhood of $F$ and the compact set where $\{ U \neq e_x \}$.
    Because $U = e_x$ outside a compact set, the region $\cB_j$ is bounded: for $|y|$ sufficiently large, the whole horizontal segment $[j,j+1] \times \{y \}$ lies in the region where $U = e_x$, so every $U$-trajectory in $\cU^x_j$ is a horizontal trajectory crossing left-to-right, and therefore belongs to one of the removed flow boxes $\cR^x_{j,\ell}$.
    
    We now define, for $\delta>0$, 
    \[
    X_{\delta} := S + \delta \vartheta^2 U = ( \eta + \delta \vartheta^2) U, \qquad J_{\delta,j} := \mathbf{1}_{\cB_j} X_{\delta} \, \cL^2
    \]
    as a vector-valued measure.
    Since the region $\cB_j$ is bounded, we have
    \begin{equation}\label{eqn:X-delta}
    X_{\delta} = S + \delta \vartheta^2 U \to S \quad \text{in } \; L^1(\cB_j), \qquad \implies \qquad \int_{\cB_j} q_x(X_{\delta})_{\pm} \to \int_{\cB_j} q_x(S)_{\pm}
    \end{equation}
    because the functions $q_x(-)_{\pm}$ are Lipschitz.
    Moreover, using the explicit form of $U$ near $F$, we have
    \[
    \textup{div} \, X_{\delta} = \textup{div} \, S + \delta \, \textup{div} ( \vartheta^2 U), \qquad \text{and} \qquad \textup{div} ( \vartheta^2 U) \in L^1_{\textup{loc}}(\bR^2).
    \]
    In fact, the function $\textup{div} ( \vartheta^2 U)$ is integrable on $\Omega_j = (j, j+1) \times \bR$: near $F$, this follows from our choice of $\vartheta$, while outside a larger compact set, we have $\vartheta=1$ and $U=e_x$.
    Therefore,
    \begin{equation}\label{eqn:Omega-j-div-X-delta}
    \limsup_{\delta \downarrow 0} \int_{\Omega_j} |\textup{div} \, X_{\delta}| \leq \int_{\Omega_j} |\textup{div} \, S|.
    \end{equation}
    Since $\eta + \delta \vartheta^2>0$ on $\bR^2 \setminus F$, the polar direction of $J_{\delta,j}$ agrees with $U$ at $|J_{\delta,j}|$-a.e.~point.
    The portions $\partial \cR^x_{j,\ell}$ of the boundaries of the removed flow boxes $\cR^x_{j,\ell}$ lying in $\Omega_j$ are integral curves of $U$, so they carry no normal flux because $X_{\delta}$ is parallel to $U$.
    Exhausting $\cB_j$ by the complements of finitely many such flow boxes and passing to the limit, we obtain
    \begin{equation}\label{eqn:justify-jjdelta-normal}
        |\textup{div} \, J_{\delta,j}|(\Omega_j) \leq \int_{\Omega_j} |\textup{div} \, X_{\delta}|.
    \end{equation}
    Since $\cB_j$ is bounded and $X_{\delta}$ is bounded, the flux across the two vertical sides is finite.
    Hence, $J_{\delta,j}$ has finite distributional divergence and defines a normal $1$-current with $\mathbf{M}( \partial J_{\delta,j}) = |\textup{div} \, J_{\delta,j}|(\bR^2)$.
    Inside $\Omega_j := (j,j+1) \times \bR$, we can bound the mass $\mathbf{M} ( \partial J_{\delta,j}; \Omega_j) = |\textup{div} \, J_{\delta,j}|(\Omega_j)$  by~\eqref{eqn:justify-jjdelta-normal}.
    Applying Theorem~\ref{thm:smirnov} to $J_{\delta,j}$, we can decompose
    \[
    J_{\delta, j} = \int_{\Sigma} [\![ \gamma_{\sigma} ]\!] \, d \lambda(\sigma) + \int_{\Xi} \Gamma_{\xi} \, d \mu(\xi)
    \]
    where the first family consists of injective finite-length arcs and the second family consists of cycles, with no cancellation of mass and no boundary cancellation of the contribution from the arcs.
    Since the polar direction of $J_{\delta,j}$ agrees with $U$, the oriented tangent to $\gamma_{\sigma}$ agrees with $U$ for $\lambda$-a.e.~$\sigma$ and $\cH^1$-a.e.~point of $\gamma_{\sigma}$; the same holds for the cycle part.

    Let $\tilde{\Sigma}_j$ denote the set of $\sigma \in \Sigma$ for which $\gamma_{\sigma}$ has an endpoint in the open strip $\Omega_j = (j,j+1) \times \bR$.
    Since there is no boundary cancellation in Theorem~\ref{thm:smirnov}, we can use the~\eqref{eqn:justify-jjdelta-normal} bound 
    \[
    \lambda(\tilde{\Sigma}_j) \leq \mathbf{M} ( \partial J_{\delta,j} ; \Omega_j) \leq \int_{\Omega_j} |\textup{div} \, X_{\delta}|.
    \]
    Thus, it remains to estimate the contribution of each decomposing curve.
    For every oriented curve $\gamma : [a,b] \to \cU^x_j$, parametrized by arc-length, we define
    \[
    H(\gamma) := \int_a^b [ q_x(\dot{\gamma})_+ - q_x(\dot{\gamma})_-] \, dt = \int_a^b q_x(\dot{\gamma}) \, dt.
    \]
    We claim that, for $\lambda$-a.e.~$\sigma$ and for $\mu$-a.e.~$\xi$, it holds that
    \begin{equation}\label{eqn:curve-bounds}
    H(\gamma_{\sigma}) \leq \mathbf{1}_{\tilde{\Sigma}_j}(\sigma), \qquad H(\Gamma_{\xi}) \leq 0.
    \end{equation}
    Indeed, every cycle $\Gamma_{\xi}$ is a countable union of finite-length loops.
    Applying the second estimate of Lemma~\ref{lemma:lipschitz-curve} to each loop gives $H(\Gamma_{\xi}) \leq 0$, proving the second claim.
    
    Next, suppose that $\sigma \not\in \tilde{\Sigma}_j$, so neither endpoint of $\gamma_{\sigma}$ lies in the interior $\Omega_j$.
    Since $\gamma_{\sigma}$ is tangent to $U$, it is a finite portion of one of the $U$-trajectories contained in $\cB_j$.
    Moreover, because neither endpoint lies in $\Omega_j$, this $\gamma_{\sigma}$ forms a full trajectory segment in the strip, with its finite endpoints lying on the boundary of the strip.
    By the definition of the set $\cB_j$, this curve cannot be of type $(i)$; it also cannot be of type $(iii)$, since $\gamma_{\sigma}$ has finite length, and it cannot be of type $(iv)$, since $\gamma_{\sigma}$ is an injective arc rather than a cycle.
    Hence, $\gamma_{\sigma}$ is of type $(ii)$ and carries the orientation induced by $U$; this implies that $\la \gamma_{\sigma}(b), e_x \rg \leq \la \gamma_{\sigma}(a), e_x \rg$.
    Hence, the second bound of Lemma~\ref{lemma:lipschitz-curve} again shows that $H(\gamma_{\sigma}) \leq 0$.
    Finally, if $\sigma \in \tilde{\Sigma}_j$, then the fact that $\gamma_{\sigma}([a,b]) \subset \cU^x_j$ combined with the first bound of Lemma~\ref{lemma:lipschitz-curve} implies that $H(\gamma_{\sigma}) \leq 1$.
    This completes the proof of~\eqref{eqn:curve-bounds}.

    We now use the fact that the Smirnov decomposition of Theorem~\ref{thm:smirnov} has no mass cancellation.
    Since the functions $v \mapsto q_x(v)_{\pm}$ are positively one-homogeneous, we obtain
    \begin{align}
        \int_{\cB_j} [ q_x(X_{\delta})_+ - q_x(X_{\delta})_-] &= \int_{\Sigma} H(\gamma_{\sigma}) \, d \lambda(\sigma) + \int_{\Xi} H(\Gamma_{\xi}) \, d \mu(\xi) \notag \\
        &\leq \lambda(\tilde{\Sigma}_j) \leq \int_{\Omega_j} |\textup{div} \, X_{\delta}|, \notag \\
        \implies \int_{\cB_j} q_x (X_{\delta})_+ &\leq \int_{\cB_j} q_x(X_{\delta})_- + \int_{\Omega_j} |\textup{div} \, X_{\delta}|.\label{eqn:final-smirnov-step}
    \end{align}
    Combining the bounds~\eqref{eqn:X-delta} and~\eqref{eqn:Omega-j-div-X-delta}, we can pass to the limit in~\eqref{eqn:final-smirnov-step} to obtain
    \[
    \int_{\cB_j} q_x(S)_+ \leq \int_{\cB_j} q_x(S)_- + \int_{\Omega_j} |\textup{div} \, S|.
    \]
    Finally, summing this inequality over all $j \in \bZ$, where the strip interiors $\Omega_j$ are pairwise disjoint, we can write $\bigcup_{j \in \bZ} \cB_j = \bR^2 \setminus \cR^x(U)$ up to the vertical integer lines and the finitely many exceptional trajectories, all of which are $\cL^2$-negligible.
    This completes the proof of our assertion.
\end{proof}

We now combine the above results to prove Theorem~\ref{thm:kakeya-type-inequality}.
\begin{proof}[Proof of Theorem~\ref{thm:kakeya-type-inequality}]
We first suppose that $S,T$ are compactly supported and that $\chi \in C_c(\bR^2)$.
Let $\rho_{\ve}$ be a standard compactly supported mollifier and convolve the joint $\bR^4$-valued measure $(S,T)$ to obtain $(S_{\ve}, T_{\ve}) := (\rho_{\ve} \ast S, \rho_{\ve} \ast T)$.
Then, $\textup{div} \, S_{\ve} = \rho_{\ve} \ast (\textup{div} \, S)$ and $\textup{div}\, T_{\ve} = \rho_{\ve} \ast ( \textup{div} \, T)$, while Lemma~\ref{lemma:strict-approximation} produces the strict convergence
\[
(S_{\ve}, T_{\ve}) \to (S,T), \qquad\textup{div}\, S_{\ve} \to \textup{div} \, S, \qquad \textup{div} \, T_{\ve} \to \textup{div} \, T
\]
as $\ve \downarrow 0$.
We now fix $\ve>0$, so $S_{\ve}, T_{\ve}$ are smooth and compactly supported.
We apply Lemma~\ref{lemma:S-vector-field} separately to $S_{\ve}, T_{\ve}$, prescribing the exterior directions $e_x, e_y$ respectively.
For every sufficiently small $\tau>0$, this gives $S_{\ve,\tau} = \eta_{\ve, \tau} U_{\ve,\tau}$ and $T_{\ve,\tau} = \zeta_{\ve, \tau} V_{\ve,\tau}$, with
\[
S_{\ve,\tau} \to S_{\ve}, \qquad T_{\ve,\tau} \to T_{\ve}, \qquad \textup{div} \, S_{\ve,\tau} \to \textup{div} \, S_{\ve}, \qquad \textup{div} \, T_{\ve,\tau} \to \textup{div} \, T_{\ve}
\]
in $L^1(\bR^2)$, as $\tau \downarrow 0$.
We now fix $\tau$ and suppress $\ve,\tau$ to write $S=\eta U$ and $T = \zeta V$.
After the arbitrarily small generic perturbation described above, we may form the vertical crossing flow boxes $\cR^x = \bigcup \cR^x_j$ of $U$ and the horizontal crossing flow boxes $\cR^y = \bigcup_k \cR^y_k$ of $V$.
Within either family the boxes are pairwise disjoint up to their boundaries.

On each vertical box $\cR^x_j$, we apply the construction of Lemma~\ref{lemma:flow-box} to the non-vanishing unit field $U$ and write $U = \hat{\alpha} Z$ with $\textup{div} \, Z=0$ and $S = (\eta \hat{\alpha}) Z$.
Taking the minimum of the non-negative coefficient $\alpha := \eta \hat{\alpha}$ along each trajectory gives the decomposition $S = S_{f,j} + S_{d,j}$, as well as an associated good set $G^x_j$.
Likewise, on every horizontal box $\cR^y_k$, we decompose $T = T_{f,k} + T_{d,k}$ with good set $G^y_k$.
Because $S_{f,j}$ and $S_{d,j}$ are non-negative multiples of the same oriented vector field, the positive one-homogeneity of the functions $q_e(-)_{\pm}$ gives
\begin{equation}\label{eqn:qx-decomposition}
q_x(S)_+ = q_x(S_{f,j})_+ + q_x(S_{d,j})_+, \qquad q_x(S)_- = q_x(S_{f,j})_- + q_x(S_{d,j})_-
\end{equation}
and likewise for $T$.
On each region $\cR^x_j \cap \cR^y_k$, we have
\begin{equation}\label{eqn:qx-S-qy-T}
\min \{ q_x(S)_+, q_y(T)_+\} \leq \min \{ q_x(S_{f,j})_+ , q_y(T_{f,k})_+ \} + q_x (S_{d,j})_+ + q_y (T_{d,k})_+.
\end{equation}
We first estimate the minimum term on $G := \bigcup_{j,k} (G^x_j \cap G^y_k)$.
Using $\min \{ a,b \} \leq \sqrt{ab}$ for $a,b \geq 0$, we can apply Lemma~\ref{lemma:good-good-interaction} to each intersecting pair of boxes to obtain
\begin{align*}
    \int_{G^x_j \cap G^y_k} \chi \, \min \{ q_x (S_{f,j})_+ , q_y (T_{f,k})_+ \} \leq \| \chi \|_{L^2(G^x_j \cap G^y_k)} \Biggl[ \Bigl( \int_{\cR^x_j} |S| + |\textup{div} \, S| \Bigr) \Bigl( \int_{\cR^y_k} |T| + |\textup{div} \, T | \Bigr) \Biggr]^{\frac{1}{2}}
\end{align*}
on each $G^x_j \cap G^y_k$.
These good sets are pairwise disjoint up to null sets.
Writing $A_j := \int_{\cR^x_j} ( |S| + |\textup{div} \, S|)$ and $B_k := \int_{\cR^y_k} ( |T| + |\textup{div} \, T|)$, we may therefore apply the Cauchy-Schwarz inequality and sum over pair indices $(j,k)$ to obtain the bound
\begin{align*}
    \int_G \chi \, \min \{ q_x(S_f)_+ , q_y(T_f)_+ \} &\leq \sum_{j,k} \int_{G^x_j \cap G^y_k} \chi \min \{ q_x(S_{f,j})_+ , q_y(T_{f,k})_+ \} \\
    &\leq \sum_{j,k } \| \chi \|_{L^2(G^x_j \cap G^y_k)} (A_j B_k)^{\frac{1}{2}} \leq \Bigl( \sum_{j,k} \| \chi\|^2_{L^2(G^x_j \cap G^y_k} \Bigr)^{\frac{1}{2}} \Bigl( \sum_{j,k} A_j B_k \Bigr)^{\frac{1}{2}} \\
    &\leq \| \chi \|_{L^2(G)} \Bigl( \sum_j A_j \Bigr)^{\frac{1}{2}} \Bigl( \sum_k B_k \Bigr)^{\frac{1}{2}}.
\end{align*}
Since $\cR^x = \bigcup_j \cR^x_j$ and $\cR^y = \bigcup_k \cR^y_k$, we therefore obtain
\allowdisplaybreaks{
\begin{align*}
    &\sum_j A_j \leq \int_{\cR^x} |S| + |\textup{div} \, S|, \qquad \sum_k B_k \leq \int_{\cR^y} |T| + |\textup{div} \, T|, \\
    & \implies \int_G \chi \, \min \{ q_x(S_f)_+, q_y(T_f)_+ \} \leq \| \chi \|_{L^2} \Bigl( \int_{\cR^x} |S| + |\textup{div} \, S| \Bigr)^{\frac{1}{2}} \Bigl( \int_{\cR^y} |T| + |\textup{div} \, T | \Bigr)^{\frac{1}{2}}.
\end{align*}}
Next, we set $\cR := \cR^x \cap \cR^y$ and consider $\cR \setminus G$.
At every point of this set, at least one of the two conditions for the good sets $G^x$ and $G^y$ fails.
Thus, 
\[
\min \{ q_x(S_f)_+ , q_y(T_f)_+\} \leq \mathbf{1}_{\cR^x \setminus G^x} \, q_x(S_f)_+ + \mathbf{1}_{\cR^y \setminus G^y} \, q_y(T_f)_+.
\]
Using the bound $\chi \leq \| \chi \|_{L^{\infty}}$ and summing the first estimate in Proposition~\ref{prop:generalized-flow-boxes} over all vertical and horizontal boxes gives, for $\cR := \cR^x \cap \cR^y$,
\[
\int_{\cR \setminus G} \chi \, \min \{ q_x(S_f)_+ , q_y(T_f)_+ \} \leq \| \chi \|_{L^{\infty}} \Bigl[ \int_{\cR^x} q_x(S_f)_- + \int_{\cR^y} q_y(T_f)_- \Bigr].
\]
For the residual pieces, the second estimate of Proposition~\ref{prop:generalized-flow-boxes} gives
\begin{align*}
    \int_{\cR} \chi \, q_x(S_d)_+ &\leq \| \chi \|_{L^{\infty}} \int_{\cR^x} q_x(S_d)_+ \leq \| \chi \|_{L^{\infty}}  \int_{\cR^x} ( q_x(S_d)_- + |\textup{div}\, S|), \\
    \int_{\cR} \chi \, q_y(T_d)_+ &\leq \| \chi \|_{L^{\infty}} \int_{\cR^y} q_y(T_d)_+ \leq \| \chi \|_{L^{\infty}} \int_{\cR^y} ( q_y(T_d)_- + |\textup{div} \, T|).
\end{align*}
Combining these bounds with the decomposition~\eqref{eqn:qx-decomposition} on $\cR^x$ and its analogue for $T$ on $\cR^y$ gives 
\begin{align*}
    \int_{\cR} \chi \, \min \{ q_x(S)_+, q_y(T)_+ \} &\leq \| \chi \|_{L^2} \Bigl( \int |S| + |\textup{div} \, S| \Bigr)^{\frac{1}{2}} \Bigl( \int |T| + |\textup{div} \, T| \Bigr)^{\frac{1}{2}} \\
    & \quad + \| \chi \|_{L^{\infty}} \int_{\cR^x} (q_x(S)_- + |\textup{div} \, S|) + \| \chi \|_{L^{\infty}} \int_{\cR^y} ( q_y(T)_- + |\textup{div} \, T|).
\end{align*}
Next, we bound the contribution of the common crossing region $\bR^2 \setminus \cR = (\bR^2 \setminus \cR^x) \cup (\bR^2 \setminus \cR^y)$ as
\allowdisplaybreaks{
\begin{align*}
\int_{\bR^2 \setminus \cR} \chi \, \min \{ q_x(S)_+, q_y(T)_+ \} &\leq \| \chi \|_{L^{\infty}} \Bigl[ \int_{\bR^2 \setminus \cR^x} q_x(S)_+ + \int_{\bR^2 \setminus \cR^y} q_y(T)_+ \Bigr] \\
&\leq \| \chi\|_{L^{\infty}} \int_{\bR^2} ( |\textup{div} \, S| + |\textup{div} \, T| ) \\
& \quad+ \| \chi \|_{L^{\infty}} \Bigl[ \int_{\bR^2 \setminus \cR^x} q_x(S)_- + \int_{\bR^2 \setminus \cR^y} q_y(T)_- \Bigr].
\end{align*}}
In the second step, we bounded $\int_{\bR^2 \setminus \cR^x} q_x(S)_+$ using Proposition~\ref{prop:union-of-boxes}, and argued likewise for $\int_{\bR^2 \setminus \cR^y} q_y(T)_+$.
We then add the terms
\[
\int_{\cR^x} q_x(S)_- + \int_{\bR^2 \setminus \cR^x} q_x(S)_- = \int_{\bR^2} q_x(S)_-, \qquad \int_{\cR^y} q_y(T)_- + \int_{\bR^2 \setminus \cR^y} q_y(T)_- = \int_{\bR^2} q_y(T)_-
\]
Combining these terms proves the inequality for the approximating vector fields $S_{\ve,\tau}, T_{\ve,\tau}$, uniformly in $\tau$.
We now send $\tau \downarrow 0$.
Because the map $(v,w) \mapsto \{ \min q_x(v)_+, q_y(w)_+ \}$ is Lipschitz and one-homogeneous on $\bR^4$, the $L^1$ convergence of $S_{\ve, \tau}, T_{\ve, \tau}$ implies
\[
\min \{ q_x(S_{\ve,\tau})_+, q_y(T_{\ve,\tau})_+ \} \to \min \{ q_x(S_{\ve})_+, q_y(T_{\ve})_+ \}
\]
in $L^1$, and likewise for $q_x(S_{\ve,\tau})_-, q_y(T_{\ve,\tau})_-$, as well as their divergences, which converge in $L^1$ and in total mass.
We may therefore pass to the limit and obtain the desired estimate for the smooth fields $S_{\ve}, T_{\ve}$.
In the next step, we send $\ve \downarrow 0$, so the joint measures converge $(S_{\ve}, T_{\ve}) \to (S,T)$.
For the positively one-homogeneous functions on $\bR^4$,
\[
F(v,w) := \min \{ q_x(v)_+, q_y(w)_+ \}, \qquad F_x(v,w) := q_x(v)_-, \qquad F_y(v,w) := q_y(w)_-,
\]
Reshetnyak's continuity theorem~\ref{lemma:reshetnyak-continuity}, applied to the integrand $(x,v,w) \mapsto \chi(x) F(v,w)$, shows that
\[
\int \chi \, \min \{ q_x(S_{\ve})_+, q_y(T_{\ve})_+ \} \to \int \chi \min \{ q_{x}(S)_+, q_y(T)_+ \}, \qquad \int q_x(S_{\ve})_- \to \int q_x(S)_-,
\]
and $\int q_y(T_{\ve})_- \to \int q_y(T)_-$.
Moreover, strict convergence gives
\[
|S_{\ve}|(\bR^2) \to |S|(\bR^2), \qquad |T_{\ve}|(\bR^2) \to |T|(\bR^2), \qquad |\textup{div} \, S_{\ve}|(\bR^2) \to |\textup{div} \, S|(\bR^2)
\]
and $|\textup{div} \, T_{\ve}|(\bR^2) \to |\textup{div} \, T|(\bR^2)$.
Thus, the desired inequality again passes to the limit and proves the theorem for compactly supported vector-valued measures $S,T$ and continuous $\chi$.

Finally, when $S,T$ are arbitrary, we choose cutoffs $\eta_R$ with $0 \leq \eta_R \leq 1$ and $\eta_R \equiv 1$ on $B_R$, $\eta_R \equiv 0$ on $B_{2R}$, and $|\nabla \eta_R| \leq CR^{-1}$, where $R$ is sufficiently large so that $\textup{spt} \, \chi \subset B_R$.
For sufficiently large $R$, setting $(S^R, T^R) := (\eta_R S, \eta_R T)$ leaves the left-hand side unchanged, while
\[
\textup{div} \, S^R = \eta_R \, \textup{div} \, S + \la \nabla \eta_R , S \rg \quad \implies \quad \limsup_{R \to \infty} |\textup{div} \, S^R|(\bR^2) \leq |\textup{div} \, S|(\bR^2)
\]
and likewise for $T$, because $|\la \nabla \eta_R, S \rg|(\bR^2) \leq C R^{-1} |S|(\bR^2) \to 0$.
Moreover, the expressions $q_x(-)_{\pm}, q_y(-)_{\pm}$ converge, by positive homogeneity, using the dominated convergence theorem with respect to the corresponding variation measures $|S|,|T|$.
We may therefore send $R \to \infty$ to obtain the result for arbitrary finite $S,T$ with continuous $\chi$.

Finally, let $\chi$ be non-negative, bounded, compactly supported, and Borel, and consider the positive Radon measure $\lambda := \min \{ q_x(S)_+, q_y(T)_+ \}$.
We fix a compact set $K$ containing $\textup{spt} \, \chi$ and approximate $\chi$ by a sequence of $\chi_j \in C_c(\bR^2)$ with $0 \leq \chi_j \leq \| \chi \|_{L^{\infty}}$ such that $\chi_j \to \chi$ in $L^1( (\cL^2 + \lambda) \mres K)$.
Then, $\int \chi_j \, d \lambda \to \int \chi \, d \lambda$, so the $L^1(\cL^2)$ convergence implies
\[
\| \chi_j - \chi \|^2_{L^2} \leq 2 \, \| \chi \|_{L^{\infty}} \| \chi_j - \chi\|_{L^1} \to 0, \qquad \implies \qquad \| \chi_j \|_{L^2} \to \| \chi \|_{L^2}
\]
due to the uniform $L^{\infty}$ bound.
Moreover, $\| \chi_j \|_{L^{\infty}} \leq \| \chi \|_{L^{\infty}}$.
We may therefore apply the established estimate for each $\chi_j$ and send $j \to \infty$; this completes the proof.
\end{proof}

\section{Proof of the main theorem}\label{section:proof-surfaces}

We now use Theorem~\ref{thm:kakeya-type-inequality} to control the projected varifold measures.

\begin{proposition}\label{prop:A-div-S}
    Consider a vector-valued Radon measure $\mathbf{A} \in \cM(\bR^2 ; \bR^{2 \times 2})$ with measure-valued row-wise divergence and concentrated on a Borel set $E \subset \bR^2$ of finite Lebesgue measure.
    Then,
    \[
    \Lambda_{2,\beta} ( \mathbf{A}) \leq C |E|^{\frac{1}{2}} ( |\mathbf{A}| + |\textup{Div} \, \mathbf{A}| )  + C \, |\textup{Div} \, \mathbf{A}| + \cN_2 ( \mathbf{A}) + 2 \, \cK_2( \mathbf{A})
    \]
    with the constant $\beta = \frac{4-\pi}{8}$.
\end{proposition}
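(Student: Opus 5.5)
We plan to reduce Proposition~\ref{prop:A-div-S} to Theorem~\ref{thm:kakeya-type-inequality} by averaging over rotations. We use Lemma~\ref{lemma:signed-average} pointwise on the density $a := \frac{d\mathbf{A}}{d\sigma}$ and integrate against $\sigma$, for any $\sigma$ dominating $\mathbf{A}$. By one-homogeneity this gives
\[
\tfrac{4}{\pi}\Lambda_{2,\beta}(\mathbf{A}) \leq \frac{1}{2\pi}\int_0^{2\pi} \Psi_2\bigl(R_\theta^t \mathbf{A} R_\theta\bigr)\,d\theta + \cN_2(\mathbf{A}) + 2\,\cK_2(\mathbf{A}).
\]
Since $\frac{4}{\pi}>1$, and assuming the positive part is what matters (when $\Lambda_{2,\beta}(\mathbf{A}) \le 0$ there is nothing to prove, as the right-hand side is non-negative), it suffices to bound $\Psi_2(R_\theta^t\mathbf{A}R_\theta)$ uniformly in $\theta$ by $C|E|^{1/2}(|\mathbf{A}|+|\textup{Div}\,\mathbf{A}|) + C|\textup{Div}\,\mathbf{A}|$. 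The constants in front of $\cN_2$ and $\cK_2$ then survive as $\frac{\pi}{4} < 1$ times the stated ones. The key point is that the sharp coefficient $\|\chi\|_{L^\infty}=1$ on the negative parts in Theorem~\ref{thm:kakeya-type-inequality} is absorbed exactly by the negative parts inside $\psi_2$.

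Fix $\theta$ and write $\mathbf{A}_\theta := R_\theta^t \mathbf{A} R_\theta$. This is the matrix measure obtained by rotating the domain and conjugating, which is an isometric change of frame. Hence $|\mathbf{A}_\theta|$ and $|\textup{Div}\,\mathbf{A}_\theta|$ agree with those of $\mathbf{A}$ up to a harmless dimensional factor from the norm conventions, and $\mathbf{A}_\theta$ is concentrated on a rotated copy $E_\theta$ with $|E_\theta| = |E|$. Let $S$ be the first row of $\mathbf{A}_\theta$ and $T$ its second row, both $\bR^2$-valued measures, with $\textup{div}\, S$ and $\textup{div}\, T$ the components of $\textup{Div}\,\mathbf{A}_\theta$. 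Then, at the level of densities, $q_x(S) = a_{11}-|a_{12}|$ and $q_y(T) = a_{22}-|a_{21}|$. By the identity in Lemma~\ref{lemma:2x2-matrix},
\[
\Psi_2(\mathbf{A}_\theta) = \int \min\{q_x(S)_+,q_y(T)_+\} - \int \bigl(q_x(S)_- + q_y(T)_-\bigr).
\]

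Now apply Theorem~\ref{thm:kakeya-type-inequality} with $\chi = \mathbf{1}_{E_\theta\cap B_R}$, so that $\|\chi\|_{L^2}\le |E|^{1/2}$ and $\|\chi\|_{L^\infty}\le 1$. Since $S$ and $T$ are concentrated on $E_\theta$, letting $R\to\infty$ by monotone convergence gives
\[
\int\min\{q_x(S)_+,q_y(T)_+\} \le |E|^{1/2}(|S|+|\textup{div}\, S|)^{1/2}(|T|+|\textup{div}\, T|)^{1/2} + 2(|\textup{div}\, S|+|\textup{div}\, T|) + \int\bigl(q_x(S)_-+q_y(T)_-\bigr).
\]
The last term cancels exactly against the negative part of $\Psi_2(\mathbf{A}_\theta)$. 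Bounding both square-root factors by $|\mathbf{A}|+|\textup{Div}\,\mathbf{A}|$ gives the first term, and the divergence terms are controlled by $C|\textup{Div}\,\mathbf{A}|$. Averaging over $\theta$ concludes the argument.

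The main technical care is in the bookkeeping at the measure level rather than for densities. One must check that the Radon–Nikodym identification of $q_x(S)_\pm$ with the entries of $\psi_2$ is consistent when a common dominating measure is used for $S$ and $T$; this holds since all functions involved are one-homogeneous. One must also check that the rotated measure indeed has row-wise divergence equal to the rotated $\textup{Div}\,\mathbf{A}$, which follows from the chain rule for the test fields $X\circ R_\theta$. Neither step should be a serious obstacle. The substantive content, namely the sharp $L^\infty$ coefficient, is already supplied by Theorem~\ref{thm:kakeya-type-inequality}.
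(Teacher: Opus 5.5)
Your proposal is correct and follows essentially the same route as the paper: the identity of Lemma~\ref{lemma:2x2-matrix} for $\Psi_2$, Theorem~\ref{thm:kakeya-type-inequality} with $\chi=\mathbf{1}_{E\cap B_R}$ (so that the sharp $\|\chi\|_{L^\infty}$ coefficient cancels the negative parts), rotation invariance of the total variations and of $|E|$, and Lemma~\ref{lemma:signed-average} integrated against a dominating measure. The differences are only in ordering and presentation: you rotate before applying the Kakeya bound rather than after, and you spell out two steps the paper leaves implicit, namely the passage from $\frac{4}{\pi}\Lambda_{2,\beta}$ to $\Lambda_{2,\beta}$ using nonnegativity of the right-hand side, and the need to rotate the domain so that the rows keep measure-valued divergence.
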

\begin{proof}
    We let $S,T$ denote the two rows of $\mathbf{A}$, which belong to $\cM( \bR^2 ; \bR^2)$, and choose a positive measure $\sigma$ dominating all entries of $\mathbf{A}$.
    Then, Lemma~\ref{lemma:2x2-matrix} gives
    \begin{align*}
    \psi_2 \Bigl( \frac{d \mathbf{A}}{d \sigma} \Bigr) &= \min \left\{ q_x \Bigl( \frac{dS}{d \sigma} \Bigr)_+, q_y \Bigl( \frac{dT}{d \sigma} \Bigr)_+ \right\} - q_x \Bigl( \frac{dS}{d \sigma} \Bigr)_- - q_y \Bigl( \dfrac{dT}{d \sigma} \Bigr)_- \, , \\
    \implies \Psi_2( \mathbf{A}) &= \int_{\bR^2} \min \{ q_x(S)_+, q_y(T)_+ \} - \int_{\bR^2} (q_x(S)_- + q_y(T)_-) .
    \end{align*}
    Since $\mathbf{A}$ is concentrated on $E$, we can apply Theorem~\ref{thm:kakeya-type-inequality} with $\chi = \mathbf{1}_{E \cap B_R}$ for a sequence of radii, so $\textup{spt} \, \chi \subset B_R$ is compact.
    In the next step, we send $R \to \infty$ and use the dominated convergence theorem, since $\cL^2(E) < \infty$, to obtain
    \[
    \Psi_2( \mathbf{A}) \leq |E|^{\frac{1}{2}} ( |S| + |\textup{div} \, S|)^{\frac{1}{2}} ( |T| + |\textup{div} \, T|)^{\frac{1}{2}} + 2 ( |\textup{div} \, S| + |\textup{div} \, T| )
    \]
    Using the arithmetic-geometric mean inequality and the equivalence of the row and matrix norms, up to a dimensional constant, we obtain
    \begin{equation}\label{eqn:Psi2(A)}
    \Psi_2( \mathbf{A}) \leq C|E|^{\frac{1}{2}} ( |\mathbf{A}| + |\textup{Div} \, \mathbf{A}|) + C \, |\textup{Div} \, \mathbf{A}|.
    \end{equation}
    For $R \in \textup{SO}(2)$, we consider the rotated measure $\mathbf{A}^R := R^t (R^t)_{\#} \mathbf{A} R$.
    We combine the inequality~\eqref{eqn:Psi2(A)} with the bound of Lemma~\ref{lemma:signed-average}, for $\beta = \frac{4-\pi}{8}>0$, to obtain
    \begin{align*}
    \frac{4}{\pi} \Lambda_{2,\beta} ( \mathbf{A}) &\leq \int_{\textup{SO}(2)} \Psi_2( \mathbf{A}^R) \, dR + \cN_2( \mathbf{A}) + 2 \, \cK_2( \mathbf{A}) \\
    &\leq C |E|^{\frac{1}{2}} ( |\mathbf{A}| + |\textup{Div} \, \mathbf{A}|) + C \, |\textup{Div} \, \mathbf{A}| + \cN_2( \mathbf{A}) + 2 \, \cK_2( \mathbf{A})
    \end{align*}
    because the total variation and divergence measures are preserved under rotation.
\end{proof}

On the other hand, Lemma~\ref{lemma:averaged-coercivity} allows us to bound the Haar average $\int \Lambda_{m,\beta}( \mathbf{A}_P) \, dP$ by a dimensional constant whenever $\sup_{T \in \bG(N,m)} \| B_{\Psi}(T) - T \| \leq \ve$.
We now prove that this constant is positive in every ambient dimension $N$ for $m=2$.
\begin{lemma}\label{lemma:key-m=2}
    For every $N \geq 3$ and $\beta = \frac{4-\pi}{8}$, we have $a_{N,2} > \frac{4 \beta}{N}$ where $a_{N,2}$ is defined in~\eqref{eqn:d-N-m-beta}.
\end{lemma}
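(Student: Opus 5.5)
The plan is to compute $a_{N,2}$ exactly. We take $T$ to be the coordinate plane $\textup{span}\{e_1,e_2\}$. For $P\in\bG(N,2)$, the matrix $E^t_P T E_P$ is a symmetric $2\times 2$ matrix with $0\le E_P^tTE_P\le I$. Its eigenvalues are $\cos^2\theta_1\ge\cos^2\theta_2$, where $\theta_1\le\theta_2$ are the principal angles between $P$ and $T$. Hence $a_{N,2}=\mathbb{E}[\lambda_{\min}(E_P^tTE_P)]$ for Haar-random $P$.

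To realize the Haar measure, we let $P$ be the column span of an $N\times 2$ matrix $G$ with i.i.d.\ standard Gaussian entries, and take $E_P=G(G^tG)^{-1/2}$. This gives
\[
E_P^tTE_P=(G^tG)^{-1/2}\,G_1^tG_1\,(G^tG)^{-1/2},
\]
where $G_1$ is the top $2\times 2$ block and $G_2$ the remaining $(N-2)\times 2$ block. Since $G^tG=G_1^tG_1+G_2^tG_2$ is a sum of independent Wishart matrices $W_1\sim W_2(I,2)$ and $W_2\sim W_2(I,N-2)$, the matrix above is matrix-variate Beta distributed. By the classical Jacobi ensemble formula, the joint density of its eigenvalues is proportional to
\[
|\lambda_1-\lambda_2|\prod_{i=1}^2\lambda_i^{(n_1-3)/2}(1-\lambda_i)^{(n_2-3)/2},\qquad n_1=2,\; n_2=N-2.
\]
For $N\ge 3$, the first factor is $\lambda_i^{-1/2}$ and the second is $(1-\lambda_i)^{(N-5)/2}$. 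Both are integrable.

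The computation is cleanest in the variables $u_i=1-\lambda_i$, because $\lambda_{\min}=1-u_{\max}$. Writing $c:=\frac{N-5}{2}>-1$, the $u$-density is proportional to $|u_1-u_2|(u_1u_2)^c$ times a factor $((1-u_1)(1-u_2))^{-1/2}$ from the $\lambda_i^{-1/2}$ term. This extra factor spoils a naive power computation, so I first re-check the exponent convention. In the standard convention the density is proportional to $|\lambda_1-\lambda_2|\prod_i\lambda_i^{a}(1-\lambda_i)^{b}$ with $a=\frac{n_1-p-1}{2}$ and $b=\frac{n_2-p-1}{2}$, where $p=2$. Here $n_1=2$ gives $a=-\tfrac12$ and $n_2=N-2$ gives $b=\frac{N-5}{2}$.

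This leads to two routes, and I plan to try both in parallel.

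\emph{Route 1 (direct integration).} Integrate directly using Beta integrals over the ordered region $\{\lambda_1>\lambda_2\}$. One computes the normalization $Z$ and then the first moment of the smaller eigenvalue. The inner integral $\int_0^{\lambda_1}(\lambda_1-\lambda_2)\,w(\lambda_2)\,d\lambda_2$ becomes an incomplete Beta function, so it is not a pure power.

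\emph{Route 2 (identity plus a bound).} This is the route I would try first. The identity $\lambda_{\min}=\tfrac12\textup{tr}-\tfrac12|\lambda_1-\lambda_2|$ and the trace average $\mathbb{E}\,\textup{tr}(E_P^tTE_P)=\frac4N$ from Lemma~\ref{lemma:averaged-coercivity} reduce the claim to
\[
\mathbb{E}\,|\lambda_1-\lambda_2|<\frac{\pi}{N}.
\]
The left side is the second moment-type quantity $\int|\lambda_1-\lambda_2|^2\prod_i w(\lambda_i)/Z$. For a density of the form $|\lambda_1-\lambda_2|\prod_i w(\lambda_i)$, this is a symmetric polynomial integral. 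It therefore reduces to Beta integrals $B(\cdot,\cdot)$ with $w(\lambda)=\lambda^{-1/2}(1-\lambda)^{(N-5)/2}$, and is computable in closed form as a ratio of Gamma functions.

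From the Route 2 closed form, I will compare the resulting expression with $\pi/N$ for all $N\ge3$. I expect it to behave like $\sim\kappa/N$ with $\kappa<\pi$; a heuristic check with independent $\chi^2$ approximations suggests the answer is comfortably below. I would verify the cases $N=3,4$ explicitly. For general $N$, I would use monotonicity of Gamma ratios, for instance Gautschi's inequality, to conclude.

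The main obstacle is getting the Jacobi exponents right and evaluating the $|\lambda_1-\lambda_2|$ integral cleanly. If that becomes unwieldy, the fallback is a cruder lower bound that suffices. Since the target $\frac{4-\pi}{2N}\approx\frac{0.43}{N}$ is small, it is enough to show $\mathbb{E}\lambda_{\min}\ge\frac{c}{N}$ with $c>0.43$. One way to get this is to lower bound $\lambda_{\min}$ by $\det/\textup{tr}$ and estimate $\mathbb{E}\det=\mathbb{E}\cos^2\theta_1\cos^2\theta_2$ through the Gaussian model.
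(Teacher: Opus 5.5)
Your Route 2 works for $N\ge 4$. It shares the paper's first reduction: $\lambda_{\min}=\frac12\textup{tr}-\frac12|\lambda_1-\lambda_2|$ together with $\mathbb{E}\,\textup{tr}=\frac4N$ leaves you to show $\mathbb{E}|\lambda_1-\lambda_2|<\frac{\pi}{N}$. You then evaluate that expectation differently.

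The paper stays elementary. It averages over rotations inside $P$ to get $\mathbb{E}[\lambda_1-\lambda_2]=\frac{\pi}{2}\mathbb{E}|\langle Tu,u\rangle-\langle Tv,v\rangle|$, passes to the frame $x=\frac{u+v}{\sqrt2}$, $y=\frac{u-v}{\sqrt2}$, conditions on $x$, and integrates against the law of $s=\langle Tx,x\rangle$.

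You use the Jacobi ensemble instead. Your observation that the Vandermonde factor turns the numerator into the polynomial integral $2(M_0M_2-M_1^2)$, with $M_k=B(k+\frac12,\frac{N-3}{2})$, is right. However, the normalization $Z$ is not a Beta integral: it is a Selberg integral, $Z=\frac{8}{(N-2)(N-3)}$. With it,
\[
\mathbb{E}|\lambda_1-\lambda_2|=\frac{(N-3)^2M_0^2}{2N(N-2)}=\frac{\pi}{N}\cdot\frac{\Gamma(\frac{N-1}{2})^2}{\Gamma(\frac{N-2}{2})\,\Gamma(\frac{N}{2})},
\]
which is exactly the paper's expression. Strict log-convexity of $\Gamma$ then closes the argument, via either your Gautschi suggestion or the paper's Cauchy--Schwarz. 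The paper's route costs a few spherical integrals but handles all $N\ge3$ uniformly. Yours is quicker given the ensemble formulas, but it needs the Selberg normalization and breaks down at $N=3$.

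Two corrections. First, at $N=3$ you have $c=-1$, not $c>-1$, and $(1-\lambda)^{-1}$ is not integrable. Here $n_2=1<p=2$; geometrically, any two $2$-planes in $\bR^3$ share a line, so $\lambda_{\max}\equiv1$ and there is no joint density on $[0,1]^2$. Do this case by hand: $\lambda_{\min}=\langle n_P,n_T\rangle^2$ has mean $\frac13>\frac{4-\pi}{6}$.

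Second, and more importantly, your heuristic that the answer is ``comfortably below'' $\frac{\pi}{N}$ is false. The Gamma ratio tends to $1$, so $N\,\mathbb{E}|\lambda_1-\lambda_2|\to\pi$ and $N a_{N,2}\to\frac{4-\pi}{2}$. The lemma is therefore asymptotically sharp: the margin $a_{N,2}-\frac{4\beta}{N}$ is of order $N^{-2}$, while both sides are of order $N^{-1}$. Hence your fallback cannot succeed. Since $\det/\textup{tr}=\lambda_{\min}\cdot\frac{\lambda_{\max}}{\lambda_1+\lambda_2}$, it discards a non-vanishing fraction of $\lambda_{\min}$ in the Wishart limit $N E_P^tTE_P\to W_2(I,2)$. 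So $N\,\mathbb{E}[\det/\textup{tr}]$ converges to a constant strictly below $\frac{4-\pi}{2}$. The exact evaluation and the strict Gamma inequality are genuinely needed.
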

\begin{proof}
    Let $T \in \bG(N,2)$ and let $(u,v)$ be a Haar-random orthonormal basis of a 2-frame in $\bR^N$, so $P := u \otimes u + v\otimes v$ is a Haar-random element of $\bG(N,2)$.
    For a fixed $T \in \bG(N,2)$, we consider its projection to $P$ computed by $A := \begin{pmatrix}
        \la Tu, u\rg & \la Tu, v\rg \\
        \la Tv, u\rg & \la Tv, v\rg \\
    \end{pmatrix}$ with eigenvalues $\lambda_1 \geq \lambda_2$, so $a_{N,2} = \bE[\lambda_2]$.
    Since the Haar measure is $O(N)$-invariant for all $R \in O(N)$, we know that $\mr{tr}(P) = 2$, so
    \begin{equation}\label{eqn:El1+l2}
    \bE[\lambda_1 + \lambda_2] = \bE[\mr{tr}(TP)] = \mr{tr}(T\tfrac{2I}{N}) = \frac{4}{N}.
    \end{equation}
    Recalling the general property of symmetric matrices used in Lemma~\ref{lemma:symmetric-positive-definite-average}, we have
    \[
    (R_\theta^tAR_\theta)_{11} - (R_\theta^tAR_\theta)_{22} = (\lambda_1-\lambda_2)\cos(2\theta-\theta_0).
    \]
    Because $\frac{1}{2\pi}\int_0^{2\pi}|\cos(2\theta-\theta_0)|\,d\theta= \frac{2}{\pi}$, this implies that
    \[
    \lambda_1-\lambda_2 = \frac{1}{4}\int_0^{2\pi} | (R_\theta^tAR_\theta)_{11} - (R_\theta^tAR_\theta)_{22} |\,d\theta.
    \]
    Integrating this over all $A$ shows that 
    \begin{equation}\label{eqn:El1-l2pi/2}
    \bE[\lambda_1 - \lambda_2] =\frac{\pi}{2}\bE[|\la Tu,u\rg - \la Tv, v\rg|].
    \end{equation}
    For $(x,y) := \frac{1}{\sqrt{2}}(u+v, u-v)$, the pair $(x,y)$ is equally distributed and $\la Tu,u\rg - \la Tv,v\rg = 2\la Tx,y\rg$.
    For any fixed $x$, the complementary variable $y$ is equally distributed on $x^\perp \cap \bS^{N-1} \cong \bS^{N-2}$.
    For $w \in \bS^{N-2} \subset x^\perp$, we have $\bE[|\la w, y\rg| | x] = c_N |w|$ by rotational invariance.
    Writing $\omega_{k} := \cH^{k}(\bS^{k}) = \frac{2\pi^{\frac{k+1}{2}}}{\Gamma(\frac{k+1}{2})}$ the surface area, we can compute
    \[
    c_N = \frac{1}{\omega_{N-2}}\int_{z \in \bS^{N-2}}|z_1|\,d\cH^{N-2}(z) = \frac{2\omega_{N-3}}{\omega_{N-2}}\int_0^1 t(1-t^2)^{\frac{N-4}{2}}\,dt  = \frac{\Gamma(\frac{N-1}{2})}{\sqrt{\pi}\Gamma(\frac{N}{2})}
    \]
    integrating over the two copies of spheres $\bS^{N-3}$ cut out by $|z_1| = t$ which have radius $\sqrt{1-t^2}$.

    Let $s := \la Tx, x\rg$ and since $x \perp y$.
    We have $\la Tx, y\rg =\la Tx - sx, y\rg$ so $|Tx - sx|^2 =|Tx|^2 - s^2 = s(1-s)$ and therefore $\bE[|\la Tx, y \rg|\, | x] = c_N\sqrt{s(1-s)}$.
    The distribution for $s$ is given by $\frac{N-2}{2}(1-s)^\frac{N-4}{2} \mathbf{1}_{(0,1)}$.
    Indeed, for $x = (\cos(\varphi)\omega, \sin(\varphi)\eta)$ with $(\omega, \eta , \varphi) \in \bS^1 \times \bS^{N-3} \times (0, \frac{\pi}{2})$, we have $s = \cos^2(\varphi)$.
    In spherical coordinates, we can write
    \[
    d\cH^{N-1} = \cos(\varphi)(\sin(\varphi))^{N-3}\,d\varphi d\cH^{1}(\omega)d\cH^{N-3}(\eta).
    \]
    Therefore, $ds = -2\cos(\varphi)\sin(\varphi)\,d\varphi$ and the induced density on $s$ is proportional to $(1-s)^{\frac{N-4}{2}}$, and normalizing this to a probability measure gives the claimed distribution for $s$ above.

    Now, we can compute
    \[
    \bE [|\la Tu,u\rg - \la Tv,v\rg|] = (N-2)c_N \int_0^1 \sqrt{s}(1-s)^{\frac{N-3}{2}}\,ds  = \frac{\Gamma(\frac{N-1}{2})^2}{\Gamma(\frac{N-2}{2})\Gamma(\frac{N+2}{2})}
    \]
    and the Cauchy-Schwarz inequality shows 
    \[
\Gamma\Bigl(\frac{N-1}{2}\Bigr)^2 < \Gamma\Bigl(\frac{N-2}{2}\Bigr)\Gamma\Bigl(\frac{N}{2}\Bigr) \qquad \implies \qquad \bE[|\la Tu,u\rg - \la Tv,v\rg|] < \frac{\Gamma\bigl(\frac{N}{2}\bigr)}{\Gamma\bigl(\frac{N+2}{2}\bigr)} = \frac{2}{N}.
    \]
    This implies that $\bE[\lambda_1 - \lambda_2]< \frac{\pi}{N}$ from equation~\eqref{eqn:El1-l2pi/2}.
    Finally, combining this with~\eqref{eqn:El1+l2}, we have 
    \[
    a_{N,2} := \bE[\lambda_2] = \tfrac{1}{2}(\bE[\lambda_1+\lambda_2] - \bE[\lambda_1 - \lambda_2]) > \tfrac{1}{2}\bigl(\tfrac{4}{N} - \tfrac{\pi}{N} \bigr) = \tfrac{4-\pi}{2N} 
    \]
    as claimed.
    This proves our assertion.
\end{proof}
\begin{proof}[Proof of Theorem~\ref{thm:anisotropic-MS}]
Let $\ve := \sup_{T \in \bG(N,2)} \| B_{\Psi}(T) - T \|$.
Given a $2$-plane $P \in \bG(N,2)$, we consider the projected measure $\mathbf{A}_P$ introduced in Section~\ref{section:smallest-eigenvalue} for the varifold $V = v(M,\theta)$, which satisfies the disintegration~\eqref{eqn:density-AP-muP}.
Because compression by $E_P$ is $1$-Lipschitz, we obtain
\begin{equation}\label{eqn:projection-functionals}
\left\| \frac{d \mathbf{A}_P}{d \mu_P}(y) - Q_P(y) \right\| \leq \ve, \qquad \text{where} \quad Q_P(y) := \int_{\tilde{\pi}^{-1}_P(y)} E^t_P T E_P \, d \sigma_{P,y} .
\end{equation}
The expression $Q_P(y)$ is positive semidefinite, because each $E^t_P T E_P$ is, so the functions $A \mapsto \lambda_2(A)_-$ and $A \mapsto \| \textup{skew} \, A\|_{\textup{op}}$ vanish on it.
Using~\eqref{eqn:projection-functionals} and integrating with respect to $\mu_P$ and using $\mu_P (\bR^2) = \|V\|(\bR^N)$, we obtain, for every $P \in \bG(N,2)$,
\[
\cN_2( \mathbf{A}_P) \leq \ve \|V\|(\bR^N) \qquad \text{and} \qquad \cK_2(\mathbf{A}_P) \leq \ve \|V\|(\bR^N).
\]
We apply Proposition~\ref{prop:A-div-S} to each of the projected measures $\mathbf{A}_P$ and use the bounds of Lemma~\ref{lemma:A-P-projection-properties}, where $\mathbf{A}_P$ is concentrated on a set $G_P$ with $\cL^2(G_P) \leq \cH^2(M)$.
This gives
\begin{equation}\label{eqn:combine-two-propositions}
    \Lambda_{2,\beta}( \mathbf{A}_P) \leq C \, \cH^2(M)^{\frac{1}{2}} \bigl[ \, \|V\|(\bR^N) + |\delta_{\Psi} V| (\bR^N) \bigr] + C \, |\delta_{\Psi} V| (\bR^N) + C \ve \|V\|(\bR^N)
\end{equation}
for $\beta = \frac{4-\pi}{8}$.
We now average over $P \in \bG(N,2)$, using Lemma~\ref{lemma:averaged-coercivity} and~\eqref{eqn:combine-two-propositions}, to obtain
\begin{align*}
    & (a_{N,2} - 4 N^{-1}\beta - (1+2\beta) \ve ) \, \|V\|(\bR^N) \\
    &\leq \int_{\bG(N,2)} \Lambda_{2,\beta}(\mathbf{A}_P) \, dP  \\
    &\leq C_1 \, \cH^2(M)^{\frac{1}{2}} \bigl[ \, \|V\|(\bR^N) + |\delta_{\Psi} V| (\bR^N) \bigr] + C_2 \, |\delta_{\Psi} V| (\bR^N) + C_3 \ve \|V\|(\bR^N).
\end{align*}
Using Lemma~\ref{lemma:key-m=2}, we have $a_{N,2} - \frac{4\beta}{N}>0$, so we can take $\ve_N = \frac{1}{2 (1+C_3+2\beta)} ( a_{N,2} - \frac{4 \beta}{N}) > 0$ to make the coefficient on the left-hand side 
\[
a_{N,2} - 4 N^{-1}\beta - (1+2\beta) \ve - C_3 \ve > \tfrac{1}{2} (1+C_3+2\beta) \ve_N > \tfrac{1}{2} \ve_N
\]
for $\ve \in (0,\ve_N)$.
Rearranging terms by $\ve_N^{-1}$ and writing $C = C(\Psi, N)$, we arrive at
\begin{equation}\label{eqn:bound-before-rescaling}
\|V \|(\bR^N) \leq C \, \cH^2(M)^{\frac{1}{2}} \, \bigl[ \|V\|(\bR^N) + |\delta_{\Psi} V|(\bR^N) \bigr] + C \, |\delta_{\Psi} V|(\bR^N).
\end{equation}
Next, we dilate $V$ by the map $x \mapsto \rho x$ and let $V_{\rho} := (x \mapsto \rho x)_{\#}V$, which satisfies
\[
\|V_{\rho} \|(\bR^N) =\rho^2 \|V\|(\bR^N), \qquad |\delta_{\Psi} V_{\rho}|(\bR^N) = \rho |\delta_{\Psi} V|(\bR^N), \qquad \cH^2(M_{\rho}) = \rho^2 \cH^2(M).
\]
We choose the scale $\rho$ to satisfy $\rho = \frac{1}{10} C^{-1} \cH^2(M)^{-\frac{1}{2}}$.
With this choice of $\rho$, we see that
\begin{align*}
    \|V_{\rho}\|(\bR^N) - C \cH^2(M_{\rho})^{\frac{1}{2}} \|V_{\rho} \|(\bR^N) &= \rho^2 \|V\|(\bR^N) \, \bigl[ 1 - C \rho \, \cH^2(M)^{\frac{1}{2}} \bigr] > \tfrac{1}{2} \rho^2 \|V\|(\bR^N).
\end{align*}
Consequently, we can apply the bound~\eqref{eqn:bound-before-rescaling} to the varifold $V_{\rho}$ and obtain
\begin{align*}
\|V\|(\bR^N) \leq \tilde{C}(N,\Psi) \, \cH^2(M)^{\frac{1}{2}}  \,|\delta_{\Psi}V|(\bR^N).
\end{align*}
We note that, under the bound $\sup_T \|B_{\Psi}(T) - T\| \leq \ve_N$, we automatically have $\sup \|B_{\Psi}(T) \| \leq 1+\ve_N$, so every dependence of the above bounds on the anisotropy is controlled uniformly in terms of $N$.
In particular, if $\Theta^2(\|V\|,-) \geq \theta_0 >0$ holds at $\|V\|$-a.e.~point, then $\cH^2(M) \leq \theta_0^{-1} \|V\|(\bR^N)$.
Using this inequality above completes the proof of the theorem.
\end{proof}

\bibliography{ref}

@article{derosa-ghiraldin,
    author = {De Philippis, G. and De Rosa, A. and Ghiraldin, F.},
    title = {{Rectifiability of varifolds with locally bounded first variation with respect to anisotropic surface energies}},
    journal = {Comm. Pure Appl. Math.},
    year = {2018},
    volume = {71},
    pages = {1123-1148},
}

@article{anisotropic-min-max,
    author = {De Philippis, G. and De Rosa, A.},
    title = {{The anisotropic min-max theory: Existence of anisotropic minimal and CMC surfaces}},
    journal = {Comm. Pure Appl. Math.},
    year = {2023},
    volume = {77},
    number = {7},
    pages = {3184-3226},
}

@article{derosa-kolasinski,
    author = {De Rosa, A. and Kolasi\'nski, S.},
    year = {2020},
    title = {{Equivalence of the Ellipticity Conditions for Geometric Variational Problems}},
    journal = {Comm. Pure Appl. Math.},
    volume = {73},
    pages = {2473-2515},
    url = {https://doi.org/10.1002/cpa.21890},
}

@article{simon-gmt,
    author = {Simon, L.},
title = {{Lectures on Geometric Measure Theory}},
year = {1984},
journal = {Proc. Centre Math. Appl.}, 
volume = {3},
}

@article{derosa-tione-regularity,
    author = {De Rosa, A. and Tione, R.},
    title = {{Regularity for graphs with bounded anisotropic mean curvature}},
    journal = {Invent. math.},
    volume = {230},
    pages = {463-507},
    year = {2022},
    url = {https://doi.org/10.1007/s00222-022-01129-6},
}

@article{allard-first-variation,
    author = {Allard, W. K.},
    title = {{On the first variation of a varifold}},
    journal = {Ann. of Math. (2)},
    volume = {95},
    year = {1972},
    pages = {417-491},
    doi = {10.2307/1970868},
}

@article{anisotropic-michael-simon,
    author = {De Philippis, G. and Pigati, A.},
    title = {{Michael-Simon inequality for anisotropic energies close to the area via multilinear Kakeya-type bounds}},
    journal = {arXiv:2601.10647},
    year = {2026},
}

@article{smirnov,
    author = {Smirnov, S.K.},
    title = {{Decomposition of solenoidal vector charges into elementary solenoids and the structure of normal one-dimensional currents}},
    journal = {St. Petersburg Math. J.},
    volume = {5},
    year = {1994},
    number = {4},
    pages = {841-867},
}

@article{reshetnyak,
    author = {Spector, D.},
    title = {{Simple proofs of some results of Reshetnyak}},
    journal = {Proc. Amer. Math. Soc.},
    volume = {139},
    pages = {1681-1690},
    year = {2011}, 
}

@book{ambrosio-fusco-pallara,
    author = {Ambrosio, L. and Fusco, N. and Pallara, D.},
    title = {{Functions of bounded variations and free discontinuity problems}},
    series = {Oxford Mathematical Monographs},
    year = {2000},
    publisher = {The Clarendon Press, Oxford University Press},
}

@inproceedings{allard-regularity,
    author = {Allard, W. K.},
    title = {{An integrality theorem and a regularity theorem for surfaces whose first variation with respect to a parametric elliptic integrand is controlled}},
    booktitle = {Geometric measure theory and the calculus of variations},
    series = {Proc. Sympos. Pure Math.},
    publisher = {American Mathematical Society},
    volume = {44},
    year = {1986},
}

@book{colding-minicozzi,
  title={{A Course in Minimal Surfaces}},
  author={Colding, T. H. and Minicozzi, W. P., II},
  series={{Graduate Studies in Mathematics}},
  volume={121},
  year={2011},
  publisher={{American Mathematical Society}},
  address={Providence, RI},
  isbn={978-0-8218-5323-8}
}

@article{michael-simon,
    author = {Michael, J.H. and Simon, L.M.},
    title = {{Sobolev and mean-value inequalities on generalised submanifolds of $\mathbb{R}^n$}},
    journal = {Comm. Pure Appl. Math.},
    volume = {26},
    year = {1973},
    pages = {361-379},
}

@article{brendle-sharp-isoperimetric,
    author = {Brendle, S.},
    title = {{The isoperimetric inequality for a minimal submanifold in Euclidean space}},
    journal = {J. Amer. Math. Soc.},
    volume = {34},
    year = {2021},
    pages = {595-603},
}

@article{brendle-eichmair,
    author = {Brendle, S. and Eichmair, M.},
    title = {{Proof of the Michael–Simon–Sobolev inequality using optimal transport}},
    journal = {J. reine angew. Math.},
    volume = {2023},
    number = {804}, 
    year = {2023},
    pages = {1-10},
}

@article{gmt-differential-inclusions,
    author = {De Lellis, C. and De Philippis, G. and Kirchheim, B. and Tione, R.},
    title = {{Geometric measure theory and differential inclusions}},
    journal = {Ann. Fac. Sci. Toulouse Math.},
    year = {2021},
    volume = {30},
    pages = {899-960},
}

@article{vanishing-mass-conjecture,
    author = {Gennaioili, L. and Rindler, F.},
    title = {{Concentration phenomena and the Vanishing Mass Conjecture}},
    journal = {arXiv:2608.20899},
    year = {2026},
}

@article{near-area-functional,
    author = {Firester, B. and Tsiamis, R.},
    title = {{The anisotropic Michael-Simon inequality}},
    journal = {Preprint},
    year = {2026},
}

\end{document}